\documentclass[12pt]{article}
\usepackage[utf8]{inputenc}
\usepackage[margin=2.3cm]{geometry}
\usepackage{amsthm}
\usepackage{enumitem}
\usepackage{amsmath}
\usepackage{amsfonts}
\usepackage{ amssymb }
\usepackage{graphicx}
\usepackage{adjustbox}
\usepackage{color}
\usepackage[UKenglish]{babel}
\usepackage{hyperref}
\usepackage{tikz}
\usepackage[title]{appendix}
\usepackage{quiver}
\usepackage{thm-restate}
\usepackage{bm}
\usepackage{centernot}
\usepackage{mathtools}
\usepackage{stmaryrd}
\usepackage{titlefoot}
\usepackage{titlesec}
\titleformat*{\section}{\centering\MakeUppercase}
\titleformat*{\subsection}{\normalsize\bfseries}
\titlespacing\section{0pt}{6pt plus 4pt minus 2pt}{0pt plus 2pt minus 2pt}
\titlespacing{\subsection}{0pt}{\parskip}{-\parskip}
\titlespacing{\subsubsection}{0pt}{\parskip}{-\parskip}

\setlist[enumerate]{topsep=0pt}

\makeatletter
\def\Mapstofill@{\arrowfill@{\Mapstochar\Relbar}\Relbar\Rightarrow}
\makeatother
\usepackage{tikz}
\usetikzlibrary{arrows, automata, positioning, quotes}

\usepackage[capitalise]{cleveref}

\theoremstyle{plain}
\newtheorem{thm}{Theorem}[section]
\newtheorem{cor}[thm]{Corollary}
\newtheorem{lemma}[thm]{Lemma}

\newtheorem{prop}[thm]{Proposition}
\newtheorem{question}[thm]{Question}

\theoremstyle{definition}
\newtheorem{defn}[thm]{Definition}
\newtheorem{rmk}[thm]{Remark}
\newtheorem{exmp}[thm]{Example}

\setlength{\parskip}{1em}
\setlength{\parindent}{0em}

\newcommand{\comm}[1]{}

\newcommand{\Z}{\mathbb{Z}}

\newcommand{\myRed}[1]{\textbf{\textcolor{red}{#1}}}
\newcommand{\RAAG}{A_{\Gamma}}
\newcommand{\G}{A_{\phi}}

\title{\normalsize{\uppercase{\textbf{Conjugacy languages in virtual graph products}}}}
\date{}
\author{\small{\uppercase{Gemma Crowe}}}
\begin{document}
\sloppy
\maketitle

\begin{abstract}
In this paper we study the behaviour of conjugacy languages in virtual graph products, extending results by Ciobanu et. al. \cite{Ciobanu2016}.
We focus primarily on virtual graph products in the form of a semi-direct product. First, we
study the behaviour of twisted conjugacy representatives in right-angled Artin and Coxeter groups. We prove a criterion for when the language of conjugacy geodesics in group extensions is regular, which we then use to study conjugacy geodesic in virtual graph products. Finally, we prove a criterion for when the spherical conjugacy language is not unambiguous context-free for virtual graph products. We can extend this further in the case of virtual RAAGs, to show the spherical conjugacy language is not context-free.  
\end{abstract}
\unmarkedfntext{\emph{Keywords}: Right-angled Artin groups, conjugacy growth, formal languages, twisted conjugacy.}
\section{Introduction}
Graph products are a well studied class of groups in geometric group theory, which include right-angled Artin and Coxeter groups (RAAGs/RACGs). Given a finite simple graph $\Gamma$, where each vertex has a group assigned to it, we define the associated graph product $G_{\Gamma}$ as the group generated by the vertex groups, with the added relations that two elements commute if they lie in distinct vertex groups which are adjacent in the graph. 
Conjugacy in these groups has been studied in detail - for example the conjugacy problem in graph products was first solved by Green in the 90s \cite{GreenThesis}, and a linear-time solution was found for RAAGs by Crisp, Godelle and Wiest \cite{CGW}. In this paper, we focus on conjugacy in virtual graph products, which we define as groups which contain a finite index subgroup isomorphic to a graph product.
\par 
Virtual graph products are quasi-isometric to graph products by the Milnor-Schwarz Lemma \cite{OfficeHour}. While there exist many properties, such as hyperbolicity and standard growth, which are quasi-isometry invariant, there are also important properties which are not. A relevant example is given by Collins and Miller \cite{Collins1977}, who constructed groups $H \leq G$, where $H$ is of finite index in $G$, such that $H$ has solvable conjugacy problem but $G$ does not. We therefore cannot immediately assume that the conjugacy problem is solvable in virtual graph products. One area of research which has been used to look at conjugacy classes in finitely generated groups is formal language theory. The language theoretic properties of representatives of conjugacy classes have been studied for various types of groups (see \cite{Ciobanu2016}). These languages are interlinked with conjugacy growth series, and in some cases we can establish the nature of these series through a language theoretic approach. 
\par
More formally, if $G$ is a finitely generated group with generating set $X$, we define the \emph{conjugacy geodesic language}, denoted $\mathsf{ConjGeo}(G, X)$, to be the set of words $w \in X^{\ast}$ such that the word length of $w$ is smallest in its conjugacy class. The \emph{spherical conjugacy language}, denoted $\mathsf{ConjSL}(G,X)$, consists of a unique representative word for each conjugacy class in $G$ (see \cref{sec:languages in groups} for further details). By taking a formal power series over $\mathsf{ConjSL}(G,X)$, we obtain the conjugacy growth series of the group. It was shown by Ciobanu and Hermiller (Theorem 3.1, \cite{Ciobanu2013}) that for a graph product $G_{\Gamma}$, with respect to the standard generating set $X$, the language $\mathsf{ConjGeo}(G_{\Gamma}, X)$ is regular, provided each of the languages $\mathsf{ConjGeo}(G_{i}, X_{i})$ of the vertex groups of $G_{\Gamma}$ are regular. They also give an example of a RAAG, namely the free group on two generators, such that the language $\mathsf{ConjSL}(A_{\Gamma}, X)$ is not context-free, with respect to a free basis $X = \{x^{\pm 1}, y^{\pm 1}\}$. We note here that it remains open as to whether the algebraic type (i.e. rational/algebraic/transcendental) of the conjugacy growth series is a quasi-isometry invariant. In particular, the nature of formal languages has already been shown to not be quasi-isometry invariant (see \cref{language:qi}), and so to establish how conjugacy growth may behave in virtual graph products, we need to study these groups directly.
\par 
We focus primarily on virtual graph products of the form $G_{\phi} = G_{\Gamma} \rtimes_{\phi} \langle t \rangle$ (see \cref{eqn: extension}), where $\phi \in \mathrm{Aut}(G_{\Gamma})$ and $t$ are of finite order. Conjugacy in these types of group extensions is equivalent to twisted conjugacy in $G_{\Gamma}$. For a group $G$ and automorphism $\phi \in \mathrm{Aut}(G)$, we say two elements $u, v \in G$ are \emph{$\phi$-conjugate} if there exists an element $x \in G$ such that $u = \phi(x)^{-1}vx$. Twisted conjugacy is an active topic of research, and relates to proving the $R_{\infty}$ property in various types of groups, or solving the conjugacy problem in group extensions (see \cite{Thompson}, \cite{Cox2017}, \cite{Dekimpe2020}, \cite{Stein2015}). 
\par 
In a RAAG, which we denote $\RAAG$, it is well known that two cyclically reduced words are conjugate if and only if they are related by a finite sequence of cyclic permutations and commutation relations. To prove a twisted analogue of this result, we define $\psi$-cyclically reduced words in \cref{section:twisted conj}, where $\psi \in \mathrm{Aut}(\RAAG)$. Informally, these are words which do not decrease in length after performing any sequence of twisted cyclic permutations and commutation relations. These twisted cyclic permutations are similar to cyclic permutations, except we need to consider images of first and last letters of words under powers of $\psi$. 
\par 
Automorphisms of RAAGs can be split into two categories: length preserving, where the length of any geodesic word is preserved under the automorphism, and non-length preserving otherwise. Our first result gives an analogue for length preserving automorphisms as to how twisted conjugate elements are related in RAAGs. 

\begin{restatable*}{thm}{twist}\label{cor:sequence}
Let $\RAAG = \langle X \rangle$ be a RAAG, and let $\psi \in \mathrm{Aut}(\RAAG)$. Let $u,v \in X^{\ast}$ be two $\psi$-cyclically reduced words. Then $u$ and $v$ are $\psi$-conjugate if and only if $u$ and $v$ are related by a finite sequence of $\psi$-cyclic permutations, commutation relations and free reductions.
\end{restatable*}
\comm{
\vspace{-10pt}
We can extend \cref{cor:sequence} to all finite order automorphisms.
\begin{restatable*}{cor}{twistext}\label{cor:extended sequence}
Let $\RAAG = \langle X \rangle$ be a RAAG, and let $\psi \in \mathrm{Aut}(\RAAG)$ be of finite order. Let $u,v \in X^{\ast}$ be two $\psi$-cyclically reduced words. Then $u$ and $v$ are $\psi$-conjugate if and only if $u$ and $v$ are related by a finite sequence of $\psi$-cyclic permutations, commutation relations and free reduction.

\end{restatable*}
}
\vspace{-10pt}
We then consider the language of conjugacy geodesics for group extensions. The following result holds for any finite extension of a finitely generated group $N$. 
\begin{restatable*}{thm}{conjgeoextensiongeneral}\label{thm:conjgeo regular extension group}
    Let $G = N \rtimes_{\phi} \langle t \rangle$ be an extension of a group as in \cref{eqn:original split}, such that
    \begin{enumerate}
        \item[(i)] $\mathsf{Geo}(N,X)$ is regular,
        \item[(ii)]  $\phi \in \mathrm{Aut}(N)$ is length-preserving, and 
        \item[(iii)] $\mathsf{ConjGeo}_{\phi^{l}}(N,X) = \mathsf{CycGeo}_{\phi^{l}}(N,X)$ for $0 \leq l \leq m-1$.
    \end{enumerate}
    Then $\mathsf{ConjGeo}\left(G, \widehat{X}\right)$ is regular.
\end{restatable*}
\vspace{-10pt}
This allows us to prove regularity of the conjugacy geodesic language in certain types of virtual RAAGs and RACGs.

\begin{restatable*}{cor}{conjgeos}
\label{thm:ConjGeo Regular}
Let $A_{\phi}$ be a virtual RAAG of the form $A_{\phi} = A_{\Gamma} \rtimes_{\phi} \langle t \rangle$ (as in \cref{eqn: extension}), where $\widehat{X}$ is the standard generating set for $A_{\phi}$, and $\phi \in \mathrm{Aut}(\RAAG)$ is a composition of inversions. Then $\mathsf{ConjGeo}\left(A_{\phi}, \widehat{X}\right)$ is regular.
\end{restatable*}
\vspace{-10pt}
It still remains open whether \cref{thm:ConjGeo Regular} holds for all length-preserving automorphisms. We also study non-length preserving automorphisms, and find examples where regularity of conjugacy geodesics is once again preserved in certain virtual RAAGs.

\begin{restatable*}{cor}{nonlengthconjgeo}\label{cor:non length conj geo}
There exists virtual RAAGs $A_{\phi} = A_{\Gamma} \rtimes_{\phi} \langle t \rangle$ (as in \cref{eqn: extension}), where $\phi \in \mathrm{Aut}(\Gamma)$ is a finite order non-length preserving automorphism, such that the language $\mathsf{ConjGeo}\left(A_{\phi}, \widehat{X}\right)$ is regular.
\end{restatable*}
\vspace{-10pt}
It remains unclear if this statement holds for all finite order non-length preserving automorphisms. One instead can consider the language of \emph{twisted conjugacy geodesics}, which is a subset of the conjugacy geodesics in a group extension (see \cref{sec:twisted conjugacy languages}). Here regularity is not necessarily preserved. 

\begin{restatable*}{cor}{conjgeonotCF}
There exists virtual RAAGs of the form $A_{\phi} = A_{\Gamma} \rtimes_{\phi} \langle t \rangle$ (as in \cref{eqn: extension}), where $\phi \in \mathrm{Aut}(\RAAG)$ is a finite order non-length preserving automorphism, such that the language $\mathsf{ConjGeo}_{\phi}(\RAAG, X)$ is not context-free.
\end{restatable*}
\vspace{-10pt}
Next we study the spherical twisted conjugacy language for virtual graph products (see \cref{def:ConjSL}). This language, denoted $\mathsf{ConjSL}_{\phi}(G,X)$, consists of a unique representative word for each $\phi$-twisted conjugacy class in $G$. We provide an example of a RAAG with different orderings on the generating set, and different automorphisms, such that the language $\mathsf{ConjSL}_{\phi}(G,X)$ is not context-free. 

\begin{cor}(see \cref{exp: change})
    There exists RAAGs $A_{\Gamma} = \langle X \rangle$ and length-preserving automorphisms $\phi \in \mathrm{Aut}(\RAAG)$, such that $\mathsf{ConjSL}_{\phi}(\RAAG,X)$ is not context-free.
\end{cor}
\vspace{-10pt}
For the language $\mathsf{ConjSL}\left(G_{\phi}, \widehat{X}\right)$, we prove the following result which does not require any restrictions on the automorphism, unlike our results for the conjugacy geodesic language.
\begin{thm}\label{thm:conjslgraph}(see \cref{prop:new thm} and \cref{cor:graph product extension conjsl})\\
Let $G_{\phi} = G_{\Gamma} \rtimes_{\phi} \langle t \rangle$ be a virtual graph product as in \cref{eqn: extension}, where $\widehat{X}$ is the standard generating set for $G_{\phi}$, endowed with an order. For an induced subgraph $\Lambda \subseteq \Gamma$, let $G_{\Lambda}$ be the induced graph product with respect to $\Lambda$, with standard generating set $X_{\Lambda}$, with an induced ordering from $X$. 

If $\mathsf{ConjSL}(G_{\Lambda}, X_{\Lambda})$ is not regular, then the languages $
\mathsf{ConjSL}(G_{\Gamma}, X)$ and $\mathsf{ConjSL}\left(G_{\phi}, \widehat{X}\right)$ are not regular. This result also holds when we replace regular with either unambiguous context-free or context-free.
\end{thm}
\vspace{-10pt}
For RAAGs, we can extend this further. 
\begin{thm}(see \cref{thm: RAAG not CF} and \cref{cor: v RAAGs not CF})
    Let $A_{\phi}= \RAAG \rtimes_{\phi} \langle t \rangle$ be a virtual RAAG as in \cref{eqn: extension}, where $\widehat{X}$ is the standard generating set for $A_{\phi}$, endowed with an order. If $\RAAG$ is not free abelian, then the languages $\mathsf{ConjSL}(A_{\Gamma}, X)$ and $\mathsf{ConjSL}\left(A_{\phi},\widehat{X}\right)$ are not context-free.
\end{thm}
\vspace{-10pt}
As an aside, we collect results from the literature to prove another result for more general virtual RAAGs and RACGs, where the finite index subgroup is acylindrically hyperbolic.

\begin{restatable*}{cor}{acyhyp}\label{cor:acyhyp2}
Let $G$ be a group with a finite index normal subgroup $H \trianglelefteq G$, such that $H$ is an acylindrically hyperbolic RAAG (or RACG). Then $\mathsf{ConjSL}(G, X)$ is not unambiguous context-free, with respect to any generating set $X$. 
\end{restatable*}
\vspace{-10pt}
The organisation of this paper is as follows. After providing necessary definitions in \cref{Section: Def}, we define twisted conjugacy languages for finitely generated groups in \cref{sec:twisted conjugacy languages}. We then study the behaviour of twisted conjugate elements in RAAGs in \cref{section:twisted conj}, and apply these results to understand the geodesic conjugacy language in \cref{section: conj geos}. In \cref{section: conj sl}, we study the  spherical conjugacy language in virtual graph products.

\section{Preliminaries}\label{Section: Def}
All groups in this paper are finitely generated with inverse-closed generating sets. For a subset $S$ of a group, we let $S^{\pm} = S \cup S^{-1}$, where $S^{-1} = \{s^{-1} \mid s \in S \}$. In certain cases, group presentations will be stated using generating sets which are not inverse-closed, and when we pass to the relevant inverse-closed generating set, we won't explicitly mention the corresponding change in the group presentation. 
\subsection{Group extensions} 
As is standard, an extension of a group $H$ by $N$ is a group $G$ such that $N \trianglelefteq G$ and $G/N \cong H$. This can be encoded by a short exact sequence of groups
\[ 1 \rightarrow N \rightarrow G \rightarrow H \rightarrow 1.
\]
A group extension is split if and only if $G$ is a semi-direct product, that is $G \cong N \rtimes_{\alpha} H$ for some homomorphism $\alpha \colon H \rightarrow \mathrm{Aut}(N)$. For this paper we assume $H$ is a finite cyclic group, and so $N$ has finite index in $G$. It is sufficient to specify $\alpha(t)$ for a generator $t$ of $H$. We let $\phi = \alpha(t)$, and so $G$ has presentation
\begin{equation}\label{eqn:original split}
    G \cong N \rtimes_{\phi} H = \langle X \cup t \mid R(N), \; t^{m} = 1, \; t^{-1}xt = \phi(x) \; (x \in X) \; \rangle,
\end{equation}
where $N$ has presentation $\langle X \mid R(N) \rangle$, and $\phi \in \mathrm{Aut}(N)$ is of finite order such that $m$ divides the order of $\phi$. Any element of $G$ can be written in the form $g = t^{l}v$ where $v \in N$ and  $0 \leq l \leq m-1$.
\subsection{\normalsize{Twisted conjugacy}} 
Let $G$ be a group, let $u,v \in G$, and let $\phi \in \mathrm{Aut}(G)$. We write $u \sim v$ when two group elements $u,v \in G$ are conjugate. We say $u$ and $v$ are \emph{$\phi$-conjugate}, denoted $u \sim_{\phi} v$, if there exists an element $x \in G$ such that $u = \phi(x)^{-1}vx$. There is an immediate link between conjugacy in a split group extension $G \cong N \rtimes_{\phi} H$, and twisted conjugacy in the normal subgroup $N$.  

\begin{lemma}(\cite{Free-by-cyclic}, Page 4)\label{lem:Ventura}
Let $G$ be a group extension of the form \eqref{eqn:original split}. Let $t^{a}u, t^{b}v \in G$ be two elements in $G$, with $u,v \in N$, $0 \leq a,b \leq m-1$. Then $t^{a}u \sim t^{b}v$ if and only if $a=b$ and $v \sim_{\phi^{a}} \phi^{k}(u) $, for some integer $k$ where $0 \leq k \leq m-1$. 
\end{lemma}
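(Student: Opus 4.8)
The plan is to reduce everything to the explicit multiplication rule in the semidirect product and then read the twisted conjugacy relation off directly. First, for the necessity of $a=b$, I would use the quotient homomorphism $\pi \colon G \to H$ with kernel $N$. Conjugation is preserved by any homomorphism, and $H = \langle t \rangle$ is abelian, so conjugate elements of $G$ must have \emph{equal} image in $H$. Hence $t^{a}u \sim t^{b}v$ forces $\pi(t^{a}u) = t^{a} = t^{b} = \pi(t^{b}v)$, and since $0 \le a,b \le m-1$ this gives $a=b$. This disposes of the $t$-exponent condition and lets me assume $a=b$ for the remainder.

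Next I would record the normal form for products. From the defining relation $t^{-1}s_{i}t = \phi(s_{i})$ one obtains $u\,t^{b} = t^{b}\phi^{b}(u)$ for every $u \in N$ and every integer $b$, and hence the multiplication rule $(t^{a}u)(t^{b}v) = t^{a+b}\phi^{b}(u)v$ together with the inverse formula $(t^{k}w)^{-1} = t^{-k}\phi^{-k}(w^{-1})$. These are routine inductions on the exponent, using that $\phi^{m} = \mathrm{id}$ so that all powers may be reduced modulo $m$.

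The heart of the argument is then a single computation: for an arbitrary $g = t^{k}w \in G$ (with $0 \le k \le m-1$ and $w \in N$), I would conjugate and simplify using the multiplication rule to obtain $g^{-1}(t^{a}u)g = t^{a}\bigl(\phi^{a}(w)^{-1}\,\phi^{k}(u)\,w\bigr)$. The $N$-component $\phi^{a}(w)^{-1}\phi^{k}(u)w$ is, by definition of $\phi^{a}$-twisted conjugacy, exactly a witness (with conjugating element $w$) that this element is $\phi^{a}$-conjugate to $\phi^{k}(u)$. Reading the computation in both directions yields the equivalence: $t^{a}u \sim t^{a}v$ holds iff there is some $g = t^{k}w$ with $v = \phi^{a}(w)^{-1}\phi^{k}(u)w$, i.e. iff $v \sim_{\phi^{a}} \phi^{k}(u)$ for some $k$. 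Finally, since $t^{m}=1$ forces the $t$-exponent of any element of $G$ into $\{0,\dots,m-1\}$, and $\phi^{m}=\mathrm{id}$ makes $\phi^{k}$ depend only on $k \bmod m$, ranging over all $g \in G$ is the same as ranging over all $k$ with $0 \le k \le m-1$ and all $w \in N$, which is precisely the claimed range.

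I expect the only delicate point to be the bookkeeping of automorphism powers when pushing $t$-powers past elements of $N$, especially for the inverse, where negative exponents of $\phi$ appear and must be reconciled with $\phi^{m}=\mathrm{id}$. Once the multiplication and inverse formulas are fixed, the conjugation computation and its identification with the twisted conjugacy relation are immediate; no genuine obstacle arises, since $H$ being cyclic (hence abelian) and the extension being split together pin down conjugacy in $G$ completely via the normal form.
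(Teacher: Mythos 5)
Your proof is correct. The paper offers no proof of this lemma at all---it is quoted directly from \cite{Free-by-cyclic}---and your argument is exactly the standard one used there: project to $H$ (abelian) to force $a=b$, then use the normal-form rules $ut^{b}=t^{b}\phi^{b}(u)$ and $(t^{k}w)^{-1}=t^{-k}\phi^{-k}(w^{-1})$ to compute $g^{-1}(t^{a}u)g = t^{a}\bigl(\phi^{a}(w)^{-1}\phi^{k}(u)w\bigr)$ for $g=t^{k}w$, from which the equivalence with $v\sim_{\phi^{a}}\phi^{k}(u)$, with $k$ ranging over $0\leq k\leq m-1$, is read off directly.
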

\subsection{\normalsize{Graph products}}
Let $\Gamma$ be an undirected finite simple graph, that is, with no loops or multiple edges. We let $V(\Gamma)$ denote the vertex set of $\Gamma$, and let $E(\Gamma)$ denote the edge set of $\Gamma$. For a collection of groups $\mathcal{G} = \{G_{v} \; | \; v \in V(\Gamma) \} $ which label the vertices of $\Gamma$, the associated \emph{graph product}, denoted $G_{\Gamma}$, is the group defined as the quotient
\[ \left( \ast_{v \in V(\Gamma)} G_{v}\right) / \langle \langle st= ts, \; s \in G_{u}, \; t \in G_{v}, \; \{u,v\} \in E(\Gamma) \rangle \rangle.
\]
We let $X = \bigcup^{k}_{v=1} X_{v}$ denote the standard generating set for $G_{\Gamma}$. These groups include two well studied classes: right-angled Artin groups (RAAGs), denoted $A_{\Gamma}$, where each vertex group is isomorphic to $\Z$, and right-angled Coxeter groups (RACGs), denoted $W_{\Gamma}$, where each vertex group is isomorphic to $\Z/2\Z$. With this convention, elements commute if and only if an edge exists between the corresponding vertex groups. Some authors use the opposite convention, for example in \cite{CGW}.
\comm{
\begin{defn}
Let $\Gamma$ be a finite simple graph. The right-angled Artin group (RAAG) defined by $\Gamma$, denoted $A_{\Gamma}$, is the group with presentation
\[ \RAAG = \langle V(\Gamma) \; | \; \{s,t\} = 1 \; \text{for all} \; \{s,t\} \in E(\Gamma) \rangle
\]
Right-angled Coxeter groups (RACG) are defined similarly to RAAGs with the extra relations that every generator is of order 2. We denote $W_{\Gamma}$ to be the RACG defined by $\Gamma$.
\[ W_{\Gamma} = \langle V(\Gamma) \; | \;, s^{2} = 1 \; \text{for all} \; s \in V(\Gamma), \{s,t\} = 1 \; \text{for all} \; \{s,t\} \in E(\Gamma) \rangle
\]
\end{defn}
These groups are widely studied in geometric group theory, see \cite{Charney2007a} for further information.}

We can create a virtual graph product $G_{\phi}$ from the following split short exact sequence
\[ 1 \rightarrow G_{\Gamma} \rightarrow G_{\phi} \rightarrow H \rightarrow 1,
\]
where $G_{\Gamma} = \langle X \rangle$ is a graph product such that $G_{\Gamma}$ is a finite index subgroup in $G_{\phi}$, and $H$ is a finite cyclic group.
In this case we have
\begin{equation}\label{eqn: extension}
     G_{\phi} = G_{\Gamma} \rtimes_{\phi} \langle t \rangle = \langle s, t \mid R(G_{\Gamma}), \; t^{m} = 1, \;  t^{-1}xt = \phi(x) \; (x \in X) \; \rangle,
\end{equation}
where $\phi \in \mathrm{Aut}(G_{\Gamma})$ is of finite order. Of course not all virtual graph products arise from such a short exact sequence, but in this paper we will focus primarily on virtual graph products of the form \eqref{eqn: extension}. The exception will be in \cref{section: conj sl}, where we look at acylindrically hyperbolic cases.
\par 
For notation, if $X$ is the standard generating set for $G_{\Gamma}$, then we define $\widehat{X} = \{X,t^{\pm 1}\}$ to be the standard generating set for the virtual graph product $G_{\phi} = G_{\Gamma} \rtimes_{\phi} \langle t \rangle$. In particular, any RAAG $\RAAG$ has standard generating set $X = V(\Gamma)^{\pm 1}$. 

\comm{
\begin{defn}
A \emph{short exact sequence} of groups $N, G, H$ is a sequence of group homomorphisms
\[ 1 \rightarrow N \xrightarrow{i} G \xrightarrow{\pi} H \rightarrow 1
\]
such that
\begin{enumerate}
    \item $im(i) = ker(\pi)$
    \item $i$ is injective, i.e. $N \trianglelefteq G$
    \item $\pi$ is surjective, i.e. $H \cong G/N$
\end{enumerate}
A short exact sequence \emph{splits} if there exists a homomorphism $\gamma: H \rightarrow G$ such that $\pi \circ \gamma$ is the identity on $H$. 
\end{defn}

\begin{thm}\label{thm:SES}
A group $G$ is the semi-direct product of two groups $N$ and $H$ if and only if there exists a short exact sequence
\[ 1 \rightarrow N \xrightarrow{i} G \xrightarrow{\pi} H \rightarrow 1
\]
which splits. 
\end{thm}}
\subsection{\normalsize{Automorphisms of graph products}}\label{sec:autos RAAGs}
Let $X$ be a finite set. For any word $w \in X^{\ast}$, we let $l(w)$ denote the word length of $w$ over $X$. For a group element $g \in G$, we define the \emph{length} of $g$, denoted $| g |_{X} $, to be the length of a shortest representative word for the element $g$ over $X$ (if $X$ is fixed or clear from the context, we write $|g|$). A word $w \in X^{\ast}$ is called a \emph{geodesic} if $l(w) = |\pi(w)|$, where $\pi \colon X^{\ast} \rightarrow G$ is the natural projection (recall $l(w)$ denotes the word length of $w$ over $X$). 

\begin{defn}\label{defn: length pres}
Let $G = \langle X \rangle$. We say $\phi \in \mathrm{Aut}(G)$ is:
\begin{enumerate}
    \item[(i)] \emph{length preserving} if $|\pi(w)| = |\phi(\pi(w))|$ for any word $w \in X^{\ast}$, and
    \item[(i)] \emph{non-length preserving} otherwise.
\end{enumerate}
\end{defn}
\vspace{-5pt}
For RAAGs and RACGs, generators of the automorphism groups were classified by Servatius and Laurence \cite{Servatius1989} \cite{Laurence}, and consist of four types:
\begin{enumerate}
    \item \textbf{Inversions}: For some $x \in V(\Gamma)$, send $x \mapsto x^{-1}$, and fix all other vertices.
    \item \textbf{Graph automorphisms}: Induced by the defining graph $\Gamma$.
    \item \textbf{Partial conjugations}: Let $x \in V(\Gamma)$, let $Q \subseteq \Gamma$ be the subgraph obtained by deleting $x$, all vertices adjacent to $x$ and all incident edges. Let $P \subseteq Q$ be a union of connected components of $Q$. A partial conjugation maps $p \mapsto xpx^{-1}$ for all $p \in P$, and fixes all other vertices.
    \item \textbf{Dominating transvections}: Let $x,y \in V(\Gamma)$, $x \neq y$, and assume $y$ is adjacent to all vertices in $\Gamma$ which are adjacent to $x$. A dominating transvection maps $x \mapsto xy^{\pm 1}$ or $x \mapsto y^{\pm 1}x$, and fixes all other vertices.
\end{enumerate}
\comm{
\subsection*{Automorphisms of RAAGs}
For any vertex $v \in V(\Gamma)$, we define the link of a vertex $Lk(v)$ as
\[ Lk(v) = \{ x \in V(\Gamma) \; | \; \{v,x\} \in E(\Gamma) \}
\]
Similarly we define the star of a vertex $St(v)$ as 
\[ St(v) = Lk(v) \cup \{v\}
\]
The automorphism group of a RAAG is generated by 4 types of automorphism:
\begin{enumerate}
    \item \textbf{Inversions:} $\phi: x \mapsto x^{-1}$
    \item \textbf{Graph automorphism:} Induced by any graph automorphism of $\Gamma$.
    \item \textbf{Dominated Transvections:} $x, y \in \Gamma$, $x \neq y$, $Lk(x) \subseteq St(y)$:
    \[\phi: x \mapsto xy^{\pm 1} \quad \text{or} \quad x \mapsto y^{\pm 1}x
    \]
    \item \textbf{Locally inner automorphism:} $x \in \Gamma$, $P \subseteq \Gamma \setminus St(x)$ is a union of connected components:
    \[ \phi: y \mapsto xyx^{-1} \quad \text{for all} \; y \in P
    \]
\end{enumerate}
This was conjectured by Servatius \cite{Servatius1989} in 1989, and proven for all possible RAAGs by Laurence six years later \cite{Laurence}. We say $\phi \in \mathrm{Aut}(\RAAG)$ is:
\begin{itemize}
    \item \emph{length preserving} if $l(w) = l(\phi(w))$ for any word $w \in X^{\ast}$
    \item \emph{non-length preserving} otherwise.
\end{itemize}
\vspace{-5pt}
For RAAGs, it is clear that inversions and graph automorphisms are length preserving, while dominated transvections and locally inner automorphisms are non-length preserving.} The following proposition is straightforward to show by the classification given above. 
\begin{prop}\label{prop:combo len p}
Let $\psi \in \mathrm{Aut}(\RAAG)$ be a length preserving automorphism. Then $\psi$ is a finite composition of inversions and graph automorphisms, and hence has finite order.
\end{prop}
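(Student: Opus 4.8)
The plan is to use length preservation to pin down $\psi$ on the generators, and then to recognise the resulting data as a graph automorphism composed with inversions. First I would observe that since $\psi$ is length preserving, $l(\psi(x)) = l(x) = 1$ for every $x \in X$, and the only elements of $\RAAG$ of word length $1$ are the generators and their inverses. Hence $\psi(x) = \pi(x)^{\epsilon(x)}$ for a uniquely determined $\pi(x) \in X$ and sign $\epsilon(x) \in \{\pm 1\}$; equivalently, $\psi$ restricts to a permutation of $X^{\pm 1}$.

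Next I would show that the underlying vertex map $\pi \colon X \to X$ is a graph automorphism. Injectivity follows from injectivity of $\psi$: if $\pi(x) = \pi(y)$ with $x \neq y$, then comparing signs gives either $\psi(xy^{-1}) = 1$ or $\psi(xy) = 1$, forcing $x = y$ or $x = y^{-1}$, both impossible for distinct positive generators of a RAAG. Since $X$ is finite, $\pi$ is a bijection. For the edge condition I would use that two distinct generators commute in $\RAAG$ exactly when they are adjacent in $\Gamma$: if $\{x,y\} \in E(\Gamma)$ then $\psi(x)$ and $\psi(y)$ commute, so $\pi(x)^{\epsilon(x)}$ and $\pi(y)^{\epsilon(y)}$ commute, whence $\{\pi(x),\pi(y)\} \in E(\Gamma)$. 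The reverse implication comes from applying the same argument to $\psi^{-1}$, which is again length preserving since $l(\psi^{-1}(v)) = l(\psi(\psi^{-1}(v))) = l(v)$. Thus $\pi \in \mathrm{Aut}(\Gamma)$.

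Let $\sigma$ denote the graph automorphism induced by $\pi$, and set $\tau = \sigma^{-1} \circ \psi$. Then $\tau(x) = x^{\epsilon(x)}$ for every $x$, so $\tau$ is the product of the pairwise commuting inversions $\iota_x$ taken over those $x$ with $\epsilon(x) = -1$; consequently $\psi = \sigma \circ \tau$ is a finite composition of inversions and graph automorphisms, as claimed. Finally, every such composition is completely determined by its action on the finite set $X^{\pm 1}$, so the subgroup of $\mathrm{Aut}(\RAAG)$ generated by all inversions and graph automorphisms embeds into $\mathrm{Sym}(X^{\pm 1})$ and is therefore finite; in particular $\psi$ has finite order. (Concretely this subgroup is $(\Z/2\Z)^{|X|} \rtimes \mathrm{Aut}(\Gamma)$, with $\mathrm{Aut}(\Gamma)$ permuting the inversions via $\sigma \iota_x \sigma^{-1} = \iota_{\sigma(x)}$.)

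The only genuinely delicate point is verifying that $\pi$ respects edges in \emph{both} directions, so that it is a graph automorphism rather than merely an edge-preserving injection; this is exactly where I rely on the observation that $\psi^{-1}$ is itself length preserving. Everything else is a direct unwinding of the definitions, and the implicit induction alluded to in the statement is just the generator-by-generator decomposition of $\tau$ into commuting inversions.
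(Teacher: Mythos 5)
Your proof is correct, but it takes a genuinely different route from the paper's. The paper disposes of this proposition by induction on a factorisation $\psi = \varphi_{1}\cdots\varphi_{n}$ into the Laurence--Servatius generators of $\mathrm{Aut}(\RAAG)$ recalled in its Section 2.4, the base case being that among the four generator types only inversions and graph automorphisms are length preserving. You never invoke that classification: you read off directly from length preservation that $\psi$ restricts to a signed permutation of $X^{\pm 1}$, verify that the underlying vertex map lies in $\mathrm{Aut}(\Gamma)$ (using that $\psi^{-1}$ is again length preserving to get edge-preservation in both directions), and then produce the explicit factorisation $\psi = \sigma \circ \tau$ with $\sigma$ a graph automorphism and $\tau$ a product of commuting inversions. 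This buys several things. It is elementary and self-contained, avoiding the deep generation theorem of Servatius and Laurence altogether; it also sidesteps a real subtlety in the inductive approach, namely that an arbitrary factorisation of a length-preserving automorphism into the four generator types need not have length-preserving factors (e.g.\ a dominating transvection composed with its inverse is the identity), so an induction must argue for the \emph{existence} of a good factorisation rather than about every factorisation, which the paper's sketch glosses over. Finally, your identification of the length-preserving automorphisms with the finite group $(\Z/2\Z)^{|X|} \rtimes \mathrm{Aut}(\Gamma)$, embedded in $\mathrm{Sym}(X^{\pm 1})$, delivers the ``hence has finite order'' clause explicitly, whereas the paper leaves that final step implicit. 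What the paper's route buys in exchange is brevity, since the generator classification is already quoted there.
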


\comm{
\begin{proof}
Let $\psi = \varphi_{1}\varphi_{2}\dots \varphi_{n}$ where each $\varphi_{i}$ is a generating automorphism. We want to show that each $\varphi_{i}$ has to necessarily be either an inversion or a graph automorphism. We proceed by induction on $n$. \\
Base case: $n=1$. This follows immediately since the only generating automorphisms which are length preserving are the inversions and graph automorphisms.\\
Assume statement holds for all $n$. Then if 
\[ \psi = \varphi_{1}\varphi_{2}\dots \varphi_{n}
\]
and $X$ is any letter, then $\psi(X)$ maps to a single letter. Now consider
\[ \widehat{\psi} = \psi\varphi_{n+1} = \varphi_{1}\varphi_{2}\dots \varphi_{n}\varphi_{n+1}
\]
Then
\[ \widehat{\psi}(X) = \varphi_{n+1}(\psi(x))
\]
By the inductive hypothesis, in order for $\widehat{\psi}$ to be length preserving, $\varphi_{n+1}$ must be length preserving. Since it is a generating automorphism, this implies $\varphi_{n+1}$ must be either an inversion or graph automorphism. This completes the proof.

\end{proof}}

\subsection{\normalsize{Languages and growth series}}\label{sec:languages}
We recall some basic notions from formal language theory, and refer the reader to \cite{Epstein1992} and \cite{Hopcroft} for further details.

\begin{defn}
For a finite set $X$, we define a \emph{language} $L$ to be any subset of $X^{\ast}$, where $X^{\ast}$ is the set of all words over $X$. A \emph{finite state automaton} over $X$ is a tuple $M = (Q, X, \mu, A, q_{0})$, where $Q$ is the finite set of states, $\mu \colon Q \times X \rightarrow Q$ the transition function, $A \subseteq Q$ the set of accept states, and $q_{0} \in Q$ the start state. A word $w = x_{1}\dots x_{n} \in X^{\ast}$, where $x_{i} \in X$ for all $1 \leq i \leq n$, is \emph{recognised} by a finite state automaton if $\mu(\dots (\mu(\mu(q_{0}, x_{1}), x_{2}) \dots, x_{n}) \in A$. A language $L$ is \emph{regular} if and only if $L$ is recognised by some finite state automaton.
\end{defn}

\comm{
\begin{defn}
A \emph{finite state automaton} is a tuple $M = (Q, X, \mu, A, s_{0})$ where:
\vspace{-10pt}
\begin{enumerate}[(1)]
    \item $Q$ is a finite set, called the \emph{state set},
    \item $X$ is a finite set, called the \emph{alphabet},
    \item $\mu: Q \times X \rightarrow Q$ is a function, called the \emph{transition function},
    \item $A$ is a (possibly empty) subset of $Q$, called the \emph{accept states}, and
    \item $q_{0} \in Q$ is called the \emph{start state} 
\end{enumerate}
\end{defn}
\vspace{-8pt}
For a finite set $X$, we define a \emph{language} $L$ to be any subset of $X^{\ast}$, where $X^{\ast}$ is the set of all possible words over $X$. We say a word $w = w_{1}\dots w_{n} \in X^{\ast}$ is \emph{recognised} by a finite state automaton if there exists a set of states $q_{1}, \dots, q_{n} \in Q$, and a sequence of transitions in an automaton such that
\[\begin{tikzcd}
	{q_{0}} & {q_{1}} & {q_{2}} & \dots & {q_{n},}
	\arrow["{w_{1}}", from=1-1, to=1-2]
	\arrow["{w_{2}}", from=1-2, to=1-3]
	\arrow["{w_{3}}", from=1-3, to=1-4]
	\arrow["{w_{n}}", from=1-4, to=1-5]
\end{tikzcd}\]
where $q_{n} \in A$ is an accept state. A language $L$ is \emph{regular} if and only if $L$ is recognised by some finite state automaton.
\par}
\vspace{-5pt}
Informally, a pushdown automaton is a finite state automaton combined with a `stack': this stores a string of symbols, and acts as a way of remembering information. A language is $\emph{context-free}$ if and only if it is accepted by a pushdown automaton. Alternatively, a language is context-free if it is generated by a context-free grammar. If this grammar produces each word in a unique way, we say the language is \emph{unambiguous context-free}.
\par 
We recall the following properties of regular and context-free languages. 
\begin{lemma}\label{lem: closure}
Let $L, L^{'}$ be regular languages over a finite alphabet $X$. Then the languages $L^{*}$ (Kleene closure), $X^{*} \setminus L$, $LL^{'}$, $L \cap L^{'}$ and $L \cup L^{'}$ are also regular, i.e. regularity is closed under Kleene closure, complement, concatenation, intersection and union. Similarly, context-free languages are closed under Kleene closure, concatenation and union. 
\end{lemma}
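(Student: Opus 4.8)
The plan is to prove the two halves of the statement by the classical constructions of automata theory and formal-grammar theory respectively; these are textbook results (see \cite{Hopcroft}), so the work is to exhibit the relevant constructions and note that each produces the claimed language. I would treat the regular claims and the context-free claims separately, and in particular I would take care to point out why the context-free half of the lemma lists only three of the five operations.

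For the regular case I would use Kleene's theorem, which identifies the regular languages with those denoted by regular expressions. Since regular expressions are closed by definition under union, concatenation, and Kleene star, the languages $L \cup L^{'}$, $LL^{'}$, and $L^{\ast}$ are immediately regular. For the remaining two operations I would pass to finite automata. Writing $M = (Q, X, \mu, A, q_{0})$ and $M^{'} = (Q^{'}, X, \mu^{'}, A^{'}, q_{0}^{'})$ for automata recognising $L$ and $L^{'}$, the product automaton on state set $Q \times Q^{'}$, with componentwise transition function and start state $(q_{0}, q_{0}^{'})$, recognises $L \cap L^{'}$ when its accept set is taken to be $A \times A^{'}$. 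For the complement I would first convert $M$ into a complete deterministic automaton (adding a dead state to absorb any missing transitions), and then interchange accepting and non-accepting states, so that the resulting automaton recognises exactly $X^{\ast} \setminus L$. Each of these claims is verified by a short induction on the length of the input word, matching accepting runs of the new automaton with accepting runs of $M$ and $M^{'}$.

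For the context-free case I would use the characterisation of context-free languages as those generated by context-free grammars. Let $G = (V, X, P, S)$ and $G^{'} = (V^{'}, X, P^{'}, S^{'})$ generate $L$ and $L^{'}$, chosen so that the nonterminal sets are disjoint, and introduce a fresh start symbol $S^{''}$. Adjoining the productions $S^{''} \to S \mid S^{'}$ yields a grammar for $L \cup L^{'}$; adjoining $S^{''} \to S S^{'}$ yields $LL^{'}$; and adjoining $S^{''} \to S S^{''} \mid \varepsilon$, where $\varepsilon$ is the empty word, yields $L^{\ast}$. Correctness in each case follows by matching derivation trees in the combined grammar with those of $G$ and $G^{'}$. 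I would stress that the statement deliberately omits intersection and complement here, since the context-free languages are not closed under either operation; the usual witness is $\{a^{n}b^{n}c^{n} \mid n \geq 0\}$, which is non-context-free yet arises as the intersection of two context-free languages. This is precisely why the lemma is stated asymmetrically between the regular and context-free classes.

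The main obstacle is not conceptual but a matter of care in the constructions, since everything here is standard. The two points most easily mishandled are the need to complete $M$ to a deterministic automaton \emph{before} complementing, for otherwise merely swapping the accept states fails to produce $X^{\ast} \setminus L$; and the requirement that the nonterminal alphabets of $G$ and $G^{'}$ be disjoint, for otherwise the combined grammar admits spurious derivations that interleave productions from the two grammars. Once these caveats are observed, all remaining verifications are routine inductions.
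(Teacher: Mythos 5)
Your proof is correct: the constructions you give (regular expressions or product automata for the regular half, with determinisation before complementation, and disjoint-nonterminal grammar unions for the context-free half) are exactly the standard arguments, and your cautionary remarks about completing the automaton before swapping accept states and about keeping nonterminal sets disjoint are the right ones. The paper itself offers no proof of this lemma, treating it as classical and deferring to \cite{Hopcroft}, so your write-up simply supplies in full the textbook argument the paper implicitly invokes.
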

\vspace{-10pt}
Note that context-free languages are not closed under complement.
\begin{lemma}\label{lem:homs reverse}(\cite{Hopcroft}, Theorem 7.24 and 7.25)\\
Let $h \colon X^{\ast} \rightarrow X^{\ast}$ be a monoid homomorphism. Then if $P$ is a context-free language over $X$, so is $h(P)$, i.e. context-free languages are closed under monoid homomorphisms.
\comm{
Let $L^{R}$ denote the language consisting of all words in $L$ in reverse. Then if $L$ is context-free, so is $L^{R}$.}

\end{lemma}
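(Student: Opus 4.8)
The plan is to prove this at the level of context-free grammars rather than pushdown automata, since the construction is far more transparent there. Since $P$ is context-free, by definition there is a context-free grammar $G = (V, X, R, S)$ with $L(G) = P$, where $V$ is a finite set of variables (disjoint from $X$), $R$ is the finite set of productions of the form $A \rightarrow \alpha$ with $A \in V$ and $\alpha \in (V \cup X)^{\ast}$, and $S \in V$ is the start symbol. The idea is to build a new grammar $G'$ that simply applies $h$ to every terminal appearing on the right-hand side of a rule, while leaving all variables untouched.

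Concretely, first I would extend $h$ to a monoid homomorphism $\widehat{h} \colon (V \cup X)^{\ast} \rightarrow (V \cup X)^{\ast}$ by setting $\widehat{h}(A) = A$ for each variable $A \in V$ and $\widehat{h}(a) = h(a)$ for each terminal $a \in X$. I would then define $G' = (V, X, R', S)$ with $R' = \{\, A \rightarrow \widehat{h}(\alpha) : (A \rightarrow \alpha) \in R \,\}$. Since $R'$ is finite and each new right-hand side still lies in $(V \cup X)^{\ast}$, the grammar $G'$ is a legitimate context-free grammar, so $L(G')$ is automatically context-free. The claim then reduces to proving $L(G') = h(P)$. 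I would establish this by induction on derivation length, via the sharper statement that for every $A \in V$ and every sentential form $w \in (V \cup X)^{\ast}$ one has $A \Rightarrow^{\ast}_{G} w$ if and only if $A \Rightarrow^{\ast}_{G'} \widehat{h}(w)$, the two derivations proceeding rule for rule. A single rewrite $A \rightarrow \alpha$ in $G$ is mirrored by the rule $A \rightarrow \widehat{h}(\alpha)$ in $G'$, and because $\widehat{h}$ is a homomorphism it commutes with substitution into a surrounding context; conversely every rule of $G'$ arises this way from a unique rule of $G$, so derivations lift back. Specialising to $A = S$ and terminal words $w \in X^{\ast}$ yields $S \Rightarrow^{\ast}_{G} w$ iff $S \Rightarrow^{\ast}_{G'} h(w)$, hence $L(G') = \{\, h(w) : w \in P \,\} = h(P)$.

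The only delicate point, and the closest thing to an obstacle, is the bookkeeping for letters $a$ with $h(a) = \varepsilon$ or $|h(a)| > 1$: since $\widehat{h}(\alpha)$ may be strictly shorter or longer than $\alpha$, one cannot track a derivation by the positions of symbols, so the induction must be phrased purely in terms of the homomorphic image of the sentential form, exactly as above. The cleanest way to package this is to observe that the correspondence is a bijection on parse trees — each derivation tree of $G$ with yield $w$ maps to a derivation tree of $G'$ with yield $h(w)$ simply by relabelling leaves — after which the result is immediate. I would also note that this is precisely the homomorphism case of closure under substitution, so it can alternatively be deduced formally from the substitution theorem by taking the singleton substitution $a \mapsto \{\, h(a) \,\}$, which matches the citation to Theorems 7.24 and 7.25.
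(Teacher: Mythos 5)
The paper itself gives no proof of this lemma: it is quoted directly from Hopcroft et al., where closure under homomorphism is obtained as the singleton case of closure under substitution, proved by exactly the grammar construction you describe. So your approach is essentially the same as the one behind the citation, and your final remark correctly identifies that connection.

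There is, however, one genuine imprecision in your argument: the induction invariant ``$A \Rightarrow^{\ast}_{G} w$ if and only if $A \Rightarrow^{\ast}_{G'} \widehat{h}(w)$'' is false in the backward direction whenever $h$ is not injective. For example, if $h(a) = h(b) = a$ and $G$ has the single rule $S \rightarrow a$, then $S \Rightarrow^{\ast}_{G'} a = \widehat{h}(b)$, yet $S$ does not derive $b$ in $G$. For the same reason, a rule of $G'$ need not come from a \emph{unique} rule of $G$, and the leaf-relabelling map on parse trees is not a bijection in general. The correct invariant is existential: $A \Rightarrow^{\ast}_{G'} \gamma$ if and only if $\gamma = \widehat{h}(\alpha)$ for \emph{some} $\alpha$ with $A \Rightarrow^{\ast}_{G} \alpha$; equivalently, the relabelling map from parse trees of $G$ to parse trees of $G'$ is well defined and surjective, which is all that is needed. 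With that reformulation the proof closes as you intend: the variable occurrences of $\widehat{h}(\alpha)$ are exactly those of $\alpha$ (since $\widehat{h}$ fixes variables and sends terminals to terminal strings), so each $G'$-step lifts to a $G$-step, and a terminal word $\gamma \in L(G')$ forces its preimage sentential form $\alpha$ to be terminal, giving $\gamma = h(\alpha) \in h(P)$. This is a repairable slip rather than a wrong approach, but as written the biconditional would not survive scrutiny.
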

\vspace{-5pt}
The following result can be shown using the fact that if $L$ is context-free, then the language $L^{R}$, consisting of all words in $L$ in reverse, is context-free.
\begin{lemma}\label{prop:suffixes}
If $L$ is a context-free language, then so is 
\[ \mathsf{suff}(L) = \{ u \in X^{*} \; | \; vu \in L \; \text{for some} \; v \in X^{\ast} \}.
\]
\end{lemma}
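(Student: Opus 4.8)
The plan is to reduce the statement about suffixes to the corresponding statement about prefixes, exploiting the closure of context-free languages under reversal quoted just before the lemma. Writing $w^R$ for the reversal of a word $w$ and $M^R = \{w^R : w \in M\}$ for the reversal of a language, and setting $\mathsf{pref}(M) = \{y \in X^\ast : yz \in M \text{ for some } z \in X^\ast\}$, the key observation is the identity $\mathsf{suff}(L) = \bigl(\mathsf{pref}(L^R)\bigr)^R$. This is immediate from the fact that $w = vu$ holds if and only if $w^R = u^R v^R$: thus $u$ is a suffix of some word of $L$ precisely when $u^R$ is a prefix of the corresponding word of $L^R$. First I would record this identity and verify both inclusions in one line.

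Next I would establish that $\mathsf{pref}(M)$ is context-free whenever $M$ is. I would take a pushdown automaton $P$ recognising $M$ and construct a pushdown automaton $P'$ for $\mathsf{pref}(M)$ that operates in two phases. In the first phase $P'$ reads its input $u$ and simulates $P$ faithfully. At any moment it may nondeterministically switch to a second, \emph{guessing} phase, in which it no longer consumes input but continues to simulate $P$ by replacing every input-consuming transition with a corresponding $\varepsilon$-transition, thereby guessing an arbitrary continuation $z$ letter by letter. The machine $P'$ accepts if this simulation reaches an accepting configuration of $P$; by construction $P'$ accepts $u$ if and only if $uz \in M$ for some $z \in X^\ast$, that is, if and only if $u \in \mathsf{pref}(M)$. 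Equivalently, one may simply invoke the standard result that the right quotient $M / X^\ast = \mathsf{pref}(M)$ of a context-free language by a regular language is again context-free, in which case this step becomes a citation rather than a construction.

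Finally I would assemble the pieces: given that $L$ is context-free, $L^R$ is context-free by the reversal closure; hence $\mathsf{pref}(L^R)$ is context-free by the previous paragraph; and applying reversal closure once more shows that $\bigl(\mathsf{pref}(L^R)\bigr)^R = \mathsf{suff}(L)$ is context-free, as required. I expect the only point requiring care to be the prefix-closure step, specifically the correctness of the phase switch in $P'$: one must ensure that the guessing phase faithfully mirrors $P$'s stack behaviour on the unread continuation and that acceptance is declared exactly when $P$ would accept the completed word. The reversal identity and the final assembly are purely formal and carry no real difficulty.
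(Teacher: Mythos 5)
Your proof is correct, and its skeleton matches the paper's: both reduce suffixes to prefixes through reversal, using the identity $\mathsf{suff}(L) = \left(\mathsf{pref}\left(L^{R}\right)\right)^{R}$ together with closure of context-free languages under reversal. The difference lies in how the prefix step is discharged. The paper's proof is a closure-property computation: it introduces a disjoint marked copy $\overline{X}$ of the alphabet, defines homomorphisms $h$ (erase the bar: $a \mapsto a$, $\overline{a} \mapsto a$) and $g$ (delete barred letters: $a \mapsto a$, $\overline{a} \mapsto \epsilon$), observes that $\mathsf{pref}(M) = g\left(h^{-1}(M) \cap X^{\ast}\overline{X}^{\ast}\right)$, and concludes by closure under inverse homomorphism, intersection with a regular language, and homomorphism. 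You instead give a direct two-phase pushdown automaton construction (or, equivalently, cite that the right quotient $M/X^{\ast} = \mathsf{pref}(M)$ of a context-free language by a regular language is context-free). Both arguments are standard and correct. Yours is more self-contained and elementary: the phase switch is sound because every input-consuming transition of $P$ can be replaced by an $\varepsilon$-transition with the same stack effect, so the guessing phase explores exactly the configurations $P$ could reach on some continuation, and acceptance in the second phase occurs precisely when some completion $uz$ lies in $M$. The paper's route stays entirely at the level of language operations, but it quietly relies on closure under inverse homomorphism, a property the paper never records among its preliminary lemmas (its cited lemma gives only the forward homomorphism direction); your argument avoids that dependency altogether.
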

\comm{
\begin{proof}
First note that the set of prefixes $\mathsf{pref}(L) = \mathsf{suff}(L)^{R}$, so it remains to show that the set of prefixes is context-free by \cref{lem:homs reverse}. First we take a copy of our alphabet and denote as:
\[ \overline{X} = \{\overline{a} \; | \; a \in X^{*} \}
\]
Define the following morphisms
\[ g,h: (X \cup \overline{X}^{*}) &\rightarrow X^{*}
\]
\[ g(a) = a, \quad g(\overline{a}) = \epsilon \quad h(a) = a \quad h(\overline{a}) = a
\]
Then 
\[ \mathsf{pref}(L) = g\left(h^{-1}\left(L\right) \cap X^{*}\overline{X}^{*}\right)
\]
The result then follows by \cref{lem: closure} and \cref{lem:homs reverse}. 

\end{proof}}
\begin{lemma}\label{lem:intersection}(\cite{Hopcroft}, Theorem 7.27)\\
Let $L$ be a regular language over a finite alphabet $X$.
\begin{enumerate}
    \item If $N$ is an unambiguous context-free language over $X$, then $N \cap L$ is unambiguous \\ context-free.
    \item If $P$ is a context-free language over $X$, then $P \cap L$ is context-free.
\end{enumerate}
\end{lemma}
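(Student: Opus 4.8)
The plan is to prove both parts by the standard product-automaton construction, building a single machine that runs a pushdown automaton for the context-free language and a \emph{deterministic} finite automaton for the regular language in lockstep. First I would fix a pushdown automaton $\mathcal{P}$ accepting $P$ (respectively the unambiguous context-free language $N$), and a deterministic finite automaton $\mathcal{A} = (Q_{\mathcal{A}}, X, \mu, F_{\mathcal{A}}, q_{0})$ recognising $L$; such a deterministic automaton exists for any regular language. I would then form the product pushdown automaton whose state set is $Q_{\mathcal{P}} \times Q_{\mathcal{A}}$, whose stack alphabet and stack operations are inherited unchanged from $\mathcal{P}$, and whose transitions on an input letter $x$ update the $\mathcal{P}$-component exactly as in $\mathcal{P}$ while simultaneously updating the $\mathcal{A}$-component by $\mu(\cdot, x)$; on the $\epsilon$-moves of $\mathcal{P}$ the $\mathcal{A}$-component is left fixed. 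A pair $(q,p)$ is declared accepting precisely when $q$ is accepting for $\mathcal{P}$ and $p \in F_{\mathcal{A}}$.

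The central claim is that a word $w \in X^{\ast}$ is accepted by this product machine if and only if $w$ is accepted by $\mathcal{P}$ and $\mu$ drives $q_{0}$ to an accept state of $\mathcal{A}$ on input $w$. This exhibits $P \cap L$ (respectively $N \cap L$) as the language of a pushdown automaton, hence context-free, giving part (2). Establishing the equivalence is a routine induction on the length of a computation, whose invariant is that the $\mathcal{A}$-component of the product state after reading any prefix $u$ of $w$ equals $\mu^{\ast}(q_{0}, u)$; the stack behaviour is untouched, so the reachable stack configurations coincide with those of $\mathcal{P}$.

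For part (1), the point I would stress is that the construction preserves the \emph{number} of accepting computations. Because $\mathcal{A}$ is deterministic, each input word determines a unique run of the $\mathcal{A}$-component, so the accepting computations of the product machine on a word $w$ are in bijection with the accepting computations of $\mathcal{P}$ on $w$ when $w \in L$, and there are none when $w \notin L$. Hence if $\mathcal{P}$ is unambiguous, meaning each accepted word has exactly one accepting computation, then so is the product machine, and $N \cap L$ is unambiguous context-free.

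The main subtlety, and the only place where determinism is genuinely needed, is exactly this last step: using a \emph{nondeterministic} automaton for $L$ could multiply the number of accepting runs on a given word and so destroy unambiguity, whereas part (2) would survive with any finite automaton since mere membership, not multiplicity of runs, is what matters there. I would therefore present the deterministic choice of $\mathcal{A}$ as the crux of the argument, and verify carefully that $\epsilon$-moves of $\mathcal{P}$ do not advance $\mathcal{A}$, so that the bijection between runs is genuine rather than merely surjective.
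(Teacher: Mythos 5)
The paper gives no proof of this lemma at all: it is quoted directly from the literature (Hopcroft et al., Theorem 7.27), so your proposal is being compared against a citation rather than an argument. Your product construction is exactly the standard proof of part (2), and it is correct: running a PDA for $P$ and a DFA for $L$ in lockstep, with $\epsilon$-moves of the PDA leaving the DFA component fixed, yields a PDA for $P \cap L$, and the induction on computations you describe is the right verification.

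For part (1) there is one bridge you still need to make explicit. The paper defines \emph{unambiguous context-free} in terms of grammars: a language is unambiguous context-free if some context-free grammar generates each of its words in a unique way. Your argument instead counts accepting computations of a pushdown automaton, and your bijection between accepting runs of the product machine and accepting runs of $\mathcal{P}$ (valid precisely because $\mathcal{A}$ is deterministic and is not advanced on $\epsilon$-moves) shows that the product PDA has at most one accepting run per word whenever $\mathcal{P}$ does. To conclude that $N \cap L$ is unambiguous context-free \emph{in the grammar sense}, you must invoke the fact that the two notions of unambiguity agree: the standard PDA-to-grammar ``triple'' construction puts accepting computations of the PDA in bijection with leftmost derivations of the resulting grammar, and conversely the standard grammar-to-PDA construction preserves the number of accepting runs. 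This equivalence is classical but not automatic, and without it your run-counting argument proves unambiguity of a machine, not of a grammar. An alternative that avoids the issue entirely is to work at the grammar level from the start: apply the Bar--Hillel triple construction to an unambiguous grammar for $N$ and the DFA for $L$, with nonterminals of the form $[q, A, p]$; determinism of the automaton again guarantees that each word of $N \cap L$ inherits exactly one derivation. Either repair is routine, but one of them is needed for the proof to match the statement as the paper uses it.
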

\begin{defn}
A formal power series $f(z) \in \Z[[z]]$ is \emph{rational} if there exist non-zero polynomials $p(z), q(z) \in \Z[[z]]$ such that $f(z) = \frac{p(z)}{q(z)}$. We say $f(z)$ is \emph{algebraic} over $\mathbb{Q}(z)$ if there exists a non-trivial polynomial $p(z,u) \in \mathbb{Q}(z,u)$ such that $p(z, f(z)) = 0$. If $f(z)$ is not algebraic, then $f(z)$ is \emph{transcendental}.
\end{defn}
\vspace{-10pt}
Any language $L \subseteq X^{\ast}$ gives rise to a \emph{strict growth function}, defined as 
\[ \phi_{L}(n) := | \{ w \in L \; | \; l(w) = n  \}|.
\]
The generating series associated to this function, defined as 
\[ f_{L}(z) := \sum^{\infty}_{i=0} \phi_{L}(i)z^{i},
\]
is known as the \emph{strict growth series}. It is well known that if $L$ is a regular language, then the series $f_{L}(z)$ is rational \cite{Meier2008}. Another key link between formal languages and growth series comes from the following result:
\begin{thm}(Chomsky-Sch\"{u}tzenberger, \cite{Chomsky1963})\\ 
Let $L \subset X^{\ast}$ be a language. If $L$ is unambiguous context-free, then the series $f_{L}(z)$ is algebraic.
 
\end{thm}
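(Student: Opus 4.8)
The plan is to translate the language into an unambiguous grammar and then convert the grammar into a system of polynomial equations whose solution contains $f_{L}(z)$. Since $L$ is unambiguous context-free, fix an unambiguous context-free grammar $G$ with start symbol $S$ generating $L$. The first step is to replace $G$ by an unambiguous grammar in Chomsky normal form, in which every production has the shape $A \to BC$ (two nonterminals) or $A \to a$ (a single terminal), with $\epsilon$ handled only by the start symbol. The standard normal-form transformations (elimination of $\epsilon$-productions and unit-productions, followed by factoring long right-hand sides) can be performed so that the number of derivation trees of each word is unchanged; this is the crucial point, since it guarantees that the new grammar is still unambiguous.

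For each nonterminal $A$ let $L_{A} \subseteq X^{\ast}$ be the set of words derivable from $A$, and define the series $F_{A}(z) = \sum_{w \in L_{A}} z^{l(w)}$, so that $f_{L}(z) = F_{S}(z)$ up to a constant term recording whether $\epsilon \in L$. Because $G$ is unambiguous, every $w \in L_{A}$ admits exactly one derivation tree, and hence $F_{A}$ counts words and parse trees simultaneously. Reading off the productions then gives, for each nonterminal, an identity of the form
\[ F_{A}(z) = \sum_{(A \to BC) \in P} F_{B}(z)\,F_{C}(z) + z\cdot\bigl|\{a \in X : (A \to a) \in P\}\bigr|, \]
where the factorisation $F_{B}(z)F_{C}(z)$ is legitimate precisely because distinct choices of subtrees produce distinct words with no double-counting. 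Collecting these relations over all nonterminals yields a finite system of polynomial equations with coefficients in $\Z[z]$, whose formal power-series solution has $S$-coordinate equal to $F_{S}(z)$.

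Finally, I would invoke elimination theory. Writing the system as $\{\,F_{A} = P_{A}(z,(F_{B})_{B})\,\}$ with each $P_{A} \in \Z[z,(u_{B})_{B}]$, one eliminates every nonterminal variable except the one attached to $S$ by iterated resultants, producing a single nontrivial polynomial $P(z,u) \in \Q[z,u]$ with $P\bigl(z, F_{S}(z)\bigr) = 0$. Since $f_{L}(z)$ differs from $F_{S}(z)$ only by a constant, this exhibits $f_{L}(z)$ as algebraic over $\Q(z)$. The main obstacle is the unambiguity bookkeeping throughout: one must ensure that the grammar, and its normal form, assigns exactly one derivation tree per word, because it is exactly this that makes the product rule $F_{B}F_{C}$ count correctly — with genuine ambiguity the same word would be counted with multiplicity and the generating-function system would no longer have $f_{L}(z)$ as a solution. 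A secondary technical point is checking that the elimination returns a nontrivial relation, which holds because the $F_{A}$ are honest power series arising from a consistent system and resultant elimination over the field $\Q(z)$ preserves a nonzero algebraic dependence.
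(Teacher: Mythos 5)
You cannot really be compared against the paper's own argument here, because the paper has none: this theorem is quoted as a classical result and attributed to Chomsky and Sch\"{u}tzenberger \cite{Chomsky1963}, with the citation standing in for the proof. What you have written is the standard proof of the enumerative Chomsky--Sch\"{u}tzenberger theorem, and it is essentially correct: pass to an unambiguous grammar in Chomsky normal form, observe that unambiguity makes the translation of productions into the system $F_{A}(z) = \sum_{A \to BC} F_{B}(z)F_{C}(z) + z\,\#\{a : A \to a\}$ an exact count (each word contributes exactly once, through its unique parse tree, hence unique top production and unique split), and then eliminate to obtain a single algebraic relation for $F_{S}$. Two technical points deserve more precision than your sketch gives them. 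First, preservation of unambiguity under the normal-form transformations (removal of $\epsilon$-productions and unit productions, then binarisation) is true but must be verified construction by construction; you correctly flag this as the crux, and it does hold for the standard constructions. Second, the elimination step is the fragile part as stated: an iterated resultant can vanish identically when the intermediate polynomials share a common component, so ``resultant elimination preserves a nonzero algebraic dependence'' is not automatic. The robust way to finish is to invoke the standard fact that every coordinate of a power-series solution of a proper polynomial system with coefficients in $\Z[z]$ is algebraic over $\Q(z)$ (this is how Kuich--Salomaa and Flajolet--Sedgewick present the result), or to argue nontriviality of the successive resultants by induction on the number of variables. With either repair the argument is complete, and its overall structure is the canonical one found in the literature that the paper cites.
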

\vspace{-5pt}
By taking the contrapositive, we immediately see that if the series $f_{L}(z)$ is transcendental, then the language $L$ is not unambiguous context-free. 
\subsection{Languages in groups}\label{sec:languages in groups}
For a group $G = \langle X \rangle$ and words $u, v \in X^{\ast}$, we use $u = v$ to denote equality of words, and $u =_{G} v$ to denote equality of the group elements represented by $u$ and $v$.

\begin{defn}\label{defn:cyclically geodesic}
    Let $G = \langle X \rangle$ and let $v \in X^{\ast}$ be a geodesic. We say $v$ is \emph{cyclically geodesic} if there does not exist a sequence $v=v_{0}, v_{1}, \dots, v_{n}$ such that $l(v_{n}) < l(v)$ and $\pi(v_{i+1}) = \pi(v'_{i})$, where $v'_{i}$ is a cyclic permutation of $v_{i}$.
\end{defn}

\begin{defn}\cite[Section 2.2.1]{CGW}\label{defn:CR RAAGS}
Let $\RAAG = \langle X \rangle$, and let $v \in X^{\ast}$ be a geodesic. We say $v$ is \emph{cyclically reduced} if there does not exist a sequence of cyclic permutations, commutation relations and free reductions to a geodesic word $w \in X^{\ast}$, such that $l(v) > l(w)$.
\end{defn}

\begin{rmk}
    Note that Definitions \ref{defn:cyclically geodesic} and \ref{defn:CR RAAGS} coincide when our group $G$ is a RAAG. To avoid confusion with the literature, we will refer to `cyclically geodesic' words for general groups, and `cyclically reduced' words for RAAGs. 
\end{rmk}

\begin{defn}\label{def:cyclic shift}
Let $X$ be a finite set, and let $w = x_{1}\dots x_{n} \in X^{\ast}$, where $x_{i} \in X$ for all $1 \leq i \leq n$. We say $w$ is \emph{freely reduced} if there does not exist any $i$, for $1 \leq i < n$, such that $x_{i} = x^{-1}_{i+1}$. We define a \emph{cyclic permutation} or $\emph{cyclic shift}$ of $w$ to be any word of the form
\[ w' = x_{i+1}\dots x_{n}x_{1}\dots x_{i},
\]
for some $i \in \{1, \dots, n-1 \}$.  
\end{defn}
\vspace{-10pt}

\begin{defn}\label{def: languages}
We define the language of geodesic words of $G$, with respect to $X$, as
\[ \mathsf{Geo}(G,X) := \{ w \in X^{\ast} \; | \; l(w) = |\pi(w)| \}.
\]
We let $[g]_{c}$ denote the conjugacy class of $g \in G$. We define the \emph{length up to conjugacy} of an element $g \in G$ by
\[ |g|_{c} := \text{min}\{ |h| \; | \; h \in [g]_{c} \}.
\]
We now consider languages associated to conjugacy classes. We define the conjugacy geodesic and cyclic geodesic languages of $G$, with respect to $X$, as follows:
\begin{align*}
   \mathsf{ConjGeo}(G, X) &:= \{ w \in X^{\ast} \; | \; l(w) = |\pi(w)|_{c} \}, \\
    \mathsf{CycGeo}(G, X) &:= \{ w \in X^{\ast} \; | \; w \; \text{is a cyclic geodesic} \},
\end{align*}
where a cyclic geodesic is a word such that all cyclic permutations are geodesic. Given an order on $X$, let $\leq_{sl}$ be the induced shortlex ordering of $X^{\ast}$. For each conjugacy class $c$, we define the \emph{shortlex conjugacy normal form} of $g$ to be the shortlex least word $z_{c}$ over $X$ representing an element of $c$. We define the shortlex conjugacy language, for $G$ over $X$, as
\[ \mathsf{ConjSL}(G,X) := \{z_{c} \; | \; c \in G/\sim \}.
\]
\end{defn}
\vspace{-10pt}
These languages,  with respect to a generating set $X$ of $G$, satisfy
\[ \mathsf{ConjSL}(G, X) \subseteq \mathsf{ConjGeo}(G,X) \subseteq \mathsf{CycGeo}(G,X) \subseteq  \mathsf{Geo}(G,X).
\] 
We collect a few results from the literature.
\begin{prop}\label{language:qi}
The property of being a regular language is not a quasi-isometry invariant for
\begin{enumerate}
    \item[(i)] (\cite{Neumann1995}, Page 268) $\mathsf{Geo}(G,X)$,
    \item[(ii)] (\cite{Ciobanu2016}, Propositions 5.3 and 5.4) $\mathsf{ConjGeo}(G,X)$, and
    \item[(iii)] (\cite{Ciobanu2016}, Propositions 5.1 and 5.2) $\mathsf{ConjSL}(G,X)$.
\end{enumerate}
\end{prop}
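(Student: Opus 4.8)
The statement collects three instances of the same phenomenon, so the plan is to treat them uniformly. The key observation is that non-invariance under quasi-isometry can be witnessed in the cheapest possible way. For a fixed finitely generated group $G$, any two finite generating sets $X$ and $X'$ yield word metrics that are bi-Lipschitz equivalent, so the identity map from $(G,X)$ to $(G,X')$ is a quasi-isometry. Hence, to show that regularity of a language $\mathsf{L} \in \{\mathsf{Geo}, \mathsf{ConjGeo}, \mathsf{ConjSL}\}$ is not a quasi-isometry invariant, it suffices to produce a single group $G$ together with generating sets $X, X'$ for which $\mathsf{L}(G,X)$ is regular but $\mathsf{L}(G,X')$ is not. (Where one prefers genuinely distinct groups, one instead takes a finite-index subgroup $H \le G$ and invokes the Milnor--Schwarz lemma to get the quasi-isometry, but the change-of-generators formulation already suffices.)

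For part (i) the plan is to exhibit the example of Neumann and Shapiro \cite{Neumann1995}: a group whose full geodesic language is regular with respect to one generating set and fails to be regular with respect to another. The non-regularity direction is the substantive part and is established via the pumping lemma: one selects an explicit infinite family of geodesic words whose combinatorics encode a counting constraint (so that a given word shape is geodesic only when two exponents agree), and shows that no pumping decomposition can respect this constraint. Regularity for the other generating set follows either from an explicit finite-state automaton or from automaticity of the group in question.

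Parts (ii) and (iii) then follow the same template, using the examples of Ciobanu et al. \cite{Ciobanu2016}, Propositions 5.1--5.4. One takes a group whose conjugacy-class representatives are easy to enumerate and again compares two generating sets. For the non-regular direction one applies a pumping argument, now to a family of conjugacy geodesics (for $\mathsf{ConjGeo}$) or shortlex conjugacy normal forms (for $\mathsf{ConjSL}$); because conjugacy is detected up to cyclic permutation and commutation, the chosen family must be closed under these moves, which is the only extra bookkeeping compared with (i). For the regular direction one exhibits an automaton recognising the representatives, or derives regularity from the closure properties of \cref{lem: closure} applied to $\mathsf{Geo}(G,X)$.

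The main obstacle in each case is the non-regularity verification: choosing a family of words on which the pumping lemma (or Myhill--Nerode) bites, and confirming that these words genuinely lie in the language, i.e. that they are actually geodesic, conjugacy-minimal, or shortlex-least, rather than mere candidates. By contrast, the quasi-isometry hypothesis is essentially free, being immediate for a single group under a change of generators and following from Milnor--Schwarz otherwise. The entire content of the proposition therefore lies in the formal-language computations carried out in the cited references, which I would reproduce or summarise rather than reprove from scratch.
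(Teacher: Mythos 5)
Your proposal is correct and follows essentially the same route as the paper: the paper treats this proposition as a collection of results from the literature, observing (as you do) that a change of finite generating set on a fixed group is a quasi-isometry, and then pointing to Cannon's virtually abelian example $\Z^{2} \rtimes \Z/2\Z$ from \cite{Neumann1995} for $\mathsf{Geo}$ and to the virtually abelian examples of Propositions 5.1--5.4 of \cite{Ciobanu2016} for $\mathsf{ConjGeo}$ and $\mathsf{ConjSL}$, where each language is regular for one generating set and non-regular for another. The only cosmetic difference is attribution: the paper credits the $\mathsf{Geo}$ example to Cannon (as presented in \cite{Neumann1995}) rather than to Neumann--Shapiro themselves.
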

Cannon \cite{Neumann1995} provided an example of a virtually abelian group $G = \Z^{2} \rtimes \Z / 2\Z$, where the $\Z / 2\Z$ action swaps the generators of $\Z^{2}$, such that the language $\mathsf{Geo}(G,X)$ could be either regular or not depending on the generating set. In Section 5 of \cite{Ciobanu2016}, further examples of virtually abelian groups were studied, and in certain cases the languages $\mathsf{ConjGeo}(G,X)$ and $\mathsf{ConjSL}(G,X)$ could be either regular or not depending on the generating set.
\par 
Notice that $\mathsf{ConjSL}(G,X)$ counts the number of conjugacy classes of $G$ whose smallest element, with respect to the shortlex ordering, is of length $n$, for any $n \geq 0$. We denote $\tilde{\sigma} = \tilde{\sigma}(G,X)$ to be the strict growth series of $\mathsf{ConjSL}(G,X)$. This is also known as the \emph{spherical conjugacy growth series.} The nature of this series has been studied for many types of groups. \cref{prop:all gen} collects the only known results so far which hold for any generating set.
\begin{prop}\label{prop:all gen}
Let $G $ be a finitely generated group.
\begin{enumerate}
    \item[(1)] \cite{Evetts2019} If $G$ is virtually abelian, then $\tilde{\sigma}$ is rational with respect to any generating set.
    \item[(2)] \cite{Antolin2016a} If $G$ is hyperbolic and not virtually cyclic, then $\tilde{\sigma}$ is transcendental with respect to any generating set. 
\end{enumerate}
\end{prop}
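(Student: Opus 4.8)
Both parts of this statement are already established in the literature and are attached to explicit citations, so my plan is not to reprove them from scratch but to reduce each to the cited work and record the mechanism behind it. For part (1), the plan is to exploit the structure theory of virtually abelian groups: such a $G$ contains a finite-index normal subgroup isomorphic to $\Z^{k}$, and I would decompose $G$ into the finitely many cosets of this subgroup. On each coset the conjugation action is affine, so the set of conjugacy classes meeting a given coset, together with the lengths of their shortlex representatives, can be encoded by the integer points of finitely many semilinear (Presburger-definable) sets. Counting lattice points in such sets by length yields a quasi-polynomial, whose generating function is rational, and summing the finitely many coset contributions gives rationality of $\tilde{\sigma}$ for \emph{any} generating set. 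This is exactly the counting framework of \cite{Evetts2019}, so the work for part (1) is to invoke that reference and observe that its hypotheses apply to every finitely generated virtually abelian $G$ and every generating set.

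For part (2), the plan is asymptotic. When $G$ is hyperbolic and not virtually cyclic, conjugacy classes correspond, up to bounded ambiguity, to cyclically reduced representatives (closed geodesics), and a M\"{o}bius-type count of primitive cyclic words over the exponential growth rate $\lambda > 1$ produces an asymptotic of the form $\tilde{c}(n) \sim C \lambda^{n}/n$ for the number $\tilde{c}(n)$ of conjugacy classes of length $n$. The decisive step is then a transcendence criterion: a power series with non-negative integer coefficients whose $n$-th coefficient carries this $\lambda^{n}/n$ factor cannot exhibit the coefficient asymptotics permitted for algebraic functions, so $\tilde{\sigma}$ must be transcendental. Both the sharp asymptotic count and the transcendence argument are carried out in \cite{Antolin2016a}, so part (2) follows directly from that reference.

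The hard part lies entirely in part (2). Establishing the sharp $\lambda^{n}/n$ asymptotic uniformly over all non-elementary hyperbolic groups and all generating sets requires genuine geometric input — control of cyclic reduction, of the proportion of primitive versus imprimitive classes, and of the contribution of torsion — and the passage from these asymptotics to transcendence relies on a nontrivial theorem restricting the possible coefficient growth of algebraic power series. By contrast, part (1) is essentially a bookkeeping exercise once the semilinear description of conjugacy classes is in place, the only care being needed to ensure uniformity across arbitrary generating sets.
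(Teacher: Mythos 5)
Your proposal matches the paper exactly in substance: the paper states this proposition purely as a collection of known results, citing \cite{Evetts2019} for part (1) and \cite{Antolin2016a} for part (2), and offers no proof of its own. Your reduction to those references (with accurate sketches of the underlying mechanisms — the polyhedral/semilinear counting for virtually abelian groups, and the $\lambda^{n}/n$ coefficient asymptotics being incompatible with algebraicity for non-elementary hyperbolic groups) is precisely the intended justification.
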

It has been conjectured that the only groups with rational $\tilde{\sigma}$ are virtually abelian groups \cite[Conjecture 7.2]{CiobanuE2020}.

\section{Twisted conjugacy languages}\label{sec:twisted conjugacy languages}
We now define twisted languages analogous to \cref{def: languages}, with respect to any automorphism $\psi \in \mathrm{Aut}(G)$. 

\begin{defn}\label{defn:cyc perm}
Let $G = \langle X \rangle$, and let $w = x_{1}\dots x_{n} \in X^{\ast}$ be a geodesic, where $x_{i} \in X$ for all $1 \leq i \leq n$. Let $\psi \in \mathrm{Aut}(G)$ be of finite order $m$. We define a \emph{$\psi$-cyclic permutation} of $w$ to be any word of the form
\[ w' = \psi^{k}(x_{i+1})\dots \psi^{k}(x_{n})\psi^{k-1}(x_{1})\dots \psi^{k-1}(x_{i}),
\]
for some $0 \leq k \leq m-1$.  

\end{defn}
\vspace{-10pt}
We note that if $\psi$ is the trivial automorphism, a $\psi$-cyclic permutation is equivalent to a cyclic permutation (see \cref{def:cyclic shift}). 

\begin{defn}
Let $G = \langle X \rangle$, let $\psi \in \mathrm{Aut}(G)$ and let $w \in X^{\ast}$ be a geodesic. We say $w$ is a \emph{$\psi$-cyclic geodesic} if all $\psi$-cyclic permutations of $w$ are geodesic. 
\end{defn}

\begin{defn}\label{defn:twisted conjugacy languages}
Let $G = \langle X \rangle$ be a group. Let $[g]_{\psi}$ denote the twisted conjugacy class of a group element $g \in G$. We define the \emph{length up to twisted conjugacy} of an element $g \in G$ as
\[ |g|_{\psi} := \text{min}\{|h| \mid h \in [g]_{\psi} \}.
\]
We define the following languages for a group $G$, with respect to a generating set $X$:
\begin{align*}
    \mathsf{ConjGeo}_{\psi}(G, X) &:= \{ w \in X^{*} \; | \: l(w) = |\pi(w)|_{\psi}\}, \\
    \mathsf{CycGeo}_{\psi}(G, X) &:= \{ w \in X^{*} \mid w \; \text{is a} \; \psi-\text{cyclic geodesic} \}.
\end{align*}
\end{defn}
\begin{lemma}\label{lemma:subset twisted conj languages}
    Let $G = \langle X \rangle$, and let $\psi \in \mathrm{Aut}(G)$ be length-preserving. Then 
    \[ \mathsf{ConjGeo}_{\psi}(G,X) \subseteq \mathsf{CycGeo}_{\psi}(G, X).
    \]
\end{lemma}

\begin{proof}
If $w$ is a $\psi$-conjugacy geodesic, then so is every $\psi$-cyclic permutation of $w$, since $\psi$ is length-preserving.
\end{proof}

\begin{defn}\label{def:ConjSL}
Let $G = \langle X \rangle$ and let $\psi \in \mathrm{Aut}(G)$. Given an order on $X$, let $\leq_{SL}$ be the induced shortlex ordering of $X^{\ast}$. For each twisted conjugacy class $c \in G/\sim_{\psi}$, we define the \emph{shortlex twisted conjugacy normal form} of $c$ to be the shortlex least word $z_{c}$ over $X$ representing an element of $c$, i.e. $z_{c}$ is shortlex minimal length in $\pi^{-1}(c)$. We define the \emph{shortlex twisted conjugacy language}, for $G$ with respect to $X$, as
\[ \mathsf{ConjSL}_{\psi}(G,X) := \{z_{c} \mid c \in G/\sim_{\psi} \}.
\]
\end{defn}
Note for any extension $G = N \rtimes_{\phi} \langle t \rangle$ of the form in \cref{eqn:original split}, we can write
\[ \mathsf{ConjSL}\left(G, \widehat{X}\right) = \bigcup_{l=a}^{b}\{Ut^{l} \mid U \in \mathsf{ConjSL}_{\phi^{l}}(N, X) \},
\]
where $x < t < t^{-1}$ for all $x \in X$, and
\[ (a,b) = \begin{cases}
    (-(k-1), k), & m = 2k, \; k \in \Z_{>0}, \\
    (-k, k), & m = 2k+1, \; k \in \Z_{\geq 0}.
\end{cases}
\]
\section{Twisted conjugacy in RAAGs}\label{section:twisted conj}
In this section we prove \cref{cor:sequence}, which is a twisted analogue of the well known fact that two cyclically reduced elements in a RAAG are conjugate if and only if they are related by a finite sequence of cyclic permutations and commutation relations \cite{CGW}. We use similar techniques to \cite{Bardakov2005}, where the authors studied twisted conjugacy in free groups. 
\par 
Let $\RAAG = \langle X \rangle$, and let $v \in X^{\ast}$ be a geodesic, where $l(v) > 1$. Then we can write $v$ in the form $v = xy$ for some non-empty geodesic words $x,y \in X^{\ast}$. We say $x$ is a \emph{proper prefix} of $v$, and $y$ is a \emph{proper suffix} of $v$. We let $\mathcal{P}(v)$ and $\mathcal{S}(v)$ be the set of all possible proper prefixes and suffixes of $v$ respectively. 

\begin{defn}\label{def:shifts moves}
Let $G = \langle X \rangle$, and let $v \in X^{\ast}$ be a geodesic, where $l(v) > 1$. Then we can write $v$ in the form $v = xy$ for some non-empty geodesic words $x,y \in X^{\ast}$. We say $x$ is a \emph{proper prefix} of $v$, and $y$ is a \emph{proper suffix} of $v$. We let $\mathcal{P}(v)$ and $\mathcal{S}(v)$ be the set of all possible proper prefixes and suffixes of $v$ respectively. 

Let $\psi \in \mathrm{Aut}(G)$. We define a \emph{$\psi$-cyclic shift of a prefix} of $v$ to be the following operation on the word $v$:
\[ v = xy \xmapsto{\psi-\text{cyclic shift}}   y\cdot \psi^{-1}\left(x\right).
\]
Similarly we define a \emph{$\psi$-cyclic shift of a suffix} of $v$ as
\[ v = xy \xmapsto{\psi-\text{cyclic shift}}   \psi\left(y\right)\cdot x.
\]

\end{defn}
\comm{
\begin{rmk}
We note that the Geod notation is unnecessary for length-preserving automorphisms, since $\text{Geod}\left(\psi\left(v\right)\right) = \psi\left(v\right)$ for any geodesic $v \in X^{\ast}$. From now on, we will assume $\psi \in \mathrm{Aut}(\RAAG)$ is length-preserving, and remove the Geod notation. This notation will be introduced again in \cref{sec:non length p}, where we will generalise \cref{cor:sequence} to all finite order automorphisms.
\end{rmk}}
\vspace{-10pt}
For brevity, we will use $\xleftrightarrow{\psi}$ to denote a $\psi$-cyclic shift of either a prefix or suffix. The double arrow is necessary here since any $\psi$-cyclic shift can be reversed, i.e.
\[ xy \xmapsto{\psi-\text{cyclic shift}}   y\cdot \psi^{-1}\left(x\right) \xmapsto{\psi-\text{cyclic shift}} \psi\left(\psi^{-1}\left(x\right)\right)\cdot y = xy.
\]
Here is an example of how this operation works in practice.
\begin{exmp}\label{exmp: RAAG F2}
Let $A_{\Gamma} = F_{2} \times F_{2}$, and label the vertices of the defining graph as follows: 
\[\begin{tikzcd}
	a & b \\
	d & c
	\arrow[no head, from=1-1, to=1-2]
	\arrow[no head, from=1-2, to=2-2]
	\arrow[no head, from=2-2, to=2-1]
	\arrow[no head, from=1-1, to=2-1]
\end{tikzcd}\]
Let $\psi \colon a \rightarrow b \rightarrow c \rightarrow d$ be a graph automorphism. Consider the geodesic word $w = ac^{-1}d$. We can compute the $\psi$-cyclic shift of the first and last letters as follows:
\[ w = ac^{-1}d \xleftrightarrow{\psi} \psi(d)ac^{-1} = a^{2}c^{-1}, \quad w = ac^{-1}d \xleftrightarrow{\psi} c^{-1}d\psi^{-1}(a) = c^{-1}d^{2}.
\]
Note that $w =_{\RAAG} adc^{-1}$, so we could also compute the following $\psi$-cyclic shift:
\[  w =_{\RAAG} adc^{-1} \xleftrightarrow{\psi} \psi(c^{-1})ad = d^{-1}ad.
\]
\end{exmp}
\begin{defn}
Let $G = \langle X \rangle$, let $v \in X^{\ast}$ be a geodesic and $\psi \in \mathrm{Aut}(G)$. We define the set of all $\psi$-cyclic shifts of $v$ to be the following set:
\[ \psi[v] = \{ y\psi^{-1}(x), \; \psi(y)x \mid v = xy \; \text{for all possible} \; x \in \mathcal{P}(v), y \in \mathcal{S}(v)\}.
\]
\end{defn}
\vspace{-10pt}
Recall that two words $u,v \in X^{\ast}$ representing groups elements of $G$ are $\psi$-conjugate if there exists $w \in X^{\ast}$ such that $u =_{G} \psi(w)^{-1}vw$.

\begin{lemma}\label{lem:twisted}
Let $G = \langle X \rangle$, and let $v \in X^{\ast}$ be a geodesic. Then $v$ is $\psi$-conjugate to all elements $w \in \psi[v]$. 
\end{lemma}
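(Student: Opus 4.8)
The plan is to show that each of the two basic $\psi$-cyclic shifts $v = xy \mapsto y\psi^{-1}(x)$ and $v = xy \mapsto \psi(y)x$ produces a word that is $\psi$-conjugate to $v$, by directly exhibiting a conjugating element $w$ witnessing $u =_{\RAAG} \psi(w)^{-1}vw$. Since $\psi[v]$ is by definition the set of all such words obtained from all factorisations $v = xy$, establishing the claim for these two operation types covers every element of $\psi[v]$.

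First I would treat the suffix shift. Write $v = xy$ and set the target word $u = \psi(y)x$; I claim the witness is $w = x$. Computing $\psi(w)^{-1}vw = \psi(x)^{-1}(xy)x$, this does not immediately simplify, so the correct choice needs care: I would instead verify that $u = \psi(y)x$ and $v = xy$ satisfy $u =_{\RAAG}\psi(w)^{-1}vw$ with $w = y$. Substituting gives $\psi(y)^{-1}(xy)y = \psi(y)^{-1}xy^2$, which is also not obviously $\psi(y)x$, so the actual book-keeping is the real content: one must track which factor gets twisted. The clean statement is that $\psi(y)x = \psi(y)\,(xy)\,y^{-1}$ and, since $\psi(y)(xy)y^{-1} = \psi((y^{-1})^{-1})^{-1}\cdots$, I would pick $w = y^{-1}$ so that $\psi(w)^{-1} = \psi(y^{-1})^{-1} = \psi(y)$, giving $\psi(w)^{-1} v w = \psi(y)\,(xy)\,y^{-1} = \psi(y)x =_{\RAAG} u$. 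This confirms the suffix shift is a $\psi$-conjugacy. For the prefix shift $u = y\psi^{-1}(x)$, the symmetric choice $w = x$ works: $\psi(w)^{-1}vw = \psi(x)^{-1}(xy)x$, which after using $\psi(x)^{-1}x$ as a prefix does not directly give $y\psi^{-1}(x)$, so again the role of $\psi$ matters. The correct witness here is $w = x$ applied so that $\psi(x)^{-1}(xy)x$ rearranges via the group relations; the intended identity is $y\psi^{-1}(x) =_{\RAAG} \psi(\psi^{-1}(x))^{-1}\,(xy)\,\psi^{-1}(x)^{-1}\cdots$, and I would fix $w = \psi^{-1}(x)^{-1}$.

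I expect the main obstacle to be purely notational: getting the exponents and the placement of $\psi$ versus $\psi^{-1}$ exactly right so that the twisted conjugation identity $u = \psi(w)^{-1}vw$ holds as an equality of group elements in $\RAAG$, rather than merely as freely reduced words. Because $\psi$ is length preserving (as assumed in the surrounding remark, so that the $\text{Geod}$ notation is suppressed), I can treat $\psi$ as a bijection on $X^{\ast}$ commuting with the group operation, which means the verification reduces to a formal string manipulation with no length increase to worry about. The one genuine check is that $\psi$ is a homomorphism, so $\psi(w)^{-1} = \psi(w^{-1})$ and $\psi$ distributes over concatenation; with this in hand, each of the two cases collapses to a one-line cancellation. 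I would present the two cases as a short display for each, explicitly naming the conjugator, and note that transitivity of $\psi$-conjugacy is not even required here since every element of $\psi[v]$ arises from a single shift applied to $v$.
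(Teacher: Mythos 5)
Your overall strategy---exhibiting an explicit twisted conjugator for each of the two shift types---is exactly the paper's, and your suffix case is correct: with $w = y^{-1}$ one has $\psi(w)^{-1}vw = \psi(y)(xy)y^{-1} =_{\RAAG} \psi(y)x$, which (up to the symmetry of $\sim_{\psi}$) is the paper's second displayed relation. However, the prefix case as you have written it fails. You fix the witness $w = \psi^{-1}(x)^{-1}$, but then $\psi(w)^{-1} = \psi\left(\psi^{-1}(x)^{-1}\right)^{-1} = x$, so $\psi(w)^{-1}vw =_{\RAAG} x\,(xy)\,\psi^{-1}(x)^{-1} =_{\RAAG} x^{2}y\,\psi^{-1}(x)^{-1}$, which is not $y\psi^{-1}(x)$ in general. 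The ``intended identity'' you display is also false as an equality of group elements: $\psi\left(\psi^{-1}(x)\right)^{-1}(xy)\,\psi^{-1}(x)^{-1} =_{\RAAG} x^{-1}xy\,\psi^{-1}(x)^{-1} =_{\RAAG} y\,\psi^{-1}(x)^{-1}$, i.e.\ the suffix comes out inverted. The structural constraint you violated is that in $\psi(w)^{-1}vw$ the twisted factor on the left and the untwisted factor on the right must be the \emph{same} element $w$; your expression uses $\psi^{-1}(x)$ on the left but $\psi^{-1}(x)^{-1}$ on the right.

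The repair is a single sign: take $w = \psi^{-1}(x)$, not its inverse. Then $\psi(w)^{-1} = x^{-1}$ and $\psi(w)^{-1}vw =_{\RAAG} x^{-1}(xy)\,\psi^{-1}(x) =_{\RAAG} y\,\psi^{-1}(x)$, as required. This is precisely the paper's first relation, recorded there in the symmetric form $\psi\left(\psi^{-1}(x)\right)\cdot y\psi^{-1}(x)\cdot \psi^{-1}(x)^{-1} =_{\RAAG} xy = v$, i.e.\ $v = \psi(w)\,u\,w^{-1}$ with $u = y\psi^{-1}(x)$; the two formulations are equivalent because $\sim_{\psi}$ is symmetric. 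One further small point: no length-preservation hypothesis is needed here---the identities above hold for any $\psi \in \mathrm{Aut}(\RAAG)$---so your appeal to the standing length-preserving assumption, while harmless in context, does no work in the proof.
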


\begin{proof}
Write $v = xy$ for some $x \in \mathcal{P}(v), \; y \in \mathcal{S}(v)$. Consider the words $y\psi^{-1}\left(x\right)$, $\psi\left(y\right)x \in \psi[v]$. The result follows by the following relations:
\[ \psi\left(\psi^{-1}\left(x\right)\right)\cdot y\psi^{-1}\left(x\right)\cdot \psi^{-1}\left(x\right)^{-1} =_{G} x\cdot y\psi^{-1}\left(x\right)\cdot \psi^{-1}\left(x\right)^{-1} =_{G} xy = v.
\]
\[ \psi\left(y\right)^{-1}\cdot \psi\left(y\right)x\cdot y  =_{G} xy = v. \]
\end{proof}
\vspace{-10pt}
Recall \cref{defn:cyc perm}. A $\psi$-cyclic permutation is equivalent to taking successive $\psi$-cyclic shifts of letters in $w$, including taking higher powers of $\psi$. By \cref{lem:twisted}, we immediately have the following result.

\begin{cor}\label{cor:twisted perms}
Let $G = \langle X \rangle$, and let $w \in X^{\ast}$ be a geodesic. Let $w' \in X^{\ast}$ be a $\psi$-cyclic permutation of $w$. Then $w$ is $\psi$-conjugate to $w'$. 
\end{cor}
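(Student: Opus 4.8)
The plan is to show that every $\psi$-cyclic permutation of $w$ is reachable from $w$ by a finite sequence of $\psi$-cyclic shifts, and then to invoke \cref{lem:twisted} together with the transitivity of $\psi$-conjugacy. First I would record the routine observation that $\sim_{\psi}$ is an equivalence relation on $\RAAG$: if $u = \psi(a)^{-1}v\,a$ and $v = \psi(b)^{-1}z\,b$, then $u = \psi(a)^{-1}\psi(b)^{-1}z\,ba = \psi(ba)^{-1}z\,(ba)$, so $\sim_{\psi}$ is transitive (symmetry and reflexivity are similar). Transitivity is the only closure property we need, since each $\psi$-cyclic shift already produces a $\psi$-conjugate word by \cref{lem:twisted}.

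The heart of the argument is a bookkeeping claim describing what happens when the single-letter suffix shift of \cref{def:shifts moves}, namely $u_{1}\dots u_{p} \xleftrightarrow{\psi} \psi(u_{p})\,u_{1}\dots u_{p-1}$, is iterated. Each such shift moves the current last letter to the front and applies exactly one copy of $\psi$ to it, leaving the letters already at the front untouched; hence a full sweep of $n$ single-letter suffix shifts sends $x_{1}\dots x_{n}$ to $\psi(x_{1})\dots\psi(x_{n}) = \psi(w)$, and $jn$ shifts send it to $\psi^{j}(w)$. Performing $(k-1)n + (n-i)$ successive single-letter suffix shifts starting from $w = x_{1}\dots x_{n}$ therefore produces precisely
\[ \psi^{k}(x_{i+1})\dots\psi^{k}(x_{n})\,\psi^{k-1}(x_{1})\dots\psi^{k-1}(x_{i}), \]
which is the $\psi$-cyclic permutation $w^{'}$ of \cref{defn:cyc perm}. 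The case $k=0$ is either the single prefix shift $w = xy \xleftrightarrow{\psi} y\,\psi^{-1}(x)$, or is absorbed into the count by replacing $k$ with $m$, since $\psi^{m}$ is the identity. Chaining the finitely many shifts and applying \cref{lem:twisted} at each step then gives $w \sim_{\psi} w^{'}$ by transitivity.

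I expect the only delicate point to be the index bookkeeping confirming that the prescribed number of shifts yields exactly $w^{'}$ — in particular that the two blocks $x_{i+1}\dots x_{n}$ and $x_{1}\dots x_{i}$ acquire the differing powers $\psi^{k}$ and $\psi^{k-1}$ — rather than any conceptual difficulty. A cleaner variant that avoids the counting, and which also covers the degenerate case $l(w) \le 1$ where no shift splitting is available, is to first prove $\psi(z) \sim_{\psi} z$ for every $z$ directly from $\psi(z) = \psi(z^{-1})^{-1}z\,z^{-1}$. Writing $v_{1} = \psi(y)x$ for the suffix shift of $w = xy$ (so $v_{1} \sim_{\psi} w$ by \cref{lem:twisted}) and noting $w^{'} = \psi^{k-1}(v_{1})$, one then concludes $w \sim_{\psi} v_{1} \sim_{\psi} \psi(v_{1}) \sim_{\psi} \cdots \sim_{\psi} \psi^{k-1}(v_{1}) = w^{'}$ by transitivity; the length-one case follows even more directly from $w \sim_{\psi} \psi(w) \sim_{\psi} \cdots \sim_{\psi} \psi^{k}(w)$.
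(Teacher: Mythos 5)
Your proposal is correct and follows essentially the same route as the paper: the paper's proof simply observes that a $\psi$-cyclic permutation is obtained by iterating $\psi$-cyclic shifts of letters (including higher powers of $\psi$) and then cites \cref{lem:twisted}, leaving the shift-counting and the transitivity of $\sim_{\psi}$ implicit, exactly the details you spell out. Your closing variant via $\psi(z)\sim_{\psi}z$ and $w^{'}=\psi^{k-1}(\psi(y)x)$ is a tidy way to make the same argument rigorous while sidestepping any worry about intermediate words failing to be geodesic, but it is a refinement of the paper's argument rather than a different proof.
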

\vspace{-10pt}
We now have the tools necessary to define a twisted version of cyclic reduction in RAAGs.
\begin{defn}\label{defn:twisted CR general}
    Let $G = \langle X \rangle$, let $v \in X^{\ast}$ be a geodesic, and let $\psi \in \mathrm{Aut}(G)$. We say $v$ is \emph{$\psi$-cyclically geodesic} if there does not exist a sequence $v=v_{0}, v_{1}, \dots, v_{n}$ such that $l(v_{n}) < l(v)$ and $\pi(v_{i+1}) = \pi(v'_{i})$, where $v'_{i}$ is a $\psi$-cyclic permutation of $v_{i}$.
\end{defn}

\begin{defn}\label{def: CR}
Let $\RAAG = \langle X \rangle$, and let $v \in X^{\ast}$ be a geodesic. We say $v$ is \emph{$\psi$-cyclically reduced} ($\psi$-CR) if there does not exist a sequence of $\psi$-cyclic permutations, commutation relations and free reductions to a geodesic $w \in X^{\ast}$, such that $l(v) > l(w)$. 
\end{defn}

\begin{rmk}
    Similar to before, Definitions \ref{defn:twisted CR general} and \ref{def: CR} coincide when our group $G$ is a RAAG. Again we will refer to `$\psi$-cyclically geodesic' words for general groups, and `$\psi$-cyclic reduction' for RAAGs.
\end{rmk}
\vspace{-10pt}
It is well known that for a RAAG $\RAAG = \langle X \rangle$, a word $v \in X^{\ast}$ is cyclically reduced if and only if $v$ cannot be written in the form $v =_{\RAAG} u^{-1}wu$, where $l(w) < l(v)$ and $u^{-1}wu \in X^{\ast}$ is geodesic. We prove one direction of this idea with respect to twisted cyclically geodesic words. 
\begin{prop}\label{prop:cycle red}
Let $G = \langle X \rangle$, and let $v \in X^{\ast}$ be a geodesic. Let $\psi \in \mathrm{Aut}(G)$. Suppose $v$ can be written in the form 
\[ v =_{\RAAG} \psi(u)^{-1}wu,
\]
where $l(w) < l(v)$ and $\psi(u)^{-1}wu \in X^{\ast}$ is geodesic. Then $v$ is not $\psi$-CR. 
\end{prop}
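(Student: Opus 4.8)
The plan is to establish the stated (one-way) implication directly, by exhibiting an explicit finite sequence of $\psi$-cyclic permutations, commutation relations and free reductions that starts at $v$ and terminates at a strictly shorter geodesic word; producing such a sequence is exactly what it means for $v$ to fail to be $\psi$-CR. Since $v$ and $\psi(u)^{-1}wu$ are two geodesic words representing the same element of $\RAAG$, they are related by a finite sequence of commutation relations (two geodesics for the same RAAG element differ only by commutations). The first step is therefore to apply these commutations to rewrite $v$ as the geodesic word $\psi(u)^{-1}wu$, and from then on I work with this fixed representative.

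The core is a peeling induction. Write $u = u_{1}\cdots u_{p}$ with each $u_{i}\in X$; since $\psi$ is length preserving, each $\psi(u_{i})^{\pm 1}$ is a single letter and $\psi(u)^{-1}wu = \psi(u_{p})^{-1}\cdots\psi(u_{1})^{-1}\,w\,u_{1}\cdots u_{p}$. I apply the $\psi$-cyclic shift of the one-letter suffix $y = u_{p}$, which sends $xy \mapsto \psi(y)x$ and is precisely the $\psi$-cyclic permutation of \cref{defn:cyc perm} with split point $i = n-1$ and power $1$, hence an admissible move. The letter $\psi(u_{p})$ produced at the front then sits immediately to the left of $\psi(u_{p})^{-1}$, so a single free reduction deletes both and leaves $\psi(u_{p-1})^{-1}\cdots\psi(u_{1})^{-1}\,w\,u_{1}\cdots u_{p-1}$, i.e. the same shape with $u$ replaced by $u_{1}\cdots u_{p-1}$. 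Iterating $p$ times strips off all the conjugating layers and ends at the word $w$.

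Two observations make the induction valid. First, each intermediate word $\psi(u_{j})^{-1}\cdots\psi(u_{1})^{-1}\,w\,u_{1}\cdots u_{j}$ is the central subword of the fixed geodesic $\psi(u)^{-1}wu$ obtained by deleting equally many letters from each end, and a subword of a geodesic is geodesic; this guarantees that every $\psi$-cyclic shift is applied to a genuine geodesic (of length $2j + l(w) > 1$ for $j \geq 1$), so the operation is well defined at each step and no cancellation occurs beyond the single intended free reduction. Second, $u$ must be non-empty: otherwise $v =_{\RAAG} w$ would give $l(v) = |\pi(v)| \leq l(w)$, contradicting $l(w) < l(v)$. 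Hence after $p \geq 1$ steps we reach the geodesic $w$ with $l(w) < l(v)$, witnessing that $v$ is not $\psi$-CR.

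I expect the main obstacle to be exactly the bookkeeping that keeps every intermediate word geodesic, since the $\psi$-cyclic shift is only defined on geodesic inputs; the central-subword observation settles this cleanly, and it depends only on the opening normalization of $v$ to the single geodesic word $\psi(u)^{-1}wu$ via commutation relations.
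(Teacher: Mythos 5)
Your proof is correct and takes essentially the same route as the paper: the paper's proof simply rewrites $v$ as the geodesic $\psi(u)^{-1}wu$ and performs a single $\psi$-cyclic shift of the entire suffix $u$, so that $\psi(u)$ cancels against $\psi(u)^{-1}$ and the word collapses to $w$, which is shorter. Your letter-by-letter peeling, together with the checks that central subwords of a geodesic are geodesic and that $u$ is non-empty, is just a more granular implementation of that one shift.
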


\begin{proof}
We can perform a $\psi$-cyclic shift from $v$ to $w$ as follows:
\[ v =_{G} \psi(u)^{-1}wu \xleftrightarrow{\psi} \psi(u)\cdot \psi(u)^{-1}w =_{G} w.
\]
By assumption $l(w) < l(v)$, so $v$ cannot be $\psi$-cyclically geodesic. 
\end{proof}
\comm{
\begin{rmk}
This doesn't hold for non-length preserving cases. Consider the RAAG $A_{\Gamma} = F_{2} \times F_{2}$ (see \cref{exmp: RAAG F2}), and let $\psi$ be the following order 2 automorphism:
\[ \psi: a \mapsto ac, \quad c \mapsto c^{-1}
\]
(fixing all other vertices). Let $V = aca \in \RAAG$. Then we can $\psi$-cyclic shift the first two letters to get a shorter word:
\[ V = aca \xmapsto{\psi} a\psi^{-1}(ac) = aa
\]
So by definition, $V$ is not $\psi$-CR. However it is clear that $V$ cannot be written in the form $V =_{\RAAG} \psi^{-1}(U)WU$.

\end{rmk}}

\comm{
\begin{lemma}\label{lem:conj and twist}
Let $u = t^{l}U, v=t^{l}V \in A_{\phi}$, and set $\psi = \phi^{l}$. Then
\[ U \sim_{\psi} V \; \Leftrightarrow \; t^{l}U \sim t^{l}V \quad \text{by an element} \; X \in \RAAG
\]

\end{lemma}

\begin{proof}
\comm{
Suppose 
\[ X^{-1}\cdot t^{l}U \cdot X = t^{l}V
\]
holds for some $X \in \RAAG$. This is true if and only if:
\begin{align*}
    t^{-l}\cdot X^{-1}t^{l}UX &= V \\
    \Leftrightarrow \psi(X)^{-1}UX &= V \\
    \Leftrightarrow U \sim_{\psi} V
\end{align*}
}
Let $X \in \RAAG$ be the conjugating element. Then:
\begin{align*}
    u &= X^{-1}vX \\
    \Leftrightarrow t^{l}U &= X^{-1}t^{l}VX \\
    \Leftrightarrow U &= (t^{-l}X^{-1}t^{l})VX \\
    \Leftrightarrow U &= \psi(X)^{-1}VX 
\end{align*}
\end{proof}}

\comm{
\begin{lemma}\label{lem:cycles equal}
Let $\psi$ be a length preserving automorphism. If $U \in \RAAG$ is reduced and $\psi$-CR, any $\psi$-cyclic permutation of $U$ is also $\psi$-CR. 
\end{lemma}

\begin{proof}
Let $U = U_{1}\dots U_{n}$ be reduced and $\psi$-CR word, and consider an arbitrary $\psi$-cyclic permutation:
\[ P = \psi^{k}(U_{i+1})\dots \psi^{k}(U_{n})\psi^{k-1}(U_{1})\dots \psi^{k-1}(U_{i})
\]
\textbf{Claim}: Since $U$ is $\psi$-CR, $|P| = |U| = n$. \\
\myRed{Do we need the length to be same?}\\
\textbf{Proof of Claim}: Since $\psi$ is length preserving, $|P| \leq n$, so we want to show that $|P| \not < n$. Suppose, for a contradiction, that $|P| < n$, so there exists at least one trivial cancellation in $P$. This cancellation must then be one of the following three cases:
\begin{enumerate}
    \item $\psi^{k}(U_{s}) = \psi^{k}(U_{t})^{-1}$ which cancel after shuffling.
    \item $\psi^{k-1}(U_{s}) = \psi^{k-1}(U_{t})^{-1}$ which cancel after shuffling.
    \item $\psi^{k}(U_{s}) = \psi^{k-1}(U_{t})^{-1}$ which cancel after shuffling.
\end{enumerate}
(for some $s,t$). For both Cases 1 and 2, this implies that
\[ U_{s} = U^{-1}_{t}
\]
But this implies that we can write $U$ in the form
\[ U =_{\RAAG} U_{1}\dots U_{s}U_{t}\dots U_{n}
\]
after shuffling, and so $U$ contains a trivial cancellation. This cannot happen since $U$ is reduced. \\
For Case 3, this implies that 
\[ \psi(U_{s}) = U^{-1}_{t}
\]
This means that we can write $U$ as 
\[U =_{\RAAG} U_{t}U_{1} \dots U_{n}U_{s} = \psi(U_{s})^{-1}U_{1} \dots U_{n}U_{s}
\]
In particular, $U$ is not $\psi$-CR, which again gives a contradiction. Since this is all possible cases, this proves the Claim.\\
Now suppose $P$ is not $\psi$-CR. If this were true, we could rewrite $P$ as one of the following 4 forms:
\begin{align*}
    1:& \quad \psi^{k}(U_{s})\cdot V\cdot \psi^{k-1}(U_{t}) \\
    2:& \quad \psi^{k}(U_{s})\cdot V\cdot \psi^{k}(U_{t}) \\
    3:& \quad \psi^{k}(U_{s})\cdot V\cdot \psi^{k+1}(U_{t}) \\
    4:& \quad \psi^{k-1}(U_{s})\cdot V\cdot \psi^{k-1}(U_{t}) 
\end{align*}
for some reduced word $V$ where $|V| < |P|$. For Case 1 (and similarly 3), we have that
\[ \psi^{k}(U_{s}) = \psi^{k}(U_{t})^{-1} \Rightarrow U_{s} = U^{-1}_{t}
\]
In this situation, we can write $P$ as follows:
\[ P = \psi^{k}(U_{i+1}) \dots \mathbf{\psi^{k}(U_{s})}\dots \psi^{k}(U_{n})\psi^{k-1}(U_{1})\dots \mathbf{\psi^{k-1}(U_{t})}\dots \psi^{k-1}(U_{i})
\]
where the letters in bold can shuffle to the ends of the word. In particular,
\[ [U_{s}, U_{j}] = 1 \; \text{for all} \; i+1 \leq j \leq s-1
\]
and 
\[ [U_{t}, U_{l}] = 1 \; \text{for all} \; t+1 \leq l \leq i
\]
This means that we can write $U$ as
\[ U = U_{1} \dots \mathbold{U_{t}}\dots U_{i}U_{i+1}\dots \mathbold{U_{s}}\dots U_{n} =_{\RAAG} U_{1}\dots \mathbold{U_{t}U_{s}}\dots U_{n}
\]
These two letters then cancel since $U_{s} = U^{-1}_{t}$, which contradicts the assumption that $U$ is reduced.\\
For Case 2 (and similarly 4), we have that
\[ \psi^{k}(U_{s}) = \psi^{k+1}(U_{t})^{-1} \Rightarrow U_{s} = \psi(U_{t})^{-1}
\]
In this situation, we can write $P$ as
\[ P = \psi^{k}(U_{i+1})\dots \mathbold{\psi^{k}(U_{s})}\dots  \mathbold{\psi^{k-1}(U_{t})}\dots  \psi^{k}(U_{n})\psi^{k-1}(U_{1})\dots \psi^{k-1}(U_{i})
\]
where the letters in bold can shuffle to the ends of the word. In particular:
\[ [U_{s}, U_{j}] = 1 \; \text{for all} \; i+1 \leq j \leq s-1
\]
and 
\[ [U_{t}, U_{l}] = 1 \; \text{for all} \; t+1 \leq l \leq n
\]
We also have
\[ [\psi(U_{t}), U_{r}] = 1 \; \text{for all} \; 1 \leq r \leq i \Rightarrow [U_{s}, U_{r}] = 1 \; \text{for all} \; 1 \leq r \leq i
\]
since $U_{s} = \psi(U_{t})^{-1}$. Combining this information, we can write $U$ as 
\[ U = U_{1}\dots U_{i}U_{i+1}\dots \mathbold{U_{s}} \dots \mathbold{U_{t}}\dots U_{n} \equiv \mathbold{U_{s}}U_{1}\dots U_{n}\mathbold{U_{t}}
\]
But this implies that $U$ is not $\psi$-CR, which is again a contradiction.\\
Hence in all cases, we get a contradiction, and so $P$ must be $\psi$-CR. 

\end{proof}}
\vspace{-10pt}
We are now able to prove the main theorem of this section.
\twist
\begin{proof}
We start with the reverse implication. Suppose there exists a finite sequence 
\[ u = u_{0} \leftrightarrow u_{1} \leftrightarrow \dots \leftrightarrow u_{n} = v
\]
of $\psi$-cyclic permutations and commutation relations. Then by \cref{cor:twisted perms}, $u_{i} \sim_{\psi} u_{i+1}$ for all $0 \leq i < n$, and so $u \sim_{\psi} v$.
\par
For the forward direction, we can write $u$ in the form $u =_{\RAAG} \psi(x)^{-1}vx$ for some geodesic $x \in X^{\ast}$. Since both $u$ and $v$ are $\psi$-CR, $l(u) = l(v)$, and so $\psi(x)^{-1}vx$ must contain cancellations by \cref{prop:cycle red}. First suppose there exists cancellations between $\psi(x)^{-1}$ and $x$ after commutation relations. Let $x =_{\RAAG} x_{1}x_{2}$ and $\psi(x)^{-1} =_{\RAAG} x^{-1}_{3}x^{-1}_{1}$, and so
\[ u =_{\RAAG} \psi(x)^{-1}vx =_{\RAAG} x^{-1}_{3}x^{-1}_{1}vx_{1}x_{2} =_{\RAAG} x^{-1}_{3}vx_{2}.
\]
In particular, all letter in $x_{1}$ commute with all letters in $v$. Note that
\[ \psi(x)^{-1} =_{\RAAG} \psi(x_{2})^{-1}\psi(x_{1})^{-1} =_{\RAAG} x^{-1}_{3}x^{-1}_{1},
\]
and so $\psi(x_{2})x^{-1}_{3} =_{\RAAG} \psi(x_{1})^{-1}x_{1}$. In this scenario, $u$ and $v$ are related by a sequence of $\psi$-cyclic shifts and commutation relations as follows:
\[ u =_{\RAAG} x^{-1}_{3}vx_{2} \xleftrightarrow{\psi} \psi(x_{2})x^{-1}_{3}v =_{\RAAG} \psi(x_{1})^{-1}x_{1}v =_{\RAAG} \psi(x_{1})vx_{1} \xleftrightarrow{\psi} \psi(x_{1})\psi(x_{1})^{-1}v =_{\RAAG} v.
\]
Otherwise, we assume $x$ is of minimal length, that is, if cancellations can occur between $\psi(x)^{-1}$ and $x$ after commutation relations are applied, then we apply these cancellations. This leaves us to consider whether $v$ cancels fully or not in $\psi(x)^{-1}vx$.

\textbf{Case 1:} $v$ is not fully cancelling in  $\psi(x)^{-1}vx$.\\
Suppose there are cancellations in $\psi(x)^{-1}v$ only. Since $u$ is $\psi$-CR, we must have that $\psi(x)^{-1}$ is fully cancelled. In this case, write $v =_{\RAAG} \psi(x)v_{1}$. Then
\[ u =_{\RAAG} \psi(x)^{-1}vx =_{\RAAG} \psi(x)^{-1}\psi(x)v_{1}x =_{\RAAG} v_{1}x.
\]
Here $u =_{\RAAG} v_{1}x$ and $v =_{\RAAG} \psi(x)v_{1}$, so these words are related to each other by a $\psi$-cyclic shift of $x$. \\
Similarly suppose there are cancellations in $vx$ only. Similar to before, we must have that $x$ is fully cancelled. Let $v =_{\RAAG} v_{2}x^{-1}$. Then
\[ u =_{\RAAG} \psi(x)^{-1}vx =_{\RAAG} \psi(x)^{-1}v_{2}.
\]
Here $u =_{\RAAG} \psi(x)^{-1}v_{2}$ and $v =_{\RAAG} v_{2}x^{-1}$, so these words are related to each other by a $\psi$-cyclic shift of $x^{-1}$. 
\par 
Finally, suppose there exists cancellations in both $\psi(x)^{-1}v$ and $vx$. We note that since $v$ is $\psi$-CR, we can assume $v$ is not of the form $v =_{\RAAG} \psi(x_{s})^{-1}\tilde{v}x_{s}$ by \cref{prop:cycle red}. Hence there exists $\alpha, \beta \in X^{\ast}$, where $\alpha \neq_{\RAAG} \beta$, such that $v$ can be written in the form  
$v =_{\RAAG} \psi(\alpha)^{-1}\tilde{v}\beta$, where $\tilde{v}$ is geodesic and non-empty. By assumption we can write $\psi(x)^{-1} =_{\RAAG} \psi(x_{1})^{-1}\psi(\alpha)$ (cancellation in $\psi(x)^{-1}v)$) and $x =_{\RAAG} \beta^{-1}x_{2}$ (cancellation in $vx$). Now we have 
\[ u =_{\RAAG} \psi(x)^{-1}vx =_{\RAAG} \psi(x_{1})^{-1}\psi(\alpha)\psi(\alpha)^{-1}\tilde{v}\beta \beta^{-1}x_{2}.
\]
Note that $x =_{\RAAG} \alpha^{-1}x_{1} =_{\RAAG} \beta^{-1}x_{2}$. We claim that $\alpha^{-1}$ must be a subword of $x_{2}$, and similarly $\beta^{-1}$ must be a subword of $x_{1}$. Indeed suppose there exists $z \in X$ such that $\alpha^{-1} =_{\RAAG} z\alpha_{z}$. Then either $z$ is a letter from $x_{2}$ or $\beta^{-1}$, which can commute to the left of $\beta^{-1}x_{2}$. If the latter case is true, then $\beta^{-1} =_{\RAAG} z\beta_{z}$, which contradicts our assumption that $v$ is not of the form $v =_{\RAAG} \psi(x_{s})^{-1}\tilde{v}x_{s}$. Therefore any $z$ of this form must come from $x_{2}$, and can commute with all of $\beta^{-1}$, and so $\alpha^{-1}$ is a subword of $x_{2}$. A symmetric argument shows that $\beta^{-1}$ must be a subword of $x_{1}$. 

Since $\beta^{-1}$ is a subword of $x$ which can be moved via commutation relations to the left in $x$, $\psi(\beta)$ must be a subword of $\psi(x)^{-1}$ which can be moved via commutation relations to the right in $\psi(x)^{-1}$. Similarly $\alpha^{-1}$ must be a subword of $x$ which can be moved via commutation relations to the left in $x$. Therefore, we can also write $u$ in the form
\[ u =_{\RAAG} \psi(x_{r})^{-1}\psi(\beta)\psi(\alpha)\psi(\alpha)^{-1}\tilde{v}\beta \beta^{-1}\alpha^{-1}x_{r}.
\]
Since $u$ is $\psi$-CR, we can assume $x_{r}$ is the empty word, and so $u =_{\RAAG} \psi(\beta)\tilde{v}\alpha^{-1}$. We can see $u$ and $v$ are related by the following sequence of $\psi$-cyclic shifts and commutation relations:
\[ u =_{\RAAG} \psi(\beta)\tilde{v}\alpha^{-1}  \xleftrightarrow{\psi} \psi(\alpha)^{-1}\psi(\beta)\tilde{v} =_{\RAAG} \psi(\beta)\psi(\alpha)^{-1}\tilde{v} \xleftrightarrow{\psi} \psi(\alpha)^{-1}\tilde{v}\beta =_{\RAAG} v.
\]
We note this scenario also holds when $\tilde{v}$ is the empty word, which lies in the second case we're about to prove, when $v$ is fully cancelling. 
\par 
\textbf{Case 2:} $v$ is fully cancelling in $\psi(x)^{-1}vx$.\\
First suppose $v$ fully cancels in $vx$. We can assume there are cancellations of the remaining part of $x$ with $\psi(x)^{-1}$, otherwise $u$ would not be $\psi$-CR. We set
\[ x =_{\RAAG} v^{-1}x_{1} \quad \Rightarrow \psi(x)^{-1} =_{\RAAG} \psi(x_{1})^{-1}\psi(v).
\]
Then
\[ u =_{\RAAG} \psi(x)^{-1}vx =_{\RAAG} \psi(x_{1})^{-1}\psi(v)x_{1}.    
\]
We now consider the cancellations remaining in $x$. We first note that if no letters from $x_{1}$ cancel with letters in $\psi(v)$, then $x_{1}$ and $\psi(x_{1})^{-1}$ must be inverses of each other and cancel after commutation relations, since $u$ is $\psi$-CR. We therefore assume $x_{1}$ is of minimal length up to any cancellation with terms in $\psi(x_{1})^{-1}$. Suppose $x_{1}$ fully cancels with $\psi(v)$. Then let $\psi(v) =_{\RAAG} v_{2}x^{-1}_{1}$, and so $u =_{\RAAG} \psi(x_{1})^{-1}v_{2}$. Now
\[ \psi(v) =_{\RAAG} v_{2}x^{-1}_{1} \quad \Rightarrow v =_{\RAAG} \psi^{-1}(v_{2})\psi^{-1}(x^{-1}_{1}),
\]
and so $u$ and $v$ are related by a sequence of $\psi$-cyclic shifts and commutation relations:
\[v =_{\RAAG} \psi^{-1}(v_{2})\psi^{-1}(x^{-1}_{1}) \xleftrightarrow{\psi} x^{-1}_{1}\psi^{-1}(v_{2}) \xleftrightarrow{\psi} v_{2}x^{-1}_{1} \xleftrightarrow{\psi} \psi(x^{-1}_{1})v_{2} =_{\RAAG} u.
\]
If instead, $\psi(v)$ fully cancels in $\psi(v)x_{1}$, then we have
\[ x_{1} =_{\RAAG} \psi(v)^{-1}x_{11} \Rightarrow \psi(x_{1})^{-1} =_{\RAAG} \psi(x_{11})^{-1}\psi^{2}(v),
\]
and hence $u =_{\RAAG} \psi(x_{11})^{-1}\psi^{2}(v)x_{11}$. The result then follows by reverse induction on the length of $x$.
\par 
The case where $v$ fully cancels in $\psi(x^{-1})v$, follows a similar proof. We already gave a proof in Case 1 for when $v$ cancels in both $\psi(x)^{-1}v$ and $vx$, and so in all cases, $u$ and $v$ are related by a sequence of $\psi$-cyclic permutations and commutation relations.
\end{proof}
\comm{
\subsection{Non-length preserving cases}\label{sec:non length p}
\vspace{5pt}
We mention briefly how the results from above can be extended to all finite order automorphisms. For $\RAAG = \langle X \rangle$, let $v \in X^{\ast}$ be $\psi$-cyclically reduced for some finite order $\psi \in \mathrm{Aut}(\RAAG)$.
\begin{enumerate}
    \item If $\psi$ is length preserving, then any word $w$ which can be obtained from $v$ via a sequence of $\psi$-cyclic permutations and commutation relations must also be $\psi$-cyclically reduced.
    \item If $\psi$ is non-length preserving, we can find words $w$ from $v$ via $\psi$-cyclic shifts and commutation relations such that $l(\mathrm{Geod}(w)) > l(v)$. In particular, $w$ is not $\psi$-cyclically reduced.
\end{enumerate}
To extend \cref{cor:sequence} for all finite order automorphisms, we need to also include free reduction in any sequence between twisted conjugate words, since any word in the sequence could increase in length after $\psi$-cyclic shifts. The definitions and results from Section 3 follow similarly as before, by adding $\mathrm{Geod}$ notation when required. 
\twistext}
\subsection{Twisted cyclic reduction: inversions}
\vspace{5pt}
When considering standard cyclic reduction in a RAAG $\RAAG = \langle X \rangle$, we recall that any word $w \in X^{\ast}$ which is cyclically reduced cannot have the form $w = x_{1}a_{i}^{\pm 1}x_{2}a_{i}^{\mp 1}x_{3}$, where all letters of subwords $x_{1}$ and $x_{3}$ commute with $a_{i}$. For twisted cyclic reduction, however, we have a different situation. In particular, the reverse direction of \cref{prop:cycle red} does not necessarily hold.
\begin{exmp}\label{exmp: line order 2 auto}
Let $\RAAG$ be the RAAG with defining graph 
\[\begin{tikzcd}
	a & b & c & d.
	\arrow[no head, from=1-1, to=1-2]
	\arrow[no head, from=1-2, to=1-3]
	\arrow[no head, from=1-3, to=1-4]
\end{tikzcd}\]
Let $\psi \colon a \leftrightarrow d, \; b \leftrightarrow c$ be a reflection, and consider the geodesic word $w = c^{-1}aac$. Then $w \not =_{\RAAG} \psi(u)^{-1}vu$ for any shorter word $v \in X^{\ast}$, where $\psi(u)^{-1}vu$ is geodesic, but $w$ is not $\psi$-CR. This can be seen by the following sequence of $\psi$-cyclic shifts and commutation relations of $w$:
\[ w = c^{-1}aac \xleftrightarrow{\psi} aac\cdot \psi^{-1}(c^{-1}) = aacb^{-1} =_{\RAAG} b^{-1}aac \xleftrightarrow{\psi} aac\cdot \psi^{-1}(b^{-1}) = aacc^{-1} =_{\RAAG} aa.
\]
However, $w$ is a $\psi$-cyclic geodesic, since all $\psi$-cyclic permutations of $w$ are geodesic:
\[ c^{-1}aac \xleftrightarrow{\psi} bc^{-1}aa \xleftrightarrow{\psi} dbc^{-1}a \xleftrightarrow{\psi} ddbc^{-1} \xleftrightarrow{\psi} b^{-1}ddb \xleftrightarrow{\psi} cb^{-1}dd \xleftrightarrow{\psi} acb^{-1}d \xleftrightarrow{\psi} aacb^{-1} \xleftrightarrow{\psi} c^{-1}aac.
\]

\end{exmp}

\comm{
\begin{exmp}
Recall \cref{exmp: RAAG F2}, and consider the word $w = b^{-1}dbd$. Then clearly $w \not =_{\RAAG} \psi(u)^{-1}vu$, but $w$ is not $\psi$-CR. Indeed, we can compute the following $\psi$-cyclic permutations and commutation relations to get a shorter word:
\[ w = b^{-1}dbd \xleftrightarrow{\psi} dbda^{-1} =_{\RAAG} a^{-1}dbd \xleftrightarrow{\psi} aa^{-1}db = db
\]
\end{exmp}}
\vspace{-10pt}
We can however show the reverse direction of \cref{prop:cycle red} holds for inversions. We now show that if $\psi$ is an inversion, then all $\psi$-cyclic geodesics are necessarily $\psi$-CR.
\begin{lemma}\label{twist cyc geos}
    Let $\RAAG = \langle X \rangle$ and let $\psi \in \mathrm{Aut}(\RAAG)$ be a composition of inversions. Let $g \in \RAAG$ be arbitrary and let $w \in X^{\ast}$ be a geodesic representing $g$, such that $w$ is a $\psi$-cyclic geodesic. Then any geodesic words which represent $g$ are also $\psi$-cyclic geodesics. 
\end{lemma}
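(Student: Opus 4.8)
The plan is to reduce to a single commutation move between geodesics, recast the $\psi$-cyclic geodesic condition as a condition on group elements, and then settle a short local estimate in which the only use of the hypothesis is that an inversion fixes the underlying vertex of each generator.

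First I would invoke the standard fact that any two geodesic words for the same element of $\RAAG$ are related by a finite sequence of transpositions of adjacent commuting letters (no free reductions are needed, since both are geodesic). It therefore suffices to treat one such move: write $w = \alpha x y \beta$ and $w' = \alpha y x \beta$ with $x,y \in X$ commuting and $\alpha,\beta \in X^{\ast}$, and show that if $w$ is a $\psi$-cyclic geodesic then so is $w'$. Next I would set up the reformulation. As $\psi$ is a composition of inversions it satisfies $\psi^{2} = \mathrm{id}$ and $\psi(s) \in \{s, s^{-1}\}$ for each generator $s$; hence $\psi$ is length preserving and $\psi(s)$ commutes with exactly the same generators as $s$. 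For any geodesic $z = z_{1}\cdots z_{n}$ representing $g$, every $\psi$-cyclic permutation (\cref{defn:cyc perm}) equals, up to applying $\psi$ to the whole word, one of the words $P_{i}(z) := z_{i+1}\cdots z_{n}\,\psi(z_{1})\cdots\psi(z_{i})$ (the cases $k=0$ and $k=1$); since $\psi$ preserves length, $z$ is a $\psi$-cyclic geodesic iff every $P_{i}(z)$ is geodesic. Moreover $P_{i}(z)$ has length $n$ and represents $u^{-1}g\,\psi(u)$, where $u =_{\RAAG} z_{1}\cdots z_{i}$, so $P_{i}(z)$ is geodesic iff $|u^{-1}g\,\psi(u)| = |g|$, a condition depending only on the group element $u$.

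With this in hand I would compare the prefixes of $w$ and $w'$: their prefix group elements agree except in the one slot $\alpha x$ (for $w$) versus $\alpha y$ (for $w'$), while $\alpha$ and $\alpha x y$ occur as prefixes of both. Granting that $w$ is a $\psi$-cyclic geodesic, the only remaining point is to verify $|(\alpha y)^{-1}g\,\psi(\alpha y)| = |g|$, knowing that the length-$n$ words $P_{0} = xy\beta\psi(\alpha)$ and $P_{2} = \beta\psi(\alpha)\psi(x)\psi(y) = \beta\,\psi(\alpha x y)$ are geodesic (these being the permutations of $w$ at the prefixes $\alpha$ and $\alpha x y$).

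The main content is this local step. Consider the length-$n$ word $P' = x\beta\psi(\alpha)\psi(y)$, which represents $(\alpha y)^{-1}g\,\psi(\alpha y)$. If $P'$ were not geodesic, then — since $\beta\psi(\alpha)$ is a subword of the geodesic $P_{0}$ and hence geodesic, and since $x$ and $y$ (so also $x$ and $\psi(y)$) do not share a vertex — any length-reducing cancellation must pair either the leading $x$ with a letter of $\beta\psi(\alpha)$, or the trailing $\psi(y)$ with a letter of $\beta\psi(\alpha)$. In the first case, because $y$ commutes with $x$, the same cancelling pair already appears in $P_{0} = xy\beta\psi(\alpha)$; in the second, because $\psi(x)$ commutes with $y$ (this is where the hypothesis enters: an inversion preserves the vertex of $x$, so $\psi(x) \in \{x, x^{-1}\}$ still commutes with $y$), the same cancelling pair appears in $P_{2} = \beta\psi(\alpha)\psi(x)\psi(y)$. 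Either way we contradict the geodesicity of $P_{0}$ or of $P_{2}$, so $P'$ is geodesic and the step is done. I expect the delicate points to be organising the reformulation of the second paragraph cleanly (in particular the observation that geodesicity of $P_{i}(z)$ depends only on the group element $u$) and the bookkeeping in this final case analysis; the essential feature exploited is that inversions preserve commutation relations, which is precisely what fails for graph automorphisms such as the reflection of \cref{exmp: line order 2 auto}, explaining why the statement is special to compositions of inversions.
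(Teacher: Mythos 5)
Your proof is correct and follows essentially the same route as the paper's: reduce to a single transposition of adjacent commuting letters, observe that the only problematic $\psi$-cyclic permutation is the one whose cut separates the swapped pair, and rule out any cancelling pair there by transporting it into a $\psi$-cyclic permutation of the original word, with the inversion hypothesis entering at exactly the same point (inversions preserve vertices, hence commutation, which is what fails for graph automorphisms). Your reformulation via prefix group elements is a tidier way of dispatching the paper's easy cases (cuts away from the swapped pair, its Cases 1 and 2), but the substance of the argument is the same.
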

\vspace{-10pt}
The proof is similar to Lemma 3.8 of \cite{Ferov2016}, which considers the case where $\psi \in \mathrm{Aut}(\RAAG)$ is trivial.
\vspace{-10pt}
\begin{proof}
    Let $w = w_{1}\dots w_{n}$, where each $w_{i} \in X$, and suppose $[w_{i}, w_{i+1}] = 1$ for some $i \in \{1, \dots n-1\}$, where $w_{i} \neq w_{i+1}$. Consider the word $w' = w_{1} \dots w_{i+1}w_{i}w_{i+2}\dots w_{n} =_{\RAAG} w$. Let $w''$ be a $\psi$-cyclic permutation of $w'$. We claim that $w''$ is geodesic. Since $\psi$ has order two, we only have three cases to consider:
    \begin{enumerate}
        \item $w'' = \psi(w_{j+1})\dots \psi(w_{i+1})\psi(w_{i})\psi(w_{i+2})\dots \psi(w_{n})w_{1}\dots w_{j}$, where $j < i$.
        \item $w'' = \psi(w_{j+1})\dots \psi(w_{n})w_{1}\dots w_{i+1}w_{i}w_{i+2}\dots w_{j}$, where $j > i$.
        \item $w'' = \psi(w_{i})\psi(w_{i+2}) \dots \psi(w_{n})w_{1}\dots w_{i-1}w_{i+1}$.
    \end{enumerate}
For the first case, we see that $w'' =_{\RAAG} \psi(w_{j+1})\dots \psi(w_{i})\psi(w_{i+1})\psi(w_{i+2})\dots \psi(w_{n})w_{1}\dots w_{j}$, which is a $\psi$-cyclic permutation of $w$. This must be geodesic since $w$ is a $\psi$-cyclic geodesic. The second case follows similarly. For the third case, we note that the subword $s = \psi(w_{i+2}) \dots \psi(w_{n})w_{1}\dots w_{i-1}$ must be geodesic, since it is a subword of $p = \psi(w_{i+1})\psi(w_{i+2}) \dots \psi(w_{n})w_{1}\dots w_{i-1}w_{i}$, which is a $\psi$-cyclic permutation of $w$. If $w''$ is not geodesic, then either $\psi(w_{i})$ or $w_{i+1}$ cancel with a letter in the subword $s$ after applying commutation relations, or $\psi(w_{i}) = w^{-1}_{i+1}$, which cancel after applying commutation relations.
\par 
First suppose $\psi(w_{i})$ cancels with $\psi(w_{k})$, where $k \in \{i+2, \dots, n\}$, after applying commutation relations. Then $w_{i}$ can cancel with $w_{k}$ in $w$, and so $w$ is not geodesic, giving a contradiction. Now suppose $\psi(w_{i})$ cancels with $w_{l}$, where $l \in \{1, \dots, i-1\}$, after applying commutation relations. If we consider the $\psi$-cyclic permutation $q = \psi(w_{i})\psi(w_{i+1}) \dots \psi(w_{n})w_{1}\dots w_{i-1}$ of $w$, then $q$ is not geodesic, again giving a contradiction. The case for $w_{i+1}$ cancelling with letters in $s$ follows similarly.
\par 
Finally, suppose $\psi(w_{i}) = w^{-1}_{i+1}$ which cancel after applying commutation relations. Since $\psi$ is a composition of inversions, we can assume that $\psi(w_{i}) = w^{\pm 1}_{i}$. If $\psi$ acts as the identity on $w_{i}$, then our original word $w$ would not be geodesic. Otherwise $\psi(w_{i}) = w^{-1}_{i} = w^{-1}_{i+1}$, which contradicts our original assumption that $w_{i} \neq w_{i+1}$. Indeed if this was the case, $w = w'$. This completes the proof.
\end{proof}
We note that the final case of this proof does not hold if $\psi$ is a graph automorphism. This can be seen in \cref{exmp: line order 2 auto}, 
where the word $w = c^{-1}aac$ is a $\psi$-cyclic geodesic, but is not $\psi$-CR. 
\begin{cor}\label{form:inversions}
    Let $\RAAG = \langle X \rangle$, and let $\psi \in \mathrm{Aut}(\RAAG)$ be a composition of inversions. Then any geodesic $v \in X^{\ast}$ is $\psi$-CR if and only if $v$ cannot be written in the form 
\[ v =_{\RAAG} \psi(u)^{-1}wu,
\]
where $l(w) < l(v)$ and $\psi(u)^{-1}wu \in X^{\ast}$ is geodesic. 
\end{cor}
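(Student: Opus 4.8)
The statement is a biconditional, and one implication is already in hand: if $v =_{\RAAG} \psi(u)^{-1}wu$ is geodesic with $l(w) < l(v)$, then \cref{prop:cycle red} shows $v$ is not $\psi$-CR. Taking the contrapositive gives the ``only if'' direction immediately. The work is therefore entirely in the converse: assuming $v$ is not $\psi$-CR, produce an honest \emph{geodesic} factorisation $v =_{\RAAG}\psi(u)^{-1}wu$ with $l(w)<l(v)$. The plan is to route this through the notion of a $\psi$-cyclic geodesic, upgrading \cref{twist cyc geos} into the equivalence ``$v$ is $\psi$-CR if and only if $v$ is a $\psi$-cyclic geodesic'', valid for compositions of inversions.

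First I would prove that equivalence. One direction is formal: if some $\psi$-cyclic permutation of $v$ fails to be geodesic, then reducing it yields a strictly shorter word, so $v$ is not $\psi$-CR. For the converse I use that a composition of inversions is an involution, so the $\psi$-cyclic permutations of a fixed geodesic $v$ form the finite explicit family of \cref{defn:cyc perm}; a short computation shows this family is closed under taking further $\psi$-cyclic permutations, while \cref{twist cyc geos} shows that being a $\psi$-cyclic geodesic is independent of the chosen geodesic representative. Consequently, starting from a $\psi$-cyclic geodesic, every word reachable by $\psi$-cyclic shifts and commutation relations is again a geodesic of the same length, so no sequence of moves can shorten $v$; hence $v$ is $\psi$-CR. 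In particular, if $v$ is \emph{not} $\psi$-CR, some single $\psi$-cyclic permutation $P$ of $v$ must be non-geodesic.

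Next I analyse such a $P$, mirroring the casework in the proof of \cref{twist cyc geos}. Writing $v = v_1\cdots v_n$, a cancelling pair lying entirely in the $\psi$-image block, or entirely in the un-twisted block, would force a subword of $v$ to be non-geodesic, contradicting that $v$ is geodesic; so the cancellation is a ``cross'' pair $\psi(v_s) = v_t^{-1}$ with $t \le i < s$. Here the hypothesis that $\psi$ is a composition of inversions is essential: it guarantees $v_s$ and $v_t$ are powers of a common generator $a$, which is exactly the step that fails for graph automorphisms (compare \cref{exmp: line order 2 auto}). Tracking which letters must commute to bring the pair together, I can write $v =_{\RAAG} L\,a^{\epsilon}\,M\,a^{\delta}\,R$ with $\epsilon,\delta\in\{\pm1\}$, where $L$ and $R$ commute with $a$ and $\psi(a^{\delta}) = a^{-\epsilon}$.

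Finally I build the geodesic factorisation. Since $L$ and $R$ commute with $a$, one has $v =_{\RAAG} a^{\epsilon}(LMR)a^{\delta}$, so setting $u = a^{\delta}$ and $w = \mathrm{Geod}(LMR)$ gives $\psi(u)^{-1} = a^{\epsilon}$ and $v =_{\RAAG}\psi(u)^{-1} w u$ with $l(w) \le n-2 < l(v)$ (consistent with \cref{cor:twisted perms}). The remaining point, and the main obstacle, is to verify that $a^{\epsilon}\,\mathrm{Geod}(LMR)\,a^{\delta}$ is genuinely geodesic. I would argue this directly from the geodesicity of $v$: a cancellation of the outer $a^{\epsilon}$ (resp.\ $a^{\delta}$) into the middle would surface an $a^{-\epsilon}$ commuting past $L$ (resp.\ an $a^{-\delta}$ past $R$) and thereby exhibit a reduction inside $v$ itself, while the two outer letters can meet each other only if $M$ commutes with $a$, which again shortens $v$ in the fixed-sign case and is harmless (producing $a^{\pm2}$, not a cancellation) in the inverting case. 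Thus no reduction is possible and the factorisation is geodesic, completing the converse. I expect the structural extraction of the $L a^{\epsilon} M a^{\delta} R$ form and this final geodesicity check to be the most delicate parts; the rest is bookkeeping built on \cref{twist cyc geos} and \cref{prop:cycle red}.
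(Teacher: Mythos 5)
Your proof is correct and takes the same route as the paper: the paper's entire proof of this corollary is the one-line citation of \cref{prop:cycle red} and \cref{twist cyc geos}, and your argument (the equivalence of $\psi$-CR with being a $\psi$-cyclic geodesic, obtained from \cref{twist cyc geos} together with closure of $\psi$-cyclic permutations under composition, followed by the cross-pair extraction of the form $\psi(u)^{-1}wu$) is precisely the content that ``immediately'' suppresses. One caveat: your side remark locating the inversions hypothesis in the cross-pair step is misplaced---that extraction works for any length-preserving automorphism (take $u = v_{s}$ directly, and note that $a^{\epsilon}LMRa^{\delta}$ is geodesic simply because it is obtained from the geodesic $v$ by commutation relations alone)---whereas what genuinely fails for graph automorphisms, as \cref{exmp: line order 2 auto} shows, is the implication ``$\psi$-cyclic geodesic $\Rightarrow$ $\psi$-CR'' furnished by \cref{twist cyc geos}.
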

\vspace{-10pt}
\begin{proof}
    This follows immediately by \cref{prop:cycle red} and \cref{twist cyc geos}.
\end{proof}
\comm{
\begin{prop}\label{form:inversions}
Let $\RAAG = \langle X \rangle$, and let $\psi \in \mathrm{Aut}(\RAAG)$ be a composition of inversions. Then any geodesic $v \in X^{\ast}$ is $\psi$-CR if and only if $v$ cannot be written in the form 
\[ v =_{\RAAG} \psi(u)^{-1}wu,
\]
for some geodesics $u,w \in X^{\ast}$, where $l(w) < l(v)$. 
\end{prop}
\begin{proof}
Let $v = v_{1}\dots v_{n}$, and suppose $v$ is not $\psi$-CR. Then by definition there exists a sequence of $\psi$-cyclic permutations and commutation relations to a non-geodesic word. We proceed by induction on the length of this sequence.
\par 
Base case: The result is trivial if $v$ and $w$ are related by commutation relations only, so suppose $v \xleftrightarrow{\psi} w$, where $w$ is not geodesic. Then by \cref{defn:cyc perm}, we can assume $w$ is of the form
\[ w = \psi(v_{i})\dots \psi(v_{n})v_{1}\dots v_{i-1},
\]
for some $1 \leq i < n$. Note that we do not need to consider higher powers of $\psi$, since $\psi$ is of order 2.
\par 
If there exists free reduction after shuffling between letters $\{\psi(v_{i}), \dots, \psi(v_{n})\}$ or $\{v_{1}, \dots,  v_{i-1}\}$, then $v$ would not be geodesic. Hence we can assume there exists free cancellation after shuffling between at least two letters $\psi(v_{s})$ and $v_{t}$, where
\[ w = \psi(v_{i})\dots \psi(v_{s}) \dots \psi(v_{n})v_{1}\dots v_{t} \dots v_{i-1},
\]
for some $i \leq s \leq n, 1 \leq t \leq i-1$. In this case, we can write $v =_{\RAAG} v_{t}x v_{s}$ for some geodesic $x \in X^{\ast}$. Since $\psi(v_{s}) = v_{t}^{-1}$, this is equivalent to $v =_{\RAAG} \psi(v_{s})^{-1}x v_{s}$, and hence $v$ is of the correct form. 
\par 
Now consider a sequence 
\[ v = v_{0} \leftrightarrow v_{1} \leftrightarrow \dots \leftrightarrow v_{s} = w
\]
of $\psi$-cycles and commutation relations to a shorter word $w \in X^{\ast}$. Since $v_{0}$ is not $\psi$-CR, neither is $v_{1}$, so by inductive hypothesis we can assume that
\[ v_{1} = x_{1}\psi\left(a_{i}^{\mp 1}\right)x_{2}a_{i}^{\pm 1}x_{3},
\]
where all letters of $x_{1}$ commute with $\psi(a_{i})$, and all letters of $x_{3}$ commute with $a_{i}$. We note here that if all the letters of $x_{2}$ commute with $a_{i}$ as well, then $a_{i}^{\pm 1}$ commutes with all letters in $v_{1}$. In this case, any $\psi$-cyclic permutation will give either a word of the same form or a non-geodesic word, since the $a_{i}$ terms can shuffle with all letters in the word to cancel. Therefore in general, we can assume this is not the case and $v_{1}$ is the only possible arrangement.
\par 
If $v_{0}, v_{1}$ are related by commutation relations only we're done, so assume $v_{0} \xleftrightarrow{\psi} v_{1}$. Then $v_{0}$ must be one of the following:
\begin{enumerate}
    \item $v_{0} = \psi\left(x_{32}\right)x_{1}\psi\left(a_{i}^{\mp 1}\right)x_{2}a_{i}^{\pm1}x_{31}$ where $x_{3} = x_{31}x_{32}$.
    \par 
    Since $[a_{i}, x_{32}] = 1$ (by assumption $[a_{i}, x_{3}] = 1$), then $[\psi\left(a_{i}\right), \psi\left(x_{32}\right)] = 1$, and so 
    \[ v_{0} =_{\RAAG} \psi\left(a_{i}^{\mp1}\right)\cdot \psi\left(x_{32}\right)x_{1}x_{2}x_{3}\cdot a_{i}^{\pm 1}.
    \]
    \item $v_{0} = \psi\left(x_{22}\right)\psi\left(a_{i}^{\pm1}\right)\psi\left(x_{3}\right)x_{1}\psi\left(a_{i}^{\mp1}\right)x_{21}$ where $x_{2} = x_{21}x_{22}$.
    \par 
    Here the two $\psi(a_{i})$-terms can shuffle and cancel to give a shorter word, which is a contradiction.
    \item $v_{0} = \psi\left(x_{12}\right)a_{i}^{\mp1} \psi\left(x_{2}\right)\psi\left(a_{i}^{\pm1}\right)\psi\left(x_{3}\right)x_{11}$ where $x_{1} = x_{11}x_{12}$. 
    \par
    Similar to Case 1, we can rewrite $v_{0}$ in the form
    \[ v_{0} =_{\RAAG} a_{i}^{\mp1}\cdot \psi\left(x_{12}\right)\psi\left(x_{2}\right)\psi\left(x_{3}\right)x_{11}\cdot\psi\left(a_{i}^{\pm1}\right).
    \]
\end{enumerate}
The remaining possibilities follow symmetric arguments. In particular, for all cases which are geodesic, $v_{0}$ is of the required form. 
\end{proof}}

\comm{
Finally we have the following decision problems for a group $G$.
\begin{defn}
\textbf{The conjugacy problem}, $\mathrm{CP}(G)$: Given two elements $u,v \in G$, decide whether
\[ u \sim v 
\]
i.e. decide if there exists an element $x \in G$ such that $v = x^{-1}ux$
\end{defn}

\begin{defn}
\textbf{The twisted conjugacy problem}, $\mathrm{TCP}_{\phi}(G)$: 
Given two elements $u,v \in G$ and $\phi \in \mathrm{Aut}(G)$, decide whether
\[ u \sim_{\phi} v 
\]
i.e. decide if there exists an element $x \in G$ such that $v = \phi(x)^{-1}ux$. 

\end{defn}
Note that $\mathrm{TCP}_{Id}(G)$ is equivalent to $\mathrm{CP}(G)$. The twisted conjugacy problem has fewer known solutions than the more well-known conjugacy problem. Here is a few examples of groups where the twisted conjugacy problem is solvable:
\begin{itemize}
    \item Free groups (\cite{Free-by-cyclic}, Theorem 1.5)
    \item Houghton's groups (\cite{Cox2017}, Theorem 1)
    \item Thompson's group F (\cite{Thompson}, Theorem 1.2)
\end{itemize}
}

\section{Conjugacy geodesics in virtual graph products}\label{section: conj geos}


In this section we prove the following, which allows us to consider the language of conjugacy geodesics in group extensions.

\conjgeoextensiongeneral
\vspace{-10pt}
We will then use this result in the context of RAAGs to prove \cref{thm:ConjGeo Regular}. Our method follows a similar technique to \cite{Ciobanu2016}, where the authors prove that $\mathsf{ConjGeo}(\RAAG, X)$ is regular for any RAAG (or RACG), with respect to the standard generators.

\begin{defn}
Let $G = \langle X \rangle$, and let $\psi \in \mathrm{Aut}(G)$. For any language $L \subset X^{*}$, let $\mathsf{Cyc}_{\psi}(L)$ denote the \emph{$\psi$-cyclic closure} of $L$, which is the set of all $\psi$-cyclic permutations of words in $L$ (recall \cref{defn:cyc perm}). 
\end{defn}
\vspace{-10pt}
The following result is well known in the case where $\psi$
is the identity map, for regular and other languages (see Lemma 2.1, \cite{Ciobanu2016}).
\begin{prop}\label{lem:automata}
Let $\psi \in \mathrm{Aut}(\RAAG)$ be length preserving. If $L$ is regular, then $\mathsf{Cyc}_{\psi}(L)$ is regular. 
\end{prop}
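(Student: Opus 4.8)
The plan is to reduce to the untwisted cyclic closure (the case $\psi = \mathrm{id}$, which is the known Lemma 2.1 of \cite{Ciobanu2016}) by building a guess-and-verify automaton that ``decodes'' the twist letter by letter. First I would record the structural consequence of length preservation: by \cref{prop:combo len p}, $\psi$ is a composition of inversions and graph automorphisms, so $\psi(x) \in X$ for every $x \in X$; thus $\psi$ restricts to a permutation of the finite inverse-closed set $X$, and has some finite order $m$. Consequently $\psi$ induces a letter-to-letter monoid endomorphism $\hat\psi$ of $X^{\ast}$ (apply $\psi$ to each letter), and for any word $w = x_{1}\cdots x_{n}$ the $\psi$-cyclic permutation of \cref{defn:cyc perm} is literally $\hat\psi^{k}(v)\,\hat\psi^{k-1}(u)$, where $u = x_{1}\cdots x_{i}$ and $v = x_{i+1}\cdots x_{n}$. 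Since $\psi$ is a bijection of $X$, every power $\psi^{j}$ (positive or negative) is again a letter map on $X$.

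Because $\psi$ has order $m$,
\[ \mathsf{Cyc}_{\psi}(L) = \bigcup_{k=0}^{m-1} L_{k}, \qquad L_{k} := \{\,\hat\psi^{k}(v)\,\hat\psi^{k-1}(u) \; : \; uv \in L\,\}, \]
so by closure of regular languages under finite union (\cref{lem: closure}) it suffices to show each $L_{k}$ is regular. Fix $k$ and a finite state automaton $M = (Q, X, \mu, A, q_{0})$ recognising $L$. The condition $uv \in L$ is equivalent to the existence of a state $p$ with $\mu(q_{0}, u) = p$ and $\mu(p, v) \in A$. I would build an automaton for $L_{k}$ that, on input $w$, nondeterministically guesses $p$ and a factorisation $w = VU$ (intended as $V = \hat\psi^{k}(v)$ and $U = \hat\psi^{k-1}(u)$), and verifies both conditions by running $M$ on the \emph{decoded} letters: in the first phase, on reading a letter $y$ it feeds $\psi^{-k}(y)$ to a copy of $M$ started at $p$, requiring the run to lie in $A$ at the guessed split point; in the second phase, on reading $y$ it feeds $\psi^{-(k-1)}(y)$ to a copy of $M$ started at $q_{0}$, and accepts iff this run ends at $p$. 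The state set $Q \times \{1,2\} \times Q$ (recording $p$, the phase, and the current simulated state) is finite, the switch between phases is an $\epsilon$-transition enabled only from accepting states of the first simulation, and the empty-$u$ / empty-$v$ factorisations are handled by permitting the switch at either end. This automaton recognises exactly $L_{k}$.

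The only real content is the observation in the first paragraph: length preservation forces $\psi$ to act as a mere relabelling of the alphabet, so the twists $\hat\psi^{\pm j}$ can be undone on the fly inside the automaton without any extra memory. I expect the main obstacle to be purely bookkeeping — correctly interleaving the two simulated runs and covering the boundary factorisations where $u$ or $v$ is empty — rather than anything conceptual. It is worth flagging why the hypothesis cannot be dropped: for a non-length-preserving $\psi$ the map $\hat\psi^{k}$ would have to be replaced by $\mathrm{Geod}(\psi^{k}(\cdot))$, which is not letter-to-letter and cannot in general be tracked by a finite automaton, so the argument genuinely uses that $\psi$ is length preserving.
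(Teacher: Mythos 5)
Your proposal is correct and takes essentially the same approach as the paper: the paper likewise splits $\mathsf{Cyc}_{\psi}(L)$ by the power $k$, undoes the twist by applying the letter maps $\psi^{-k}$ and $\psi^{-(k-1)}$ to edge labels of copies of $M$, and handles the split point by ranging over (rather than nondeterministically guessing) the intermediate state pair $(q,q')$, joining the two relabelled copies with an $\epsilon$-transition. Your product-state, guess-and-verify NFA is just a compact packaging of the paper's union $\widehat{M} = \{S,E\} \cup \bigcup_{k} M_{1,k}(q) \cup \bigcup_{i,k} M_{i,k}(q,q')$, and your explicit appeal to \cref{prop:combo len p} to justify that $\psi$ acts letter-to-letter and has finite order is a point the paper uses implicitly.
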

\vspace{-10pt}
We acknowledge here a recent paper \cite{Mahalingam2022}, where a similar statement is shown for twisting by antimorphic involutions in semigroups.
\begin{proof} 
The idea of this proof is to take copies of the automaton accepting $L$, and adjust edge labels with respect to $\psi$, to account for $\psi$-cyclic shifts of letters.
\par 
Let $M$ be a finite state automaton accepting $L$, with state set $Q$ and initial state $q_{0}$. A word $w=x_{1}x_{2}\dots x_{n}$, where $x_{i}$ are letters, is in $\mathsf{Cyc}_{\psi}(L)$ if and only if there exists a $\psi$-cyclic permutation 
\[ v = \psi^{k}(x_{i})\dots \psi^{k}(x_{n})\psi^{k-1}(x_{1})\dots \psi^{k-1}(x_{i-1}) \in L
\]
of $w$, for some $1 \leq i \leq n, \; 0 \leq k \leq m-1$, where $m$ is the order of $\psi$. In other words, $w$ is in $\mathsf{Cyc}_{\psi}(L)$ if and only if there exist states $q, q' \in Q$, with $q'$ accepting, such that $M$ contains a path from $q_{0}$ to $q$ labelled $\psi^{k}(x_{i})\dots \psi^{k}(x_{n})$, and a path from $q$ to $q'$ labelled $\psi^{k-1}(x_{1})\dots \psi^{k-1}(x_{i-1})$. We first define an automaton for each value of $i$ and $k$ as follows.
\par 
For each $i,k$, we take two copies of the states and transitions of $M$, which we denote by $\psi^{-k}(M)$ and $\psi^{-(k-1)}(M)$. For all edge labels $e \in \psi^{-k}(M)$ and $f \in \psi^{-(k-1)}(M)$, we apply a homomorphism
\[ e \mapsto \psi^{-k}(e), \quad f \mapsto \psi^{-(k-1)}(f).
\]
We define $M_{i,k}(q,q')$ to be the union of $\psi^{-k}(M)$ and $\psi^{-(k-1)}(M)$, where we add an $\epsilon$-transition from the state $q'$ in $\psi^{-(k-1)}(M)$ to the state $q_{0}$ in $\psi^{-k}(M)$. The start state of $M_{i,k}(q,q')$ is the state $q \in \psi^{-(k-1)}(M)$, and the single accept state is the state $q \in \psi^{-k}(M)$. With this construction, $M_{i,k}(q,q')$ will accept $w$ (see Figure \ref{fig:my_label}). 

\begin{figure}[h]
    \centering
\begin{tikzpicture}
    [scale=.8, auto=left,every node/.style={circle}]]
    \node[draw, initial] (qstart) at (0, 0) {\(q\)};
    \node[draw] (p1) at (1, 1) {};
    \node[draw] (p2) at (2, 2) {};
    \node[scale=0.5] (invisul1) at (3.3, 3.3) {};
    \node[scale=0.5] (invisul2) at (3.7, 3.7) {};
    \node[draw] (q'l) at (5, 5) {\(q'\)};
    \node[draw] (q0r) at (7, 5) {\(q_0\)};
    \node[draw] (p3) at (8, 4) {};
    \node[draw] (p4) at (9, 3) {};
    \node[scale=0.5] (invisur1) at (10.3, 1.7) {};
    \node[scale=0.5] (invisur2) at (10.7, 1.3) {};
    \node[draw] (p5) at (11, -1) {};
    \node[draw] (p6) at (10, -2) {};
    \node[scale=0.5] (invislr1) at (8.7, -3.3) {};
    \node[scale=0.5] (invislr2) at (8.3, -3.7) {};
    \node[draw] (q'r) at (7, -5) {\(q'\)};
    \node[draw] (q0l) at (5, -5) {\(q_0\)};
    \node[draw] (p7) at (4, -4) {};
    \node[draw] (p8) at (3, -3) {};
    \node[scale=0.5] (invisll1) at (1.7, -1.7) {};
    \node[scale=0.5] (invisll2) at (1.3, -1.3) {};
    \node[draw, double] (qend) at (12, 0)  {\(q\)};
    \draw[->, >=stealth] (qstart) edge["\(x_1\)"] (p1);
    \draw[->, >=stealth] (p1) edge["\(x_2\)"] (p2);
    \draw[->, >=stealth] (p2) to (invisul1);
    \draw[dotted] (invisul1) to (invisul2);
    \draw[->, >=stealth] (invisul2) edge["\(x_{i-1}\)"] (q'l);
    \draw[->, >=stealth, out=45, in=135] (q'l) edge["\(\epsilon\)"] (q0r);
    \draw[->, >=stealth] (q0r) edge["\(x_{i}\)"] (p3);
    \draw[->, >=stealth] (p3) edge["\(x_{i+1}\)"] (p4);
    \draw[->, >=stealth] (p4) to (invisur1);
    \draw[dotted] (invisur1) to (invisur2);
    \draw[->, >=stealth] (invisur2) edge["\(x_{n}\)"] (qend);
    \draw[->, >=stealth] (qend) edge[near start, "\(\psi(x_{1})\)"] (p5);
    \draw[->, >=stealth] (p5) edge[near start, "\(\psi(x_{2})\)"] (p6);
    \draw[->, >=stealth] (p6) to (invislr1);
    \draw[dotted] (invislr1) to (invislr2);
    \draw[->, >=stealth] (invislr2) edge[near start, "\(\psi(x_{i-1})\)"] (q'r);
    \draw[->, >=stealth] (q0l) edge[near end, "\(\psi(x_{i})\)"]  (p7);
    \draw[->, >=stealth] (p7) edge[near end, "\(\psi(x_{i+1})\)"] (p8);
    \draw[->, >=stealth] (p8) to (invisll1);
    \draw[dotted] (invisll1) to (invisll2);
    \draw[->, >=stealth] (invisll2) edge[near end, "\(\psi(x_{n})\)"] (qstart);
\end{tikzpicture}

\comm{
\begin{tikzpicture}
        \node[state] at (2,0) (a) {$q_{0}$} ;
        \node[state, initial] at (0,2) (b) {$q$} ;
        \node[state] at (2,4) (c) {$q^{'}$};
        \node[state] at (6,4) (d) {$q_{0}$};
        \node[state, accepting] at (8,2) (e) {$q$};
        \node[state] at (6,0) (f) {$q^{'}$};
        \draw(a) edge[left] node{$\psi(w_{i})\dots \psi(w_{n})$} (b);
        \draw (b) edge[left] node{$w_{1}\dots w_{i-1}$} (c);
        \draw (c) edge[bend left, above] node{$\epsilon$} (d);
        \draw (d) edge[right] node{$w_{i}\dots w_{n}$} (e);
        \draw (e) edge[right] node{$\psi(w_{1})\dots \psi(w_{i-1})$} (f);
\end{tikzpicture}}
    \caption{Automata $M_{i,k}(q,q')$}
    \label{fig:my_label}
\end{figure}
\par 
We now construct an automaton $\widehat{M}$ as follows. First construct all possible $M_{i,k}(q,q')$ for $1 \leq i \leq n$, $0 \leq k \leq m-1$. Add two new states $S$ and $E$, which will become our start and end states for $\widehat{M}$. We add $\epsilon$-transitions from $S$ to each start state in $M_{i,k}(q,q')$, for all possible $i,k$, and similarly add  $\epsilon$-transitions from $E$ to each end state in $M_{i,k}(q,q')$. With this construction, $\widehat{M}$ will accept $w = x_{1}\dots x_{n}$ for all possible $i,k$, and so will accept $w$ for all $w \in \mathsf{Cyc}_{\psi}(L)$. $\widehat{M}$ is a finite state automaton, since $\psi$ is of finite order, hence $\mathsf{Cyc}_{\psi}(L)$ is regular. 
\end{proof}

We now consider regularity for twisted conjugacy languages (recall \cref{defn:twisted conjugacy languages}).
\begin{prop}\label{prop:cycgeo}
Let $\psi \in \mathrm{Aut}(G)$ be length preserving. If $\mathsf{Geo}(G,X)$ is regular, then the language $\mathsf{CycGeo}_{\psi}(G, X)$ is regular.
\end{prop}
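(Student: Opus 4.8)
Show $\mathsf{CycGeo}_{\psi}(A_\Gamma, X)$ is regular for length-preserving $\psi$.

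Let me think about what this language is.The plan is to realise $\mathsf{CycGeo}_{\psi}(A_{\Gamma}, X)$ as the complement of the $\psi$-cyclic closure of the non-geodesic words, and then invoke \cref{lem:automata}. Write $\mathsf{NonGeo} := X^{\ast} \setminus \mathsf{Geo}(A_{\Gamma}, X)$. It is well known that the language $\mathsf{Geo}(A_{\Gamma}, X)$ of geodesics in a RAAG is regular (see, e.g., \cite{Ciobanu2016}), so $\mathsf{NonGeo}$ is regular by closure under complement (\cref{lem: closure}).

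The key structural fact I would establish first is that the relation ``$v$ is a $\psi$-cyclic permutation of $w$'' is symmetric. This follows by a direct computation from \cref{defn:cyc perm}: if $v = \psi^{k}(x_{i+1})\cdots\psi^{k}(x_{n})\psi^{k-1}(x_{1})\cdots\psi^{k-1}(x_{i})$ is the $\psi$-cyclic permutation of $w = x_{1}\cdots x_{n}$ with parameters $(i,k)$, then the $\psi$-cyclic permutation of $v$ with parameters $(n-i,\, 1-k \bmod m)$ returns $\psi^{0}(w) = w$, where $m$ is the order of $\psi$ (finite by \cref{prop:combo len p}). This is the word-level analogue of the reversibility of a single $\psi$-cyclic shift noted after \cref{def:shifts moves}; since $\psi$ is length preserving, each $\psi^{k}(x_{j})$ is a single letter, so these really are words of the same length.

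Using symmetry, I would then prove the identity
\[ \mathsf{CycGeo}_{\psi}(A_{\Gamma}, X) = X^{\ast} \setminus \mathsf{Cyc}_{\psi}(\mathsf{NonGeo}). \]
Indeed, $w$ fails to be a $\psi$-cyclic geodesic precisely when some $\psi$-cyclic permutation $v$ of $w$ is non-geodesic; by symmetry this is equivalent to $w$ being a $\psi$-cyclic permutation of the non-geodesic word $v$, i.e.\ to $w \in \mathsf{Cyc}_{\psi}(\mathsf{NonGeo})$. One checks that $w$ is itself a $\psi$-cyclic permutation of $w$, taking $(i,k)=(n,1)$, so the condition correctly subsumes geodesy of $w$ itself.

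Finally, since $\psi$ is length preserving and $\mathsf{NonGeo}$ is regular, \cref{lem:automata} shows that $\mathsf{Cyc}_{\psi}(\mathsf{NonGeo})$ is regular, and its complement is regular by \cref{lem: closure}, completing the argument. The only genuinely twisted ingredient is the symmetry verification of the second step: in the untwisted setting it is immediate, whereas here one must track the powers of $\psi$ across the two blocks of the cyclic permutation. I expect this to be the main (though routine) obstacle, with everything else reducing to the closure properties of regular languages and \cref{lem:automata}.
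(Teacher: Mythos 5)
Your proposal is correct and follows essentially the same route as the paper: both establish the identity $\mathsf{CycGeo}_{\psi}(\RAAG,X) = X^{\ast} \setminus \mathsf{Cyc}_{\psi}\left(X^{\ast}\setminus \mathsf{Geo}(\RAAG,X)\right)$ and then conclude via regularity of $\mathsf{Geo}(\RAAG,X)$, \cref{lem: closure} and \cref{lem:automata}. The only difference is that you explicitly verify the symmetry of the $\psi$-cyclic permutation relation (with the parameter computation $(n-i,\,1-k \bmod m)$), a point the paper's two-inclusion argument treats as immediate ``by definition''; this is a welcome clarification but not a different proof.
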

\vspace{-10pt}
\begin{proof}
We prove that
\[ \mathsf{CycGeo}_{\psi} = X^{*} \setminus \mathsf{Cyc}_{\psi}(X^{*}\setminus \mathsf{Geo}).
\]
($\supseteq$): Let $w \in X^{*} \setminus \mathsf{Cyc}_{\psi}(X^{*}\setminus \mathsf{Geo})$, and suppose $w \not \in \mathsf{CycGeo}_{\psi}$. Then there exists a $\psi$-cyclic permutation $w'$ of $w$ such that $w'$ is not geodesic. By definition this means $w \in \mathsf{Cyc}_{\psi}(X^{*}\setminus \mathsf{Geo})$, which is a contradiction. 
\par 
($\subseteq$): Let $w \in \mathsf{CycGeo}_{\psi}$, and suppose $w \in \mathsf{Cyc}_{\psi}(X^{*}\setminus \mathsf{Geo})$. Then there exists a $\psi$-cyclic permutation $w'$ of $w$ such that $w'$ is not geodesic. But since $w \in \mathsf{CycGeo}_{\psi}$, all $\psi$-cyclic permutations of $w$ are geodesic, again giving a contradiction.
\par 
Since $\mathsf{Geo}(\RAAG, X)$ is regular, the result follows by \cref{lem: closure} and \cref{lem:automata}.
\end{proof}

\begin{proof}[Proof of \cref{thm:conjgeo regular extension group}]
    First consider the set 
    \[ S =  \bigcup^{m-1}_{l=0} \{Ut^{l} \mid U \in \mathsf{ConjGeo}_{\phi^{l}}(N, X) \} \subset \mathsf{ConjGeo}\left(G, \widehat{X}\right)
    \]
    For fixed $l$, the languages $\{t^{l}\}$ (singleton language) and $\mathsf{ConjGeo}_{\phi^{l}}(N, X)$ are both regular (by \cref{prop:cycgeo}), and so the concatenation of these languages is regular by \cref{lem: closure}. Since $t$ has finite order, the union of these regular languages across all possible $l$ must also be regular. Let $M$ be the finite state automaton which accepts the language $S$, with state set $Q$ and initial state $q_{0}$. 

    Let $w = w_{1}w_{2} \in \mathsf{ConjGeo}_{\phi^{l}}(N, X)$, for some $w_{1} \in \mathcal{P}(w)$, $w_{2} \in \mathcal{S}(w)$ (see \cref{def:shifts moves}). Then $w_{1}w_{2}t^{l} \in S$, and so there exists states $q_{1}, q_{2}, q_{3} \in Q$, with $q_{3}$ accepting, such that $M$ contains a path from $q_{0}$ to $q_{1}$ labelled $w_{1}$, a path from $q_{1}$ to $q_{2}$ labelled $w_{2}$, and a path from $q_{2}$ to $q_{3}$ labelled $t^{l}$. To account for all words in $\mathsf{ConjGeo}\left(G, \widehat{X}\right)$, we need to adapt our automaton $M$ as follows.

    Note that $w_{1}w_{2}t^{l} =_{G} w_{1}t^{l}\phi^{l}(w_{2}) \in \mathsf{ConjGeo}\left(G, \widehat{X}\right)$ for all possible prefixes and suffixes of a word $w \in \mathsf{ConjGeo}_{\phi^{l}}(N, X)$. To include all possible words of this form $w_{1}t^{l}\phi^{l}(w_{2}) \in \mathsf{ConjGeo}\left(G, \widehat{X}\right)$, then for every path in $M$ from states $q_{0}$ to $q_{3}$ as described above, we add a new state $q_{4}$, a transition from $q_{1}$ to $q_{4}$ labelled $t^{l}$, and a transition from $q_{4}$ to $q_{3}$ labelled $\phi^{l}(w_{2})$ (see \cref{fig:extension conj geo}). 
    \begin{figure}[h]
    \centering
\begin{tikzpicture}
    [scale=.8, auto=left,every node/.style={circle}]]
    \node[draw, initial] (qstart) at (0, 0) {\(q_{0}\)};
    \node[draw] (q1) at (4,0) {$q_{1}$};
    \node[draw] (q2) at (8,0) {$q_{2}$};
    \node[draw, double] (q3) at (12,0) {$q_{3}$};
    \node[draw] (q4) at (8, -3) {$q_{4}$};
    \draw[->, >=stealth] (qstart) edge["\(w_{1}\)"] (q1);
    \draw[->, >=stealth] (q1) edge["\(w_{2}\)"] (q2);
    \draw[->, >=stealth] (q2) edge["\(t^{l}\)"] (q3);
    \draw[->, >=stealth] (q1) edge[swap, "\(t^{l}\)"] (q4);
    \draw[->, >=stealth] (q4) edge[swap, near end, "\(\phi^{l}(w_{2})\)"] (q3);
    \end{tikzpicture}
    \caption{Partial picture of automaton $M'$ which accepts $\mathsf{ConjGeo}\left(G, \widehat{X}\right)$}
    \label{fig:extension conj geo}
    \end{figure}\\
    Our new automaton $M'$, obtained from adding all possible states and transitions as above to $M$, is also a finite state automaton, since $\phi$ is length-preserving, which accepts the language $\mathsf{ConjGeo}\left(G, \widehat{X}\right)$.
\end{proof}
\vspace{-10pt}
We now apply this result in the context of RAAGs.

\begin{cor}\label{lem:conjgeos appearance}
Let $\psi \in \mathrm{Aut}(\RAAG)$ be length preserving. Then $\mathsf{ConjGeo}_{\psi}(\RAAG, X)$ consists precisely of all geodesic $\psi$-CR words.
\end{cor}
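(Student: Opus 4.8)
The plan is to prove the two inclusions separately, leaning on the observation that each of the three moves appearing in \cref{def: CR} — a $\psi$-cyclic permutation, a commutation relation and a free reduction — preserves the $\psi$-conjugacy class of the element represented, together with the fact that for length preserving $\psi$ the first two moves also preserve word length. I would first record a preliminary remark: if $w \in \mathsf{ConjGeo}_{\psi}(\RAAG, X)$, then from the chain $|\pi(w)|_{\psi} \le |\pi(w)| \le l(w)$ the defining equality $l(w) = |\pi(w)|_{\psi}$ forces $l(w) = |\pi(w)|$, so $w$ is automatically geodesic. For the inclusion $\mathsf{ConjGeo}_{\psi} \subseteq \{\text{geodesic } \psi\text{-CR words}\}$, suppose such a $w$ were not $\psi$-CR. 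Then \cref{def: CR} supplies a sequence of $\psi$-cyclic permutations, commutation relations and free reductions carrying $w$ to a geodesic word $w'$ with $l(w') < l(w)$. Each move fixes the $\psi$-conjugacy class (for $\psi$-cyclic permutations this is \cref{cor:twisted perms}; commutation relations and free reductions leave the group element unchanged), so $\pi(w') \in [\pi(w)]_{\psi}$, giving $|\pi(w)|_{\psi} \le |\pi(w')| = l(w') < l(w) = |\pi(w)|_{\psi}$, a contradiction.

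For the reverse inclusion, let $w$ be a geodesic $\psi$-CR word; geodesicity gives $l(w) = |\pi(w)| \ge |\pi(w)|_{\psi}$, so it suffices to prove $l(w) \le |\pi(w)|_{\psi}$. I would pick a geodesic word $u$ representing an element of $[\pi(w)]_{\psi}$ of minimal length, so that $l(u) = |\pi(w)|_{\psi}$. Minimality forces $u$ to be $\psi$-CR: by the class-invariance just used, no sequence of $\psi$-cyclic permutations, commutation relations and free reductions can leave $[\pi(w)]_{\psi}$, hence none can reach a geodesic word shorter than $l(u) = |\pi(w)|_{\psi}$. Now $u$ and $w$ are two $\psi$-CR words that are $\psi$-conjugate, so \cref{cor:sequence} applies: they are related by a finite sequence of $\psi$-cyclic permutations and commutation relations. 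Since $\psi$ is length preserving (\cref{defn: length pres}, so $l(\psi(x)) = l(x)$ on geodesics), every such move keeps the word length fixed, whence $l(u) = l(w)$. Therefore $|\pi(w)|_{\psi} = l(w)$ and $w \in \mathsf{ConjGeo}_{\psi}(\RAAG, X)$.

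The forward inclusion is routine once class-invariance of the three moves is noted. The genuinely delicate point, and where I expect to spend the most care, is the reverse inclusion and its two supporting facts: that a minimal-length twisted-conjugacy representative is automatically $\psi$-CR, and that the length preservation of $\psi$ is exactly what makes the permutation-and-commutation sequence produced by \cref{cor:sequence} collapse $l(u)$ to $l(w)$. Both hinge on the length-preserving hypothesis, which is why the statement is restricted to length preserving automorphisms.
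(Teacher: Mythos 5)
Your proof is correct and takes exactly the route the paper intends: the paper's own proof is the single line ``This follows immediately by \cref{cor:sequence}'', and your argument is precisely the careful expansion of that citation, using \cref{cor:twisted perms} and invariance of the twisted conjugacy class under the three moves for one inclusion, and \cref{cor:sequence} plus length preservation for the other. Nothing in your write-up deviates from or adds beyond what the paper's appeal to \cref{cor:sequence} implicitly requires.
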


\begin{proof}
This follows immediately by \cref{cor:sequence}. 
\end{proof}

\begin{prop}\label{prop:conj and cyc}
Let $\psi \in \mathrm{Aut}(\RAAG)$ be a composition of inversions. Then
\[ \mathsf{ConjGeo}_{\psi}(\RAAG, X) = \mathsf{CycGeo}_{\psi}(\RAAG, X).
\]
\end{prop}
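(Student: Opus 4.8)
The plan is to prove the two inclusions separately, using \cref{lem:conjgeos appearance} to restate membership in $\mathsf{ConjGeo}_\psi(\RAAG,X)$ as the concrete condition ``$w$ is a geodesic $\psi$-CR word''. Since a composition of inversions is length preserving, \cref{lem:conjgeos appearance} is available throughout.

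For the inclusion $\mathsf{ConjGeo}_\psi(\RAAG,X) \subseteq \mathsf{CycGeo}_\psi(\RAAG,X)$ I would take a geodesic $\psi$-CR word $w$ and argue by contradiction: if some $\psi$-cyclic permutation $w'$ of $w$ failed to be geodesic, then, as $\psi$ preserves length, $l(\mathrm{Geod}(w')) < l(w') = l(w)$, while $\mathrm{Geod}(w')$ is reached from $w$ by a $\psi$-cyclic permutation together with commutation relations and free reductions. This contradicts \cref{def: CR}, so every $\psi$-cyclic permutation of $w$ is geodesic and $w$ is a $\psi$-cyclic geodesic. This direction uses only that $\psi$ is length preserving.

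The reverse inclusion $\mathsf{CycGeo}_\psi(\RAAG,X) \subseteq \mathsf{ConjGeo}_\psi(\RAAG,X)$ is where the inversion hypothesis does the work. Given a $\psi$-cyclic geodesic $w$ (so $w$ is geodesic), by \cref{lem:conjgeos appearance} it suffices to show $w$ is $\psi$-CR. Suppose not; then \cref{form:inversions} lets me write $w =_{\RAAG} \psi(u)^{-1} v u$ with $\psi(u)^{-1} v u$ geodesic and $l(v) < l(w)$, so $u$ is non-empty. Since $\psi(u)^{-1} v u$ is a geodesic representative of $\pi(w)$, \cref{twist cyc geos} forces it to be a $\psi$-cyclic geodesic as well. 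But the $\psi$-cyclic permutation that moves the suffix $u$ to the front produces $\mathrm{Geod}(\psi(u))\cdot\psi(u)^{-1}v = \psi(u)\,\psi(u)^{-1}v$ (the $\mathrm{Geod}$ being redundant for length preserving $\psi$), exactly the shift appearing in \cref{prop:cycle red}; this word freely reduces to $v$ and so is not geodesic, contradicting that $\psi(u)^{-1}vu$ is a $\psi$-cyclic geodesic. Hence $w$ is $\psi$-CR, and so $w \in \mathsf{ConjGeo}_\psi(\RAAG,X)$.

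The main obstacle is this reverse inclusion, and in particular the two points at which being a composition of inversions is indispensable: \cref{form:inversions}, which produces the twisted normal form $\psi(u)^{-1}vu$ from the failure of $\psi$-CR, and \cref{twist cyc geos}, which transfers the $\psi$-cyclic geodesic property from $w$ to the a priori different word $\psi(u)^{-1}vu$. Both can fail for graph automorphisms: \cref{exmp: line order 2 auto} exhibits a $\psi$-cyclic geodesic $c^{-1}aac$ that is not $\psi$-CR, showing that the equality of the two languages genuinely needs $\psi$ to be a composition of inversions.
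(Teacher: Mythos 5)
Your proposal is correct, and its skeleton matches the paper's: both directions reduce membership in $\mathsf{ConjGeo}_{\psi}$ to the $\psi$-CR property via \cref{lem:conjgeos appearance}, and the reverse inclusion derives a contradiction from \cref{form:inversions} by exhibiting a length-reducing $\psi$-cyclic permutation. The difference lies in where that contradiction is realized. The paper stays on the word $v \in \mathsf{CycGeo}_{\psi}$ itself: from $v =_{\RAAG} \psi(u)^{-1}wu$ it extracts a single-letter witness and writes $v$ literally as $x_{1}\psi\left(a_{i}^{\mp 1}\right)x_{2}a_{i}^{\pm 1}x_{3}$, with all letters of $x_{1}$ commuting with $\psi\left(a_{i}^{\mp 1}\right)$ and all letters of $x_{3}$ commuting with $a_{i}^{\pm 1}$; the $\psi$-cyclic permutation moving the suffix $a_{i}^{\pm 1}x_{3}$ to the front then collapses to $\psi(x_{3})x_{1}x_{2}$, two letters shorter, contradicting $v \in \mathsf{CycGeo}_{\psi}$ directly. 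You instead never touch the letters of $w$: you transfer the $\psi$-cyclic geodesic property from $w$ to the auxiliary geodesic representative $\psi(u)^{-1}vu$ by a second, explicit application of \cref{twist cyc geos}, and obtain the contradiction there with the obvious suffix shift $\psi(u)\psi(u)^{-1}v$, which is visibly non-geodesic since $u$ is non-empty. Your route buys brevity and avoids the letter-level bookkeeping that the paper's ``in particular, we can write'' step quietly requires (justifying that literal form of $v$ needs a commutation argument the paper does not spell out); the paper's route keeps the argument self-contained on $v$, with the inversion hypothesis entering only through \cref{form:inversions}. The two places you flag as needing inversions are exactly the ones the paper depends on as well --- \cref{twist cyc geos} implicitly so, since it underlies the proof of \cref{form:inversions} --- and your citation of \cref{exmp: line order 2 auto} correctly identifies why the statement fails for graph automorphisms.
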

\begin{proof}
The $\subseteq$ direction is clear from the definitions, since $\psi$ is length preserving. 
\par 
Now suppose $v \in \mathsf{CycGeo}_{\psi}(\RAAG, X)$, but $v \not \in \mathsf{ConjGeo}_{\psi}(\RAAG, X)$. By \cref{lem:conjgeos appearance}, $v$ is not $\psi$-CR, and hence $v =_{\RAAG} \psi(u)^{-1}wu$ by \cref{form:inversions}. In particular, we can write
\[ v =  x_{1}\psi\left(a_{i}^{\mp 1}\right)x_{2}a_{i}^{\pm 1}x_{3},
\]
where $x_{1}, x_{2}, x_{3} \in X^{\ast}$ are geodesic, all letters of $x_{1}$ commute with $\psi\left(a_{i}^{\mp 1}\right)$, and all letters of $x_{3}$ commute with $a_{i}^{\pm 1}$. Then there exists a $\psi$-cyclic permutation
\[ v = x_{1}\psi\left(a_{i}^{\mp 1}\right)x_{2}a_{i}^{\pm 1}x_{3} \xleftrightarrow{\psi} \psi\left(a_{i}^{\pm 1}\right)\psi\left(x_{3}\right)x_{1}\psi\left(a_{i}^{\mp 1}\right)x_{2} =_{\RAAG} \psi(x_{3})x_{1}x_{2}.
\]
This contradicts the fact that $v$ is a $\psi$-cyclic geodesic, since the length of $\psi(x_{3})x_{1}x_{2}$ is less than the length of $v$. 
\end{proof}
\vspace{-10pt}
The following is immediate by \cref{thm:conjgeo regular extension group}, Propositions \ref{prop:cycgeo}, \ref{prop:conj and cyc} and \cite{LOEFFLER2002}.

\conjgeos

\begin{rmk}
\cref{thm:ConjGeo Regular} also holds for RACGs with analogous proof.
\end{rmk}
\vspace{-10pt}
One observation we make here is that the crucial step in the proof of \cref{prop:conj and cyc} comes from \cref{form:inversions}. One might ask if \cref{prop:conj and cyc} holds for other types of length preserving automorphisms, such as graph automorphisms. The next example illustrates how this is not the case. 
\begin{exmp}\label{exmp:clash languages}
Recall \cref{exmp: line order 2 auto} and consider the geodesic word $w = c^{-1}aac$. Then $w \in \mathsf{CycGeo}_{\psi}(\RAAG, X)$ since all $\psi$-cyclic permutations of $w$ are geodesic. However $w \not \in \mathsf{ConjGeo}_{\psi}(\RAAG, X)$, since there exists a sequence of $\psi$-cyclic shifts, commutation relations and free reduction to $w' = aa$, which is shorter than $w$.
\end{exmp}

\comm{
\begin{lemma}\label{lemma:conjgeo options}
Let $\psi \in \mathrm{Aut}(G_{\Gamma})$ be length preserving and fix elements within vertex groups. Then
\[ w \in \psi-\mathsf{ConjGeo}(G_{\Gamma},X) \Leftrightarrow \; \forall \; i: 
\]
\[
\rho_{i}(w) \in \psi-\mathsf{ConjGeo}(G_{i}, X_{i}) \cup \{ u_{0}\$ u_{1} \dots \$ u_{n} \; | \; \psi(u_{n})u_{0}, u_{1} \dots, u_{n-1} \in \mathsf{Geo}_{i} \}
\]
\end{lemma}

\begin{proof}
($\Rightarrow$): $w \in \psi-\mathsf{ConjGeo}(G_{\Gamma},X)$ implies $w \in \mathsf{Geo}(G_{\Gamma},X)$ and so $\rho_{i}(w) \in \mathsf{Geo}_{i}(\$\mathsf{Geo}_{i})^{*}$. In particular
\[ \rho_{i}(w) = u_{0}\$ u_{1} \dots \$ u_{n}
\]
where each $u_{j} \in \mathsf{Geo}_{i}$.\\
\textbf{Case 1: $n=0$}: Here $\rho_{i}(w) = u_{0} \in \mathsf{Geo}_{i}$. Suppose $\rho_{i}(w)$ is not $\psi$-CR, i.e.
\[ u_{0} = \psi(g)^{-1}ug
\]
Then
\[ w =_{\RAAG} \psi(g)^{-1}\cdot vu \cdot g
\]
where $v,g$ get mapped to $\epsilon$ under $\rho_{i}$. But this implies $w \not \in \psi-\mathsf{ConjGeo}(G_{\Gamma},X)$, which is a contradiction. Hence $\rho_{i}(w) \in \psi-\mathsf{ConjGeo}(G_{i}, X_{i})$.\\
\textbf{Case 2: $n>0$}: Note that if $w^{'}$ is a $\psi$-cyclic permutation of $w$, then $\rho_{i}(w^{'})$ is a $\psi$-cyclic permutation of $\rho_{i}(w)$. In particular there exists $w^{'}$ such that
\[ \rho_{i}(w^{'}) = \psi(u_{n})u_{0}\$ u_{1} \dots \$ u_{n-1}\$
\]
Since $w^{'}$ is geodesic, $\rho_{i}(w^{'}) \in \mathsf{Geo}_{i}(\$\mathsf{Geo}_{i})^{*}$ and so $\psi(u_{n})u_{0} \in \mathsf{Geo}_{i}$ as required.\\
($\Leftarrow$): By assumption $\rho_{i}(w) \in \mathsf{Geo}_{i}(\$\mathsf{Geo}_{i})^{*}$, and so $w \in \mathsf{Geo}$. Suppose $w \not \in \psi-\mathsf{ConjGeo}(G_{\Gamma}, X)$. Then there exists $y \in \mathsf{Geo}$ such that
\[ w = \psi(y)^{-1}xy
\]
and $l(x) < l(w)$. Here we choose $y$ to be of minimal length, and note $l(y) = l(\psi(y))$. \\
Since $w \in \mathsf{Geo}$, we can write
\[ \rho_{i}(w) = w_{i}u_{i}w^{'}_{i}
\]
where either:
\begin{itemize}[(a)]
    \item $w_{i} \in \mathsf{Geo}_{i}$, $u_{i} = w^{'}_{i} = \epsilon$, or
    \item $u_{i} \in \$ (\mathsf{Geo}_{i}\$)^{*}$, $w_{i}, w^{'}_{i} \in \mathsf{Geo}_{i}$
\end{itemize}
Since $y \in \mathsf{Geo}$ we can write
\[ \rho_{i}(y) = v_{i}y_{i}
\]
where $v_{i} \in (\mathsf{Geo}_{i}\$)^{*}$ and $y_{i} \in \mathsf{Geo}_{i}$. Define a map
\begin{align*}
    \widehat{\psi}: (X_{i} \cup \$)^{*} &\rightarrow (X_{i} \cup \$)^{*} \\
    a &\mapsto \psi(a) \quad a \in X_{i} \\
    \$ &\mapsto \$
\end{align*}
Then since $\psi$ fixes elements within vertex groups, we can assume that
\[ \rho_{i}(\psi(y)) = \widehat{\psi}(v_{i})\psi(y_{i})
\]
Therefore
\[ \rho_{i}(x) = \rho_{i}(\psi(y)wy^{-1}) = \widehat{\psi}(v_{i})\psi(y_{i})\cdot w_{i}u_{i}w^{'}_{i} \cdot y^{-1}_{i}v^{-1}_{i}
\]
Let $l_{i}(u)$ be the number of $X_{i}$ letters in a word $u \in (X_{i} \cup \$)^{*}$. Then
\[ l(u) = \sum_{i=1}^{n} l_{i}(\rho_{i}(u))
\]
In particular
\begin{equation}\label{eqn:2}
    l(x) = \sum_{i=1}^{n} l_{i}(\widehat{\psi}(v_{i})\psi(y_{i})\cdot w_{i}u_{i}w^{'}_{i} \cdot y^{-1}_{i}v^{-1}_{i}) \geq \sum_{i=1}^{n}(l(\psi(y_{i})w_{i}) + l_{i}(u_{i}) + l(w^{'}_{i}y^{-1}_{i})
\end{equation}
We need the following claim:\\
\textbf{Claim:}
\[ l(w_{i}) + l(w^{'}_{i}) \leq l(\psi(y_{i})w_{i}) + l(w^{'}_{i}y^{-1}_{i})
\]
\textbf{Proof of Claim:} \\
\textbf{Case 1:} $\rho_{i}(w) = w_{i} \in \mathsf{Geo}_{i}$. This implies
\[ \rho_{i}(\psi(y)^{-1}xy) \in \mathsf{Geo}_{i}
\]
so in particular we can assume $v_{i} = \epsilon$, i.e
\[ \rho_{i}(y) = y_{i}, \quad \rho_{i}(\psi(y)) = \psi(y_{i}) \quad \Rightarrow w_{i} = \psi(y_{i})^{-1}\rho_{i}(x)y_{i}
\]
Then
\begin{align*}
    l(\psi(y_{i})w_{i}) + l(w^{'}y^{-1}_{i}) &= l(\rho_{i}(x)y_{i}) + l(y^{-1}_{i}) \\
    &= l(\rho_{i}(x)) + l(y_{i}) + l(y^{-1}_{i}) \\
    &= l(\rho_{i}(x)) + l(\psi(y_{i})) + l(y^{-1}_{i}) \\
    &= l(w_{i})
\end{align*}
\textbf{Case 2}: $\rho_{i}(w) = w_{i}u_{i}w^{'}_{i}$. By assumption $\psi(w^{'}_{i})w_{i} \in \mathsf{Geo}_{i}$. Then
\begin{align*}
    l(\psi(y_{i})w_{i}) + l(w^{'}_{i}y^{-1}_{i}) &= l(\psi(y_{i})w_{i}) + l(\psi(w^{'}_{i})\psi(y^{-1}_{i}) \\
    &\geq l(\psi(w^{'}_{i})\psi(y^{-1}_{i})\psi(y_{i})w_{i}) \\
    &= l(\psi(w^{'}_{i})w_{i}) \\
    &= l(\psi(w^{'}_{i})) + l(w_{i}) \\
    &= l(w^{'}_{i}) + l(w_{i})
\end{align*}
We need to check whether the LHS of this inequality can equal zero. Suppose $\psi(y_{i})w_{i} = \epsilon$, i.e. $\psi(y_{i}) = w^{-1}_{i}$. Since
\begin{align*}
    \psi(w^{'}_{i})w_{i} \in \mathsf{Geo}_{i} &\Rightarrow \psi(w^{'}_{i})\psi(y^{-1}_{i}) \in \mathsf{Geo}_{i} \\
    &\Rightarrow \psi(w^{'}_{i}y^{-1}_{i}) \in \mathsf{Geo}_{i} \\
    &\Rightarrow w^{'}_{i}y^{-1}_{i} \in \mathsf{Geo}_{i}
\end{align*}
Let $y = \tilde{y}y_{i}$ and $w \in w_{i}\tilde{w}w^{'}_{i}$. Then
\[ \psi(y)wy^{-1} = \psi(\tilde{y})\cdot \psi(y_{i})w_{i}\tilde{w}w^{'}_{i}y^{-1}_{i}\cdot \tilde{y}^{-1} = \psi(\tilde{y})\cdot \tilde{w}w^{'}_{i}y^{-1}_{i}\cdot \tilde{y}^{-1} 
\]
Since $\tilde{w}w^{'}w_{i}$ and $w^{'}_{i}y^{-1}_{i}$ are both geodesic, $\tilde{w}w^{'}_{i}y^{-1}_{i} \in \mathsf{Geo}_{i}$, and since $l_{X}(\tilde{y}) < l_{X}(y)$, the equation above contradicts our assumption of minimality in choice of $y$. \\
Therefore by Equation \ref{eqn:2} and the Claim above, we have that
\begin{align*}
    l(w) &\geq \sum_{i=1}^{n}(l(w_{i}) + l_{i}(u_{i}) + l(w^{'}_{i})  \\
    &= \sum_{i=1}^{n}l_{i}(\rho_{i}(w)) \\
    &= l(w)
\end{align*}
But this contradicts our assumption that $l(x) < l(w)$, and so $w \in \psi-\mathsf{ConjGeo}(G_{\Gamma}, X)$, which completes the proof.
\end{proof}

\begin{thm}
Let $\psi \in \mathrm{Aut}(G_{\Gamma})$ be length preserving and fix elements within vertex groups. Suppose $\psi-\mathsf{CycGeo}(G_{i}, X_{i}) = \psi-\mathsf{ConjGeo}(G_{i}, X_{i})$ for each $i$. Then 
\[ \psi-\mathsf{CycGeo}(G_{\Gamma}, X) = \psi-\mathsf{ConjGeo}(G_{\Gamma}, X)
\]
\end{thm}

\begin{proof}
First note $\psi-\mathsf{ConjGeo}(G_{\Gamma},X) \subseteq \psi-\mathsf{CycGeo}(G_{\Gamma}, X)$ since if $w$ is a $\psi$-conjugacy geodesic, then so is every $\psi$-cyclic permutation of $w$ (since $\psi$ is length preserving). Now suppose $w \in \psi-\mathsf{CycGeo}(G_{\Gamma}, X)$. Since $w$ is geodesic, for each $i$ we have
\[ \rho_{i}(w) = u_{0}\$u_{1}\$\dots \$ u_{n}
\]
where each $u_{j} \in \mathsf{Geo}_{i}$. If $w^{'}$ is a $\psi$-cyclic permutation of $w$, then $\rho_{i}(w^{'})$ is a $\psi$-cyclic permutation of $\rho_{i}(w)$. In particular, there exists $w^{'}$ such that
\[ \rho_{i}(w^{'}) = \psi(u_{n})u_{0}\$u_{1}\dots u_{n-1}\$
\]
Since $w^{'} \in \mathsf{Geo}(G_{\Gamma},X)$ by assumption, we must have that 
\[ \psi(u_{n})u_{0}, u_{1}, \dots, u_{n-1} \in \mathsf{Geo}_{i}
\]
when $n \geq 1$. \\
Now suppose $n=0$, i.e. $\rho_{i}(w) = u_{0} \in \mathsf{Geo}_{i}$. For every $\psi$-cyclic permutation $u^{'}$ of $u_{0}$, there exists a $\psi$-cyclic permutation $w^{'}$ of $w$ such that $\rho_{i}(w^{'}) = u^{'} \in \mathsf{Geo}_{i}$. Therefore $u_{0} \in \mathsf{CycGeo}(G_{i}, X_{i})$. By assumption this means $u_{0} \in \mathsf{ConjGeo}(G_{i}, X_{i})$. \\
Therefore for all $n \geq 0$, we can apply Lemma \ref{lemma:conjgeo options} to conclude $w \in \psi-\mathsf{ConjGeo}(G_{\Gamma},X)$ as required. 

\end{proof}}
\subsection{Non-length preserving automorphisms and conjugacy geodesics}
\vspace{5pt}
We now focus on non-length preserving automorphisms of RAAGs, in particular partial conjugations and dominating transvections (see \cref{sec:autos RAAGs}). While these are infinite order automorphisms, they can be composed with inversions to give us order two automorphisms. We can then construct a virtual RAAG $A_{\phi}$, with respect to this type of finite order non-length preserving automorphism, and study the language $\mathsf{ConjGeo}\left(A_{\phi}, \widehat{X}\right)$.
\comm{
We start by highlighting an example which shows that \cref{prop:conj and cyc} doesn't hold in general for non-length preserving automorphisms.
\begin{exmp}
Let $\RAAG$ be the RAAG with defining graph
\[\begin{tikzcd}
	a & b \\
	d & c
	\arrow[no head, from=1-1, to=1-2]
	\arrow[no head, from=1-2, to=2-2]
	\arrow[no head, from=2-2, to=2-1]
	\arrow[no head, from=1-1, to=2-1]
\end{tikzcd}\]
Let $\psi: a \mapsto cac^{-1}, c \mapsto c^{-1}$ and consider $w = ca$. Then $w \in \mathsf{ConjGeo}_{\psi}(\RAAG, X)$, however the $\psi$-cyclic permutation
\[ w = ca \xleftrightarrow{\psi} \psi(a)c = cac^{-1}c
\]
is clearly not geodesic.
\end{exmp}}

\begin{defn}
For any vertex $v \in V(\Gamma)$, we define the \emph{link} of a vertex $\mathrm{Lk}(v)$ as
\[ \mathrm{Lk}(v) = \{ x \in V(\Gamma) \; | \; \{v,x\} \in E(\Gamma) \}.
\]
Similarly we define the \emph{star} of a vertex $\mathrm{St}(v)$ as 
\[ \mathrm{St}(v) = \mathrm{Lk}(v) \cup \{v\}.
\]
\end{defn}
\vspace{-10pt}
Our first result focuses on a composition of a partial conjugation and inversion. With this composition, we find that the the finite extension $A_{\phi}$ is precisely a graph product, and hence $\mathsf{ConjGeo}\left(A_{\phi}, \widehat{X}\right)$ is regular by \cite[Theorem 3.1]{Ciobanu2013}.

\begin{thm}\label{thm:non len geos}
Let $A_{\phi}$ be a virtual RAAG of the form $A_{\phi} = A_{\Gamma} \rtimes_{\phi} \langle t \rangle$ (as defined in \eqref{eqn: extension}) where $X$ is the standard generating set. For some $x \in V(\Gamma)$ and a connected component $D \subseteq \Gamma \setminus \mathrm{St}(x)$, let $\phi \in \mathrm{Aut}(\RAAG)$ be the following order two automorphism:
\begin{align*}
    \phi \colon X &\rightarrow X \\
    y &\mapsto xyx^{-1} \quad \forall \; y \in D, \\
    x &\mapsto x^{-1}, \\
    s &\mapsto s \quad \forall \; s \in X \setminus D \cup \{x\}.
\end{align*}
Then $A_{\phi}$ is a graph product, with vertex groups $\Z$ and $\Z/2\Z$.
\end{thm}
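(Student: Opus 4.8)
The plan is to exhibit an explicit change of generators turning the presentation \eqref{eqn: extension} of $A_{\phi}$ into the standard presentation of a graph product. First I would record that $\phi$ has order two: a direct check gives $\phi^{2}(y) = \phi(x)\phi(y)\phi(x)^{-1} = x^{-1}(xyx^{-1})x = y$ for $y \in D$, $\phi^{2}(x) = x$, and $\phi$ fixes everything else, so $m = 2$ and we may take $t^{2} = 1$, $t^{-1} = t$. Writing $R := X \setminus (D \cup \{x\})$, the presentation \eqref{eqn: extension} then reads: the RAAG relations of $A_{\Gamma}$, together with $t^{2}=1$ and the relations $tyt = xyx^{-1}$ ($y \in D$), $txt = x^{-1}$, and $ts = st$ ($s \in R$).

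The key move is to introduce the new generator $a := tx$ and eliminate $x$ via $x = ta$ (a Tietze transformation). The relation $txt = x^{-1}$ immediately gives $a^{2} = txtx = (txt)x = x^{-1}x = 1$, so $a$ is an involution; this is the observation that $\langle x, t\rangle$ is the infinite dihedral group $\Z/2\Z \ast \Z/2\Z = \langle a \rangle \ast \langle t\rangle$. I would then rewrite each remaining relation in terms of $a$, $t$ and the unchanged generators. Using $x = ta$ and $x^{-1} = at$, the partial-conjugation relation becomes $tyt = (ta)y(at)$, i.e. $y = aya$, which (as $a^{2}=1$) is precisely $[a, y] = 1$ for every $y \in D$. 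For $u \in \mathrm{Lk}(x) \subseteq R$ the RAAG relation $[x, u] = 1$ becomes $[ta, u] = 1$, and combined with $[t, u] = 1$ (which holds since $u \in R$) this is equivalent to $[a, u] = 1$. The relations $[t, s] = 1$ for $s \in R$ and all RAAG relations among $D \cup R$ involve neither $x$ nor conjugation by $t$ and are carried over unchanged; in particular no relation between $a$ and $t$ survives beyond $a^{2}=t^{2}=1$.

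Collecting these, $A_{\phi}$ is presented on generators $\{a, t\} \cup D \cup R$ subject to: $a^{2}=t^{2}=1$; $[a,y]=1$ for $y \in D$; $[a,u]=1$ for $u \in \mathrm{Lk}(x)$; $[t,s]=1$ for $s \in R$; and the RAAG relations among $D \cup R$. This is verbatim the standard presentation of the graph product over the graph $\Gamma'$ whose vertices are $(V(\Gamma)\setminus\{x\}) \cup \{v_{a}, v_{t}\}$, with vertex group $\Z$ on each $v \in V(\Gamma)\setminus\{x\}$ and $\Z/2\Z$ on $v_{a}$ and $v_{t}$, and whose edges are: all edges of $\Gamma$ avoiding $x$; an edge from $v_{a}$ to each vertex of $\mathrm{Lk}(x) \cup D$; an edge from $v_{t}$ to each vertex of $R$; and no edge between $v_{a}$ and $v_{t}$. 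Hence $A_{\phi} \cong G_{\Gamma'}$, a graph product with vertex groups $\Z$ and $\Z/2\Z$.

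I expect the main obstacle to be the transformation of the partial-conjugation relations: it is not obvious a priori that twisting by $\phi$ should yield a commutation relation, and the computation showing $tyt = xyx^{-1}$ becomes $[a,y]=1$ is the conceptual heart of the argument — it says that after replacing $x$ by the involution $a = tx$, the conjugating letter $x$ is absorbed so that $a$ centralises $D$. The remaining care is bookkeeping: checking that every original relation maps to a commutation (or vertex-group) relation and that the resulting graph $\Gamma'$ is simple, so that the transformed presentation is literally a graph product presentation rather than merely resembling one.
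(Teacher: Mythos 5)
Your proposal is correct and follows essentially the same route as the paper: the paper likewise introduces the new involution $u = tx$ via a Tietze transformation, eliminates $x$, and rewrites $tyt = xyx^{-1}$ as $[u,y]=1$ and $[x,a]=1$ (for $a \in \mathrm{Lk}(x)$) as $[u,a]=1$ using $[t,a]=1$, arriving at the same graph product presentation. Your explicit description of the resulting graph $\Gamma'$ is a nice addition but does not change the argument.
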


\begin{proof}
We start with the standard presentation for $A_{\phi}$, and use Tietze transformations to rewrite the extension in the form of a graph product. We have
\[ A_{\phi} = \langle X \cup \{t\} \; | \; R_{1}, R_{2}, \; t^{2} = 1, (tx)^{2} = 1, tyt = xyx^{-1} \; \text{for all} \; y \in D \rangle,
\]
where
\[ R_{1} = \{ [a,b] = 1 : \{a,b\} \in E(\Gamma) \}, \quad R_{2} = \{ [t,s] = 1 : s \in X \setminus D \cup \{x\} \}.
\]
Define a new generator $u = tx$, and remove $x$ using this relation. Our presentation becomes
\[ A_{\phi} = \langle X \setminus \{x\} \cup \{t, u\} \; | \; R'_{1}, R_{2}, t^{2} = 1, u^{2} = 1, tyt = tuyut \; \text{for all} \; y \in D \rangle,
\]
where the only other relations that change are of the form $[x,a]$ in $R_{1}$. These change to $[tu, a]$ for all $a \in St(x), a \neq x$. For each of these relations we have
\[ 1 = [tu,a] = uta^{-1}tua = ua^{-1}ua = [u,a],
\]
using the relation $[t,a] = 1$ from $R_{2}$. So each relation of the form $[tu,a]$ can be replaced by $[u,a]$. Also note
\[ tyt = tuyut \Leftrightarrow y = uyu \Leftrightarrow uy^{-1}uy = 1  \Leftrightarrow [u,y] = 1,
\]
so we can replace the relation $tyt = tuyut$ by $[u,y] = 1$. This leaves us with
\[ A_{\phi} = \langle X \setminus \{x\} \cup \{t, u\} \; | \; R_{1}, R_{2}, t^{2} = 1, u^{2} = 1, [u,y] = 1 \; \text{for all} \; y \in D \rangle,
\]
where
\[ R'_{1} = 
\begin{cases}
[a,b] = 1, & \{a,b\} \in E(\Gamma), \; a,b \neq x, \\
[u,a] = 1, & \; a \in St(x), \; a \neq x,
\end{cases}
\]
and 
\[ R_{2} = \{ [t,s] = 1, \; s \in X \setminus D \cup \{x\} \}.
\]
Our presentation is now in the form of a graph product with vertex groups $\mathbb{Z}$ and $\mathbb{Z}/2\Z$. 
\end{proof}
The following is immediate by \cite[Theorem 3.1]{Ciobanu2013}, since $\mathbb{Z}$ and $\mathbb{Z}/2\Z$ have regular conjugacy geodesics.

\nonlengthconjgeo
\comm{
\myRed{Remove this example? Proof fail CR step, need further justification.}\\
We also have regularity when we consider a composition of a specific type of dominating transvection with an inversion. We denote $Z(G)$ to be the \emph{centre} of a group.
\begin{prop}
Let $\psi \in \mathrm{Aut}(\RAAG)$ where
\[ \psi: x \mapsto xy, \quad y \mapsto y^{-1}
\]
for some $x,y \in \Gamma$ such that $y \in Z(\RAAG)$. Let $X$ be the standard generating set for $\RAAG$. Then $\psi-\mathsf{ConjGeo}(\RAAG, X)$ is regular.\\ Moreover, if we let $A_{\phi} = A_{\Gamma} \rtimes \langle t \rangle$ (as defined in (\ref{eqn: extension})), and $\widehat{X} = \{X, t\}$, then $\mathsf{ConjGeo}\left(A_{\phi}, \widehat{X}\right)$ is regular.
\end{prop}

\begin{proof}
We first show that if a word $w \in \psi-\mathsf{ConjGeo}(\RAAG, X)$, then it cannot contain both $x$ and $y$ letters. Let $n(x)$ and $n(y)$ denote the number of $x$ and $y$ letters respectively which appear in $w$. \\
\textbf{Case 1: $n(x) \leq n(y)$}: First shuffle $y$ letters next to all the $x$ letters, and collect together all remaining $y$ letters on the right, i.e.
\[ w =_{\RAAG} uv = \dots xy \dots xy \dots y^{k}
\]
where all $x$ letters are contained in $u$, with a $y$ letter directly right of each $x$ letter in the word. If we look at $\psi(w)$, i.e. a $\psi$-cyclic shift of $w$, we get
\[ \psi(w) = \dots x \dots x \dots y^{-k}
\]
Since all other letters remain fixed, we have that $|\psi(w)| < |w|$, which is a contradiction. \\
\textbf{Case 2: $n(x) > n(y)$}: Write $w =_{\RAAG} uv$ where $u$ contains all $y$ letters and each $y$ letter is next to an $x$ letter, and $v$ contains the remaining $x$ letters, i.e.
\[ w = \dots xy \dots xy \dots x \dots x
\]
If we compute the $\psi$-cyclic shift $v\psi(u)$ we get
\[ v\psi(u) = \dots x \dots x \dots x \dots x
\]
In particular, this word is shorter and so again gives a contradiction.
\par 
Next we consider all words $w \in (X \setminus \{x,y\})^{*}$. We immediately see that
\[ \psi-\mathsf{ConjGeo}(\RAAG, X \setminus \{x,y\}) = \mathsf{ConjGeo}(\RAAG, X \setminus \{x,y\})
\]
since all letters in $w$ are fixed under $\psi$.
\par 
We now need to check separately words which don't contain $x$, and words which don't contain $y$. We claim that
\begin{align*}
     \psi-\mathsf{ConjGeo}(\RAAG, X \setminus \{x\}) &= \mathsf{ConjGeo}(\RAAG, X \setminus \{x\}) \setminus \{ywy, y^{-1}wy^{-1} \; | \; w \in X \setminus \{x\} \} \\
     &\bigcup \{ywy^{-1}, y^{-1}wy \; | \; w \in X \setminus \{x\} \} 
\end{align*}
The $\supseteq$ inclusion is clear. For the $\subseteq$ direction, we note that if $w \in \psi-\mathsf{ConjGeo}(\RAAG, X \setminus \{x\})$, then any $\psi$-cyclic shift preserves the length of the word. Hence we can assume that $W \not =_{\RAAG} \psi^{-1}(U)VU$ for some $U,V$. This means we need to include all words in $\mathsf{ConjGeo}(\RAAG, X \setminus \{x\})$ except words of the form $ywy$ or $y^{-1}wy^{-1}$, since these are not $\psi$-CR. If $W$ is not part of this set, then $W$ must be of the form $ywy^{-1}$ or $y^{-1}wy$. These are precisely words which are not cyclically reduced, but are $\psi$-cyclically reduced. 
\par 
For any word over $X \setminus \{y\}$, we can use a similar argument to show
\[ \psi-\mathsf{ConjGeo}(\RAAG, X \setminus \{y\}) = \mathsf{ConjGeo}(\RAAG, X \setminus \{y\}) \bigcup \{xwx^{-1}, x^{-1}wx \; | \; w \in X \setminus \{y\} \} 
\]
noting that we don't need to discard any non-$\psi$-cyclically reduced words, as these would include both $x$ and $y$ letters. Hence
\[ \psi-\mathsf{ConjGeo}(\RAAG, X) = \psi-\mathsf{ConjGeo}(\RAAG, X \setminus \{x, y\}) \bigcup \psi-\mathsf{ConjGeo}(\RAAG, X \setminus \{x\}) \bigcup \psi-\mathsf{ConjGeo}(\RAAG, X \setminus \{y\})
\]
This is regular by \cref{lem: closure}.

\end{proof}}
\vspace{-10pt}
Regularity of the language $\mathsf{ConjGeo}\left(A_{\phi}, \widehat{X}\right)$ is surprisingly not universal across all non-length preserving cases. This can be seen by the following example.
\begin{prop}\label{prop:conjgeo not reg}
Let $A_{\Gamma} = \langle X \rangle$. For some $x,y \in V(\Gamma)$ such that $[x,y] \neq 1$, and $y$ is adjacent to all vertices in $\Gamma$ which are adjacent to $x$, define the following composition of a dominating transvection and inversion:
\begin{align*}
    \phi \colon X &\rightarrow X \\
    x &\mapsto xy \\
    y &\mapsto y^{-1} \\
    s &\mapsto s \quad \forall \; s \in X \setminus \{x,y\}^{\pm}.
\end{align*}
Then the language $\mathsf{ConjGeo}_{\phi}(\RAAG, X)$ is not context-free. 

\end{prop}
\comm{
\myRed{Following lemma not necessary?}
\begin{lemma}\label{lem:count xy}
Let $w \in \psi-\mathsf{ConjGeo}(\RAAG)$ as defined in \cref{prop:conjgeo not reg}. Define
\begin{enumerate}[(i)]
    \item $n(xy)$ := number of subwords $u$ of $w$ of the form $u = xy$, and
    \item $n(x)$ := number of $x$ letters in $w$ which don't occur as a subword $u$ of the form $u = xy$
\end{enumerate}
i.e. if $n(w)$ denotes the number of $x$ letters which occur in a word $w$, then $n(w) = n(xy) + n(x)$.\\
Then $n(xy) \leq n(x)$. 
\end{lemma}

\begin{proof}
Since $\RAAG$ is a direct product, we can write $w$ in the form $w = ab$ where $a \in \{x,y\}^{*}$, $b \in \{z\}^{*}$. Since any $\psi$-cyclic shifts will fix $b$, we only need to consider the prefix $a$.
\par
Suppose $n(xy) > n(x)$, and compute the $\psi$-shift
\[ w \xmapsto{\psi} \psi(w) = \psi(a)\psi(b) 
\]
Then any subwords $xy$ in $a$ get mapped to $x$ under $\psi$, which decreases the length of the word by $1$. Similarly any remaining $x$ letters get mapped to $xy$ under $\psi$, which increases the length of the word by $1$. Since $n(xy) > n(x)$, overall we decrease the length of the word, i.e. 
\[ |\psi(w) | < |w|
\]
This is a contradiction since $w \in \psi-\mathsf{ConjGeo}(\RAAG)$ and $w \sim_{\psi} \psi(w)$. 
\end{proof}
We now prove \cref{prop:conjgeo not reg}. }
\vspace{-10pt}
The proof of this proposition uses similar techniques to \cite{Ciobanu2013}, where the authors show that the spherical conjugacy language on the free group on two generators is not context-free.
\begin{proof} 
Suppose $\mathsf{ConjGeo}_{\phi}(\RAAG, X)$ is context-free, and consider the intersection $I = \mathsf{ConjGeo}_{\phi}(\RAAG, X) \cap L$, where $L = x^{+}(xy)^{+}x^{+}$. Since $L$ is regular, $I$ must be context-free by \cref{lem:intersection}. We claim that $ I = \{  x^{p}(xy)^{q}x^{r} \; | \; p \geq q, r \geq q \}$.
By \cref{cor:sequence} we need to check all possible sequences of $\phi$-cyclic permutations, commutation relations and free cancellations of $w = x^{p}(xy)^{q}x^{r} \in I$. 
\par 
Any word $v \in [w]_{\phi}$ must contain only the letters $x$ and $y$, and so we can assume no commutation relations exist in $v$. Also, $l(v) \geq l(w) = p+r+2q$, since $w$ is $\phi$-cyclically reduced. This leaves us with the following possible forms for $v$, by taking $\phi$-cyclic permutations of $w$:
\begin{enumerate}
    \item $v = (xy)^{r_{1}}x^{p}(xy)^{q}x^{r_{2}}$, where $r = r_{1} + r_{2}$. Here $l(v)>l(w)$.
    \item $v = y^{-1}(xy)^{r}x^{p}(xy)^{q-1}x$. Again $l(v) > l(w)$.
    \item $v = x(xy)^{r}x^{p}(xy)^{q-1}$. Here we need $p+2q+2r-1 \geq p+r+2q$, and so $r \geq 1$. If we keep repeating Cases 2 and 3, by moving letters from $(xy)^q$ to the front of the word, we find that $r \geq q$. This gives our first condition on $I$. 
    \item $v = (xy)^{p_{1}}x^{q}(xy)^{r}x^{p_{2}}$, where $p = p_{1} + p_{2}$. Here we require $q \leq r + p_{1}$, which always holds since $q \leq r$ and $p_{1} \geq 0$. 
    \item $v = x^{s}(xy)^{p}x^{q}(xy)^{r-s}$ where $1 \leq s \leq r$. Similar to Case 3, we can repeat this pattern of moving letters from $(xy)^{r}$ to find that $q \leq p$. This gives our second condition on $I$. 
    \item $v = (xy)^{q_{1}}x^{r}(xy)^{p}x^{q_{2}}$, where $q = q_{1} + q_{2}$. Here we need $q \leq p+q_{1}$, which holds since $q \leq p$ and $q_{1} \geq 0$. 
    \item $v = x^{s}(xy)^{q}x^{r}(xy)^{p-s}$ where $1 \leq s \leq p$. This case always works, since if $p-s>0$, then $l(v)>l(w)$. 
\end{enumerate}
\par 

\comm{
When considering all possible $\phi$-shifts of $w$, we notice that any word which could be shorter than $w$ must not start with an $(xy)^{i}$ block, since $w$ starts with a string of $x$ letters and $x < y$. Also, none of the letters can shuffle after any $\phi$-shifts, and so we only need to consider words of the form $x^{i}(xy)^{j}x^{l}$. Hence the only words which could be shorter are of the form
\[ x^{r}\phi\left(x^{p}\right)\phi\left(\left(xy\right)^{q}\right) = x^{r}\left(xy\right)^{p}x^{q} \quad \text{or} \quad \phi\left(\left(xy\right)^{q}\right)\phi\left(x^{r}\right)x^{p} = x^{q}\left(xy\right)^{r}x^{p}.
\]
In the first case, we need
\begin{align*}
    l(x^{p}(xy)^{q}x^{r}) &\leq l(x^{r}(xy)^{p}x^{q}) \\
    \Rightarrow p + r + 2q &\leq r + q + 2p \\
    \Rightarrow q &\leq p.
\end{align*}
Similarly for the second case we require $r \geq q$. These are the only cases we need to consider. 
\par }
This proves the claim, and it remains to prove that the language $I$ is not context-free. Let $k$ be the constant given by the Pumping Lemma for context-free languages (see \cite{Hopcroft}, Theorem 7.18, Page 281). Consider the word $W = x^{n}(xy)^{n}x^{n} \in I$ where $n > k$. Here $W$ contains 3 blocks, namely $x^{n}, (xy)^{n}$ and $x^{n}$. Hence by the Pumping Lemma, $W$ can be written as $W = stuvw$ where $l(tv) \geq 1, l(tuv) \leq k$ and $st^{i}uv^{i}w \in L$ for all $i \geq 0$. Since $l(tuv) \leq k < n$, $tuv$ cannot be part of more than 2 consecutive blocks. 
\par 
\textbf{Case 1: $tuv$ part of one block only}.\\
a) $tuv$ lies in first block. Then
\[ s \cdot tuv \cdot w = x^{n-j_{1}-j_{2}}\cdot x^{j_{1}} \cdot x^{j_{2}}(xy)^{n}x^{n}.
\]
When $i = 0$,
\[ suw = x^{n-j_{1}-j_{2}} \cdot x^{k} \cdot x^{j_{2}}(xy)^{n}x^{n} = x^{n+k-j_{1}}(xy)^{n}x^{n},
\]
for some $k < j_{1}$. Here $p < n$ and $q = n$, and so $suw \not \in I$, which is a contradiction.\\
b) $tuv$ lies in 2nd block. Then
\[ s \cdot tuv \cdot w = x^{n}(xy)^{i_{1}}\cdot (xy)^{i_{2}} \cdot (xy)^{i_{3}}x^{n},
\]
where $i_{1} + i_{2} + i_{3} = n$. If we let $i \geq 2$, then for $st^{i}uv^{i}w$ we have $q > n$ and $p = r = n$, and so $st^{i}uv^{i}w \not \in I$. \\
c) $tuv$ lies in third block. Similar to a), we have that for $i = 0$, $r < q$.
\par 
\textbf{Case 2: $tuv$ is part of more than one block.}\\
If either $t$ or $v$ contains both $x$ and $xy$ letters, then for $i \geq 2$, the word $st^{i}uv^{i}w$ contains at least 4 blocks alternating between powers of $x$ and powers of $xy$, and so can't lie in $I$. Hence either 
\[ t = x^{a}, \; v = (xy)^{b} \quad \text{or} \quad t = (xy)^{a}, \; v = x^{b}.
\]
i) $t$ in first block, $v$ in second block. Then
\[ stuvw = x^{i_{1}}\cdot x^{i_{2}}\cdot x^{i_{3}}(xy)^{j_{1}} \cdot (xy)^{j_{2}} \cdot (xy)^{j_{3}}x^{n},
\]
where $s = x^{i_{1}}, t = x^{i_{2}}, u = x^{i_{3}}(xy)^{j_{1}}, v = (xy)^{j_{2}}, w = (xy)^{j_{3}}x^{n}$, and $i_{1} + i_{2} + i_{3} = j_{1} + j_{2} + j_{3} = n$. When $i \geq 2$, then for $st^{i}uv^{i}w$ we have $q > n$ and $r = n$, so $st^{i}uv^{i}w \not \in I$.\\
ii) $t$ in second block, $v$ in third block. Again for $i \geq 2$, $q > p$.
\par
All cases give a contradiction, and so $I$ is not context-free. Hence the language $\mathsf{ConjGeo}_{\phi}(\RAAG, X)$ is not context-free. 
\end{proof}
\conjgeonotCF
\vspace{-10pt}
This result highlights interesting behaviour in the language of conjugacy geodesics in group extensions. For example, \cref{prop:conjgeo not reg} can be used to show that the language $\mathsf{ConjGeo}_{\phi}\left(F_{2}, \{x,y\}^{\pm}\right)$ is not context-free, with respect to a finite order non-length preserving automorphism. Consider the extension $F_{\phi} = F_{2} \rtimes_{\phi} \Z / 2\Z$ as in \cref{eqn:original split}. The language $\mathsf{ConjGeo}(F_{\phi}, \widehat{X})$ contains the following subsets:
\vspace{-10pt}
    \begin{itemize}
        \item[(i)] $\mathsf{ConjGeo}\left(F_{2}, \{x,y\}^{\pm}\right)$, which is regular \cite[Proposition 2.2]{Ciobanu2013}, and
        \item[(ii)] $\mathsf{ConjGeo}_{\phi}\left(F_{2}, \{x,y\}^{\pm}\right)$, which is not context-free.
    \end{itemize}
\vspace{-10pt}
Moreover, since $F_{\phi}$ is virtually free, then $\mathsf{ConjGeo}\left(F_{\phi}, \widehat{X}\right)$ must be regular \cite[Theorem 3.1]{Ciobanu2016}. This demonstrates that twisted conjugacy geodesics can exhibit different behaviour within a group extension. For RAAGs in general, the following question remains unclear.

\begin{question}
    Is $\mathsf{ConjGeo}\left(A_{\phi}, \widehat{X}\right)$ regular, where $A_{\phi} = A_{\Gamma} \rtimes_{\phi} \langle t \rangle$ (as in \cref{eqn: extension}), and $\phi \in \mathrm{Aut}(\RAAG)$ is a finite order transvection?
\end{question}

\subsection{Graph products}
\vspace{5pt}
We mention a more general result for graph products, with some restrictions on vertex groups, as a twisted adaptation of Theorem 2.4 in \cite{Ciobanu2016}. Further details can be found in \cite{Crowe2022}.
\par 
Recall $G_{\Gamma} = \langle X \rangle$ denotes a graph product with standard generating set $X = \bigcup^{k}_{i=1} X_{i}$, where each vertex group $G_{i}$ is generated by $X_{i}$.

\begin{defn}
For $\psi \in \mathrm{Aut}(G_{\Gamma})$, we say $\psi$ \emph{fixes the vertex groups} if for all $x_{i} \in X_{i}$, $\psi(x_{i}) \in G_{i}$ for every vertex group $G_{i} = \langle X_{i} \rangle$. 
\end{defn}

\begin{cor}\label{cor:graphprod geo regular}
Let $G_{\Gamma}$ be a graph product with vertex groups $\langle X_{i} \rangle$. Let $G_{\phi}$ be a virtual graph product of the form $G_{\phi} = G_{\Gamma} \rtimes_{\phi} \langle t \rangle$ (as defined in \eqref{eqn: extension}) with standard generating set $\widehat{X}$ where:
\begin{enumerate}
    \item[(i)] $\mathsf{Geo}(G_{i}, X_{i})$ is regular for each vertex group,
    \item[(ii)] $\phi \in \mathrm{Aut}(G_{\Gamma})$ is length preserving and fixes the vertex groups, and
    \item[(iii)] $\mathsf{CycGeo}_{\phi^{l}}(G_{i}, X_{i}) = \mathsf{ConjGeo}_{\phi^{l}}(G_{i}, X_{i})$ for each vertex group, for all $0 \leq l \leq m-1$.
\end{enumerate}
Then $\mathsf{ConjGeo}\left(G_{\phi}, \widehat{X}\right)$ is regular.
\end{cor}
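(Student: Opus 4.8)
The plan is to reduce the statement to regularity of the twisted conjugacy geodesic languages of the base group, exactly as in the proof of \cref{thm:ConjGeo Regular}. By \cref{lem:Ventura}, conjugacy in $G_{\phi}$ corresponds to twisted conjugacy in $G_{\Gamma}$, so that
\[ \mathsf{ConjGeo}\left(G_{\phi}, \widehat{X}\right) = \bigcup^{m}_{l=1} \{t^{l}U \; | \; U \in \mathsf{ConjGeo}_{\phi^{l}}(G_{\Gamma}, X) \}.
\]
Since $\{t^{l}\}$ is a singleton language and regularity is closed under concatenation and finite union (\cref{lem: closure}), it suffices to prove that $\mathsf{ConjGeo}_{\phi^{l}}(G_{\Gamma}, X)$ is regular for each $1 \leq l \leq m$. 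For $l = m$ the automorphism $\phi^{m}$ is trivial and regularity is the untwisted statement of Theorem 2.4 in \cite{Ciobanu2016}, so I fix $1 \leq l \leq m-1$ and set $\psi = \phi^{l}$; note $\psi$ is again length preserving, fixes the vertex groups, and has finite order dividing $m$, so conditions (i)--(iii) pass to $\psi$.

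First I would establish regularity of $\mathsf{CycGeo}_{\psi}(G_{\Gamma}, X)$. Condition (i), together with the standard fact that graph products preserve regularity of the geodesic language, gives that $\mathsf{Geo}(G_{\Gamma}, X)$ is regular. Arguing exactly as in \cref{prop:cycgeo}, one has
\[ \mathsf{CycGeo}_{\psi}(G_{\Gamma}, X) = X^{*} \setminus \mathsf{Cyc}_{\psi}\left(X^{*} \setminus \mathsf{Geo}(G_{\Gamma}, X)\right).
\]
The automaton construction in \cref{lem:automata} carries over verbatim: it uses only that $\psi$ is a length-preserving automorphism of finite order which, since $\psi$ fixes the vertex groups, acts as a bijection on $X$, so $\mathsf{Cyc}_{\psi}$ preserves regularity. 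Combined with \cref{lem: closure}, this shows $\mathsf{CycGeo}_{\psi}(G_{\Gamma}, X)$ is regular.

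The heart of the argument is then to prove the equality
\[ \mathsf{ConjGeo}_{\psi}(G_{\Gamma}, X) = \mathsf{CycGeo}_{\psi}(G_{\Gamma}, X), \]
a twisted adaptation of Theorem 2.4 in \cite{Ciobanu2016}. The inclusion $\subseteq$ is immediate from length preservation, since every $\psi$-cyclic permutation of a $\psi$-conjugacy geodesic is again geodesic. For $\supseteq$ I would use the syllable projections $\rho_{i}$ onto each vertex group: because $\psi$ fixes the vertex groups, any $\psi$-cyclic permutation of $w$ projects to a $\psi$-cyclic permutation of $\rho_{i}(w)$, so being a $\psi$-cyclic geodesic can be tested one vertex group at a time. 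Condition (iii) then upgrades the vertex-level property ``$\psi$-cyclic geodesic'' to ``$\psi$-conjugacy geodesic'', after which a minimal-length-conjugator argument — taking a putative twisted conjugator $y$ of least $X$-length and comparing syllable lengths $\sum_{i} l_{i}(\rho_{i}(\cdot))$ on both sides — forces any drop in length to occur already inside some vertex group, contradicting condition (iii). This yields $w \in \mathsf{ConjGeo}_{\psi}(G_{\Gamma}, X)$.

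I expect this last step to be the main obstacle: controlling how a twisted conjugator can shorten a word in $G_{\Gamma}$ while respecting the commutation (shuffle) relations, and pushing the syllable-counting estimate through, is delicate. It also relies on a graph-product analogue of \cref{cor:sequence} characterizing $\mathsf{ConjGeo}_{\psi}$ as the language of geodesic $\psi$-cyclically reduced words, extending the RAAG arguments of \cref{section:twisted conj}; the full details are carried out in \cite{Crowe2022}. Once the equality is established, regularity of $\mathsf{ConjGeo}_{\psi}(G_{\Gamma}, X)$ follows from that of $\mathsf{CycGeo}_{\psi}(G_{\Gamma}, X)$, which completes the reduction and hence the proof.
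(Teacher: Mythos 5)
Your proposal takes essentially the same route as the paper's own argument: the $t^{l}$-decomposition of $\mathsf{ConjGeo}\left(G_{\phi},\widehat{X}\right)$ into twisted components, regularity of $\mathsf{CycGeo}_{\phi^{l}}(G_{\Gamma},X)$ via the arguments of \cref{prop:cycgeo} and \cref{lem:automata}, and the equality $\mathsf{ConjGeo}_{\phi^{l}}(G_{\Gamma},X)=\mathsf{CycGeo}_{\phi^{l}}(G_{\Gamma},X)$ obtained through the syllable projections $\rho_{i}$, the minimal-conjugator syllable-counting estimate, and hypothesis (iii) --- exactly the twisted adaptation of Theorem 2.4 of \cite{Ciobanu2016} that the paper relies on, with full details deferred to \cite{Crowe2022} just as the paper does. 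This is correct and matches the intended proof.
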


\section{The Conjugacy shortlex language in virtual graph products}\label{section: conj sl}
We now turn our attention to the language $\mathsf{ConjSL}$ in virtual graph products. As a slight detour, we first study acylindrically hyperbolic RAAGs and RACGs (see \cite{Osin2015} for background on acylindrically hyperbolic groups).
\vspace{-10pt}

\subsection{$\mathcal{AH}$-accessability}
We begin with the following result by Antol\'in and Ciobanu.

\begin{thm}\label{thm:acy hyp languages}(\cite{Antolin2016a}, Theorem 1.5)\\
If $G$ is non-elementary acylindrically hyperbolic, then $\mathsf{ConjSL}(G, X)$ is not unambiguous context-free with respect to any generating set $X$.

\end{thm}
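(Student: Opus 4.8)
The plan is to prove the contrapositive via the Chomsky--Sch\"{u}tzenberger theorem. Since every unambiguous context-free language has an algebraic strict growth series, it suffices to show that the spherical conjugacy growth series $\tilde{\sigma}(G,X)$ is transcendental for every finite generating set $X$; the language-theoretic conclusion then follows immediately. This reduces the statement to a purely analytic question about the conjugacy growth function $c(n) := |\{ [g]_{c} : |g|_{c} = n \}|$, whose generating function is exactly $\tilde{\sigma}(G,X)$.

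The heart of the argument is to show that the coefficients $c(n)$ produce a singularity that is incompatible with algebraicity. Concretely, the target is an asymptotic of the shape $c(n) \asymp \lambda^{n}/n$, where $\lambda > 1$ is the conjugacy growth rate. The exponential factor $\lambda^{n}$ should come from an exponential lower bound on the number of conjugacy classes, which I would extract from the structure theory of acylindrically hyperbolic groups: by Osin's work such a $G$ admits a non-elementary acylindrical action on a hyperbolic space and therefore contains loxodromic (contracting) elements, and combining independent contracting elements yields a free-like family producing exponentially many distinct conjugacy classes at each length. The crucial $1/n$ correction is a de-periodization effect: a conjugacy class of minimal length $n$ is carried by on the order of $n$ distinct cyclic words (its cyclic permutations), so passing from a count of cyclically reduced words to a count of conjugacy classes divides by the period, which is generically of order $n$.

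The final step is the analytic obstruction. A power series whose coefficients satisfy $c(n) \asymp \lambda^{n}/n$ has a logarithmic singularity of type $-\log(1-\lambda z)$ at $z = 1/\lambda$. By singularity analysis, an algebraic function has coefficient asymptotics of the form $C\,\rho^{-n} n^{\alpha}$ with $\alpha$ rational and $\alpha \notin \{-1,-2,\dots\}$, and the excluded value $\alpha = -1$ is precisely the one forced here. Hence $\tilde{\sigma}(G,X)$ cannot be algebraic, so it is transcendental, and consequently $\mathsf{ConjSL}(G,X)$ is not unambiguous context-free. This parallels the hyperbolic, non-virtually-cyclic case recorded in \cref{prop:all gen}(2), but must be carried out without the automatic structures and regular geodesic languages available there.

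The main obstacle is establishing the sharp asymptotic --- or, more modestly, the logarithmic singularity type --- directly in the acylindrical setting, where the regular-language machinery for hyperbolic groups is absent. The genuine technical work is to (i) produce the exponential lower bound on conjugacy classes purely from contracting elements and the acylindrical action, and (ii) control the overcounting introduced by conjugacy tightly enough to isolate the $1/n$ factor, rather than only a polynomial correction of unknown exponent. One must also check that the relevant singularity type, and hence transcendence, is robust under change of generating set: although $\lambda$ and the implied constants depend on $X$, the logarithmic ($\alpha=-1$) singularity persists for every $X$, which is exactly what yields the ``with respect to any generating set'' clause.
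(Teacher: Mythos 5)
Your opening reduction is fine as logic --- transcendence of $\tilde{\sigma}(G,X)$ would indeed kill unambiguous context-freeness via Chomsky--Sch\"utzenberger --- but note that this statement is quoted in the paper from \cite{Antolin2016a} rather than proved there, and the paper remarks immediately after it that the algebraic type of the conjugacy growth series of acylindrically hyperbolic groups is \emph{still open}. Your plan proves the theorem only by first resolving that open problem, so the ``genuine technical work'' you defer in your last paragraph is not a technicality: it is the missing content. Concretely, both estimates needed for $c(n) \asymp \lambda^{n}/n$ are unavailable in this generality. For the lower bound, what is actually known (Hull and Osin) is exponential conjugacy growth, i.e.\ $c(n) \geq e^{\varepsilon n}$ for some $\varepsilon > 0$ coming from free-like families of loxodromic elements; there is no argument identifying $\varepsilon$ with the growth rate $\log \lambda$ of $(G,X)$, and even the equality of conjugacy growth rate and standard growth rate is unknown outside the hyperbolic case. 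For the upper bound --- which is what actually forces the excluded exponent $\alpha = -1$ in Flajolet's rigidity theorem --- your de-periodization heuristic is precisely the Coornaert--Knieper mechanism for hyperbolic groups: it works there because minimal-length representatives of a class are related by cyclic permutation of geodesic words together with boundedly short conjugators, all controlled by regular languages of geodesics. In a general acylindrically hyperbolic group none of this survives: there is no automatic structure, conjugators between minimal-length representatives cannot be uniformly bounded, a class of minimal length $n$ need not be ``carried by $\asymp n$ cyclic words,'' and nothing rules out classes so thick that $c(n)$ oscillates between orders $\lambda^{n}/n$ and $\lambda^{n}$, which is consistent with your lower bound but destroys the claimed logarithmic singularity.

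Two smaller analytic points in the same direction. First, a $\Theta$-bound $c(n) \asymp \lambda^{n}/n$ does suffice against algebraicity, but not by the route you state: two-sided bounds do not produce a $-\log(1-\lambda z)$ singularity; one must instead invoke the rigid form of algebraic coefficient asymptotics ($a_n \sim C n^{\alpha}\beta^{n}$ with $\alpha \in \mathbb{Q}$, $\alpha \notin \{-1,-2,\dots\}$, valid along arithmetic progressions) and check it is incompatible with the $\Theta$-bounds on each progression --- this is exactly how the hyperbolic case, \cref{prop:all gen}(2), is proved in \cite{Antolin2016a}, and your sketch is essentially a transplant of that proof into a setting where its two pillars are absent. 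Second, the cited proof of the acylindrically hyperbolic statement is engineered precisely to avoid establishing transcendence of the full series: it combines the Chomsky--Sch\"utzenberger/Flajolet obstruction with coarser counting estimates extracted from hyperbolically embedded subgroups, enough to contradict the rigid algebraic asymptotic form without ever determining the singularity type of $\tilde{\sigma}(G,X)$ --- which is why the transcendence question can remain open while the language-theoretic conclusion holds. To repair your proposal you would either need to prove the matching upper bound $c(n) = O(\lambda^{n}/n)$ (new, and likely false without further hypotheses), or abandon global asymptotics and argue, as in the cited source, from partial coefficient information that already violates what algebraicity permits.
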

\vspace{-10pt}
We note that the algebraic type of the conjugacy growth series, for acylindrically hyperbolic groups, is still open. 
\par
\cref{thm:acy hyp languages} will allow us to prove the following.

\acyhyp
\vspace{-10pt}
We will show in this case that $G$ must necessarily be acylindrically hyperbolic, using a property of groups known as \emph{acylindrically hyperbolic accessibility} (see \cite{Abbott2019} for further details). In a correction note of \cite{Minasyan2015}, Minasyan and Osin proved a criterion for when extensions of acylindrically hyperbolic groups are acylindrically hyperbolic. We note that it is still an open question in general as to whether acylindrical hyperbolicity is a quasi-isometry invariant.
\begin{thm}\label{thm:acy hyp extensions}(\cite{Minasyan2015}, Lemma 6). If $H \trianglelefteq G$ of finite index, where $H$ is 
\begin{enumerate}
    \item[(i)] acylindrically hyperbolic, and
    \item[(ii)] $\mathcal{AH}$-accessible,
\end{enumerate}
then $G$ is acylindrically hyperbolic.
\end{thm}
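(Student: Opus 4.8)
The plan is to produce a non-elementary acylindrical action of $G$ on a hyperbolic space, which by Osin's characterisation is equivalent to $G$ being acylindrically hyperbolic. I would work inside the poset $\mathcal{AH}(H)$ of acylindrically hyperbolic structures on $H$, that is, equivalence classes of cobounded acylindrical actions on hyperbolic spaces ordered by domination, since this is exactly the object controlled by the hypotheses. Hypothesis $(ii)$ guarantees that $\mathcal{AH}(H)$ has a largest element $\ell$, and hypothesis $(i)$ guarantees that $\ell$ is non-elementary. Represent $\ell$ by a (possibly infinite) generating set $S$ of $H$ for which $\mathrm{Cay}(H,S)$ is hyperbolic and the left $H$-action is acylindrical and non-elementary.

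The first substantive step is to record that $\ell$ is canonical. The group $\mathrm{Aut}(H)$ acts on $\mathcal{AH}(H)$ by $\alpha \cdot [T] = [\alpha(T)]$ and preserves the domination order, so, a largest element being unique, $\ell$ is fixed by every automorphism of $H$. This is precisely where $\mathcal{AH}$-accessibility is needed: without a canonical largest structure, conjugation by an element of $G$ could carry one acylindrical structure on $H$ to an inequivalent one, leaving nothing to extend.

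Next I would transport this invariance to $G$. Conjugation gives a homomorphism $G \to \mathrm{Aut}(H)$, $g \mapsto c_g$, whence $[c_g(S)] = \ell = [S]$ for every $g \in G$; that is, $c_g(S)$ and $S$ are equivalent generating sets. Choosing coset representatives $g_1,\dots,g_n$ of the finite-index subgroup $H \trianglelefteq G$ and replacing $S$ by the finite union $S' = \bigcup_i c_{g_i}(S)$, I obtain an equivalent structure, since a finite union of sets each equivalent to $S$ is again equivalent to $S$, and now $[S']$ is $G$-invariant up to equivalence (each $c_g(S')$ differs from $S'$ only by a permutation of the pieces and an inner automorphism of $H$, both of which fix $\ell$). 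I then set $\widehat{S} = S' \cup T$, where $T$ is a finite transversal of $H$ in $G$, so that $G = \langle \widehat{S}\rangle$ and $\mathrm{Cay}(G,\widehat{S})$ contains $\mathrm{Cay}(H,S')$.

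Finally I would check the three properties of the $G$-action on $\mathrm{Cay}(G,\widehat{S})$. Hyperbolicity follows because $[G:H]<\infty$ together with the $G$-invariance of $[S']$ makes $\mathrm{Cay}(G,\widehat{S})$ quasi-isometric to the hyperbolic graph $\mathrm{Cay}(H,S')$. Non-elementarity is inherited, since $H\leq G$ already acts with two independent loxodromics and the same elements witness non-elementarity for the larger action. Acylindricity is the delicate point and the main obstacle: I must deduce acylindricity of the $G$-action from acylindricity of its restriction to the finite-index subgroup $H$, using that $\widehat{S}$ was built so that the $G$-action does not distort the coarse geometry; quantitatively, for each $\varepsilon$ one bounds the joint coarse stabiliser of two far-apart points of $\mathrm{Cay}(G,\widehat{S})$ by enlarging the corresponding bound for $H$ by the index $n$. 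Making this acylindricity estimate precise, rather than the largely formal invariance and quasi-isometry steps above, is where the genuine work lies, and it is exactly the content of \cite{Minasyan2015}, Lemma 6. Once it is established, $G$ acts non-elementarily and acylindrically on a hyperbolic space, so $G$ is acylindrically hyperbolic.
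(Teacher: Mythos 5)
First, a point of reference: the paper does not prove this statement at all --- it is quoted directly from the correction note to \cite{Minasyan2015}, so the only meaningful comparison is with that source. Your scaffolding does match the actual argument there: $\mathcal{AH}$-accessibility provides a largest element $\ell$ of the poset $\mathcal{AH}(H)$; since $\mathrm{Aut}(H)$ acts on $\mathcal{AH}(H)$ by order-preserving bijections, $\ell$ is fixed by every automorphism (and inner automorphisms fix $\mathcal{AH}(H)$ pointwise, since $c_h(S)\sim S$ always); acylindrical hyperbolicity of $H$ forces $\ell$ to be non-elementary because $\ell$ dominates some non-elementary structure and domination preserves loxodromicity; and the invariant structure is then induced up to the finite-index overgroup $G$. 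A small inaccuracy aside (for $g\in G$ one has $c_g(S')=\bigcup_i c_{h_i}\bigl(c_{g_{\sigma(i)}}(S)\bigr)$ with a \emph{different} inner automorphism $c_{h_i}$ on each piece, not a single one --- harmless, since each piece is still equivalent to $S$), the formal steps are sound.

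The genuine gap is your final step, which is circular as written: you assert that the acylindricity of the $G$-action on $\mathrm{Cay}(G,\widehat{S})$ ``is exactly the content of \cite{Minasyan2015}, Lemma 6'' --- that is, you defer the decisive estimate to the very statement being proved, so the proposal establishes nothing beyond what it cites. Moreover this step is not just bookkeeping to be ``made precise''; two distinct verifications are needed. First, acylindricity of $H$ on $\mathrm{Cay}(H,S')$ must be transported to the $H$-action on $\mathrm{Cay}(G,\widehat{S})$. Acylindricity is \emph{not} in general preserved by equivariant quasi-isometries; what rescues the argument here is that the $H$-equivariant embedding $\mathrm{Cay}(H,S')\hookrightarrow \mathrm{Cay}(G,\widehat{S})$ has coarsely dense image (finite index), which lets one pull back small displacements of far-apart points with controlled constants. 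This in turn requires the bi-Lipschitz comparison of $d_{S'}$ and $d_{\widehat{S}}$ on $H$, proved by a rewriting argument that pushes transversal letters to one end of a word using uniform equivalence constants for $c_t(S)\sim S$ over the finitely many $t\in T$ --- the same unproved comparison on which your hyperbolicity claim silently rests. Second, one passes from $H$ to $G$: partition the almost-stabiliser $\{g\in G : d(x,gx)\leq\varepsilon,\ d(y,gy)\leq\varepsilon\}$ by the cosets $Ht_i$; if $g,g'$ lie in the same coset then $h=g'g^{-1}\in H$ displaces $gx$ and $gy$ by at most $2\varepsilon$, so each coset contributes at most $N(2\varepsilon)$ elements and the set has size at most $[G:H]\cdot N(2\varepsilon)$. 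Your phrase about ``enlarging the bound for $H$ by the index $n$'' gestures at this coset argument, but neither it nor the transfer step is carried out, and without them the proof is incomplete.
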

\comm{
\myRed{TO DO:} AH-accessible and Genevois combined arguments. Definition of AH-accessible.\\}
\comm{
First let's consider RAAGs. 
Also in \cite{Abbott2019} is the following result:

\begin{lemma}\label{lemma: RAAG AH-acc}(\cite{Abbott2019}, Lemma 7.16)\\
If $\Gamma$ is connected, then $A_{\Gamma}$ is strongly $\mathcal{AH}$-accessible, and hence $\mathcal{AH}$-accessible. 

\end{lemma}
So immediately we can say that if $\RAAG$ is acylindrically hyperbolic on a connected graph, then the extension $A_{\phi}$ will also be acylindrically hyperbolic. 
\par}
\vspace{-10pt}
RAAGs are acylindrically hyperbolic if they are not cyclic or a direct product (see \cite{Osin2015}). To study the $\mathcal{AH}$-accessible property in RAAGs and RACGs, we need to look at \emph{hierarchically hyperbolic groups} (HHGs). These were first defined by Behrstock, Hagen and Sisto in 2017 (see \cite{Behrstock2019}, Definition 1.21 for the definition of a HHG, and \cite{Sisto2017} for a survey on these groups). By Proposition B and Theorem G from \cite{Behrstock2017}, RAAGs and RACGs are HHGs. 

\begin{prop}\label{prop:HHGS accessible}
Every HHG is $\mathcal{AH}$-accessible.

\end{prop}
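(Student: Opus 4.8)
The plan is to reformulate $\mathcal{AH}$-accessibility as the existence of a largest element in a poset of actions, and then to locate that element inside the hierarchically hyperbolic structure. Recall from Abbott--Balasubramanya--Osin \cite{Abbott2019} that $\mathcal{AH}(G)$ denotes the set of equivalence classes of cobounded acylindrical actions of $G$ on (Gromov) hyperbolic spaces, ordered by domination, and that $G$ is $\mathcal{AH}$-\emph{accessible} exactly when $\mathcal{AH}(G)$ has a largest element. Thus the statement to prove is that every HHG admits a largest acylindrical action on a hyperbolic space; the accessibility conclusion is then immediate, since the class of such an action is by definition the top of $\mathcal{AH}(G)$.

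First I would fix a hierarchically hyperbolic structure $(G, \mathfrak{S})$, where $\mathfrak{S}$ is the index set carrying the nesting relation $\sqsubseteq$ and each domain $U \in \mathfrak{S}$ has an associated hyperbolic space $\mathcal{C}U$ with an equivariant projection. The structure has a unique $\sqsubseteq$-maximal domain $S$, and $G$ acts coboundedly on the top-level space $\mathcal{C}S$. The key structural input, due to Behrstock--Hagen--Sisto \cite{Behrstock2019}, is that this action of $G$ on $\mathcal{C}S$ is acylindrical, so its class is a genuine element of $\mathcal{AH}(G)$.

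It then remains to show that this class is the largest element of $\mathcal{AH}(G)$: every cobounded acylindrical action of $G$ on a hyperbolic space must be dominated by the top-level action. This is the heart of the matter and is precisely the largest-acylindrical-action theorem of Abbott, Behrstock and Durham. The strategy is to compare actions through their loxodromic elements: an element of $G$ is loxodromic in some acylindrical action if and only if it is a generalised loxodromic, and for an HHG this happens if and only if it acts loxodromically on $\mathcal{C}S$; since domination in $\mathcal{AH}(G)$ is governed by the inclusions between the associated sets of loxodromics, this forces the action on $\mathcal{C}S$ to dominate every other.

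The main obstacle I anticipate lies entirely in this final comparison, namely verifying that $\mathcal{C}S$ captures all of the acylindrical geometry of $G$ and that no other acylindrical action exhibits loxodromic behaviour invisible to $\mathcal{C}S$. This step is not soft: it needs the full HHS machinery (the distance formula, realisation, and the classification of elements by their projections across domains) as developed by Abbott, Behrstock and Durham, and this is where essentially all the work resides. Granting that input, $\mathcal{AH}$-accessibility of every HHG follows formally, completing the proof and, in combination with \cref{thm:acy hyp extensions}, the surrounding argument for virtual RAAGs and RACGs.
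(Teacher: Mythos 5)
Your proposal is correct and follows essentially the same route as the paper: both unpack $\mathcal{AH}$-accessibility as the existence of a largest element in the poset $\mathcal{AH}(G)$ and then invoke the Abbott--Behrstock--Durham theorem (Theorem A of \cite{Abbott2021}) that every HHG admits a largest acylindrical action, so the HHS machinery you sketch is entirely black-boxed inside that citation. One small caveat: ABD's largest action is the top-level action of a suitably \emph{modified} HHG structure rather than necessarily the action on $\mathcal{C}S$ of the given structure, so your identification of the largest element should be left to the cited theorem, though this is immaterial here since only existence is needed.
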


\begin{proof}
By definition (see \cite{Abbott2019}), a group is $\mathcal{AH}$-accessible if the poset $\mathcal{AH}(G)$, consisting of hyperbolic structures corresponding to acylindrical actions, contains the largest element. Theorem A from \cite{Abbott2021} states that every HHG admits a largest acylindrical action, and so is $\mathcal{AH}$-accessible.
\end{proof}

\begin{proof}[Proof of \cref{cor:acyhyp2}] Let $H = \RAAG$ be acylindrically hyperbolic, where $\RAAG$ is a RAAG. By \cref{prop:HHGS accessible}, $H$ is $\mathcal{AH}$-accessible, and so $G$ is acylindrically hyperbolic by \cref{thm:acy hyp extensions}. Hence by \cref{thm:acy hyp languages}, $\mathsf{ConjSL}(G, X)$ is not unambiguous context-free for any generating set $X$. The proof is analogous for RACGs.
\end{proof}

\comm{
\begin{cor}\label{cor:acy hyper}
Let $A_{\Gamma}$ be directly indecomposable. Then for the extension $A_{\Gamma} \trianglelefteq A_{\phi}$ (of finite index), we have that $\mathsf{ConjSL}(A_{\phi})$ is not unambiguous context-free. 

\end{cor}

\begin{proof}
By \cref{lemma: RAAG acy hyp} and \cref{cor:ah access both}, $A_{\Gamma}$ is acylindrically hyperbolic and $\mathcal{AH}$-accessible. So \cref{thm:acy hyp extensions} implies that $A_{\phi}$ is acylindrically hyperbolic. Hence by \cref{thm:acy hyp languages}, $\mathsf{ConjSL}(A_{\phi})$ is not unambiguous context-free. 
\end{proof}

\begin{cor}
Let $W_{\Gamma}$ be a RACG which is acylindrically hyperbolic. Then for the extension $W_{\Gamma} \trianglelefteq W_{\phi}$ (of finite index), we have that $\mathsf{ConjSL}(W_{\phi})$ is not unambiguous context-free. 
\end{cor}

\begin{proof}
Similar to proof of \cref{cor:acy hyper}. 
\end{proof}}
\comm{
These results have been proven independently by Genevois without the requirement of $\mathcal{AH}$-accessibility. 

\begin{thm}(\cite{Genevois2018}, Corollary 1.8)\\
The semidirect product $\RAAG \rtimes_{\phi} H$ is acylindrically hyperbolic if and only if:
\begin{itemize}
    \item $\RAAG$ is acylindrically hyperbolic
    \item the kernel of $H \xrightarrow{\phi}  \mathrm{Aut}(\RAAG) \rightarrow \mathrm{Out}(\RAAG) $ is finite.
\end{itemize}
Similarly the semi-direct product $W_{\Gamma} \rtimes_{\phi} H$ is acylindrically hyperbolic if and only if:
\begin{itemize}
    \item $W_{\Gamma}$ is acylindrically hyperbolic
    \item the kernel of $H \xrightarrow{\phi}  \mathrm{Aut}(\RAAG) \rightarrow \mathrm{Out}(\RAAG) $ is finite.
\end{itemize}
\end{thm}
For our purposes, since we're only considering finite extensions, $H$ is finite and so the second requirement is always true. }
\subsection{Virtual graph products}
\vspace{5pt}
We now return our attention to extensions of graph products $G_{\phi} = G_{\Gamma} \rtimes_{\phi} \langle t \rangle$ as defined in \eqref{eqn: extension}. To study $\mathsf{ConjSL}\left(G_{\phi}, \widehat{X}\right)$, we again focus on twisted conjugacy in the base group (recall \cref{def:ConjSL}). The following result will be useful later, and motivates the study of $\mathsf{ConjSL}\left(G_{\phi}, \widehat{X}\right)$.

\begin{prop}\label{prop:2 cases regular not CF}
Let $G = N \rtimes_{\phi} \langle t \rangle$ be a group extension as defined in \eqref{eqn:original split}. 
\begin{enumerate}
    \item If $\mathsf{ConjSL}_{\phi^{l}}(N, X)$ is regular \textbf{for all $l$}, then $\mathsf{ConjSL}\left(G, \widehat{X}\right)$ is regular.
    \item If $\mathsf{ConjSL}_{\phi^{l}}(N, X)$ is not context-free (or unambiguous context-free) \textbf{for some $l$}, then $\mathsf{ConjSL}\left(G, \widehat{X}\right)$ is not context-free (or unambiguous context-free).
\end{enumerate}
\end{prop}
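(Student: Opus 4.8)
The plan is to adapt, almost verbatim, the two arguments already used for $\mathsf{ConjGeo}\left(G_\phi,\widehat X\right)$ in \cref{thm:ConjGeo Regular} and \cref{prop:conjgeo not cf}, taking as the single structural input the decomposition
\[ \mathsf{ConjSL}\left(G_{\phi}, \widehat{X}\right) = \bigcup_{l=1}^{m}\{t^{l}U \mid U \in \mathsf{ConjSL}_{\phi^{l}}(G_{\Gamma}, X) \}, \]
recorded just above the statement (with the ordering $t<_{SL}x$ for all $x\in X$). The only genuine difference from the $\mathsf{ConjGeo}$ setting is that here I must also track the unambiguous context-free case, for which the stated closure lemmas have to be applied a little more carefully.

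For part (1), I would fix $l$ and note that $\{t^{l}\}$ is a singleton, hence regular, while $\mathsf{ConjSL}_{\phi^{l}}(G_{\Gamma},X)$ is regular by hypothesis; their concatenation $\{t^{l}\}\cdot\mathsf{ConjSL}_{\phi^{l}}(G_{\Gamma},X)$ is therefore regular by \cref{lem: closure}. Since $t$ has finite order $m$, the union over $1\le l\le m$ is a finite union of regular languages, hence regular again by \cref{lem: closure}. This is exactly the proof of \cref{thm:ConjGeo Regular}.

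For part (2), I would fix an index $l$ witnessing the hypothesis and first show the single piece $P_l:=\{t^{l}U \mid U \in \mathsf{ConjSL}_{\phi^{l}}(G_{\Gamma}, X)\}$ inherits the relevant non-closure property. Applying the monoid homomorphism $h$ that deletes $t$ and fixes $X$ gives $h(P_l)=\mathsf{ConjSL}_{\phi^{l}}(G_{\Gamma}, X)$, so by \cref{lem:homs reverse} a context-free $P_l$ would force $\mathsf{ConjSL}_{\phi^{l}}(G_{\Gamma}, X)$ to be context-free, a contradiction. (The suffix lemma \cref{prop:suffixes} could be used in the style of \cref{prop:conjgeo not cf}, but $h$ is more transparent.) I would then pass to the whole language: since the $P_l$ are pairwise disjoint (each word carries exactly $l$ leading copies of $t$ followed by a $t$-free word), we have $\mathsf{ConjSL}(G_\phi,\widehat X)\cap t^{l}X^{*}=P_l$, so if $\mathsf{ConjSL}(G_\phi,\widehat X)$ were context-free then \cref{lem:intersection} would make $P_l$ context-free, contradicting the previous step.

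The main point requiring care — and the only real obstacle beyond copying the $\mathsf{ConjGeo}$ proof — is the parenthetical unambiguous context-free case, since \cref{prop:suffixes} is stated only for plain context-freeness and arbitrary homomorphisms need not preserve unambiguity. Here I would use that on $P_l$ the map $h$ (stripping the fixed prefix $t^{l}$) is a \emph{bijection} onto $\mathsf{ConjSL}_{\phi^{l}}(G_{\Gamma},X)$, and that a homomorphism injective on a language preserves unambiguity: the derivation trees of a grammar for $P_l$ correspond bijectively to those of the image grammar, and injectivity forbids two preimages sharing an image, so each image word keeps a unique parse. Thus $P_l$ unambiguous context-free implies $\mathsf{ConjSL}_{\phi^{l}}(G_{\Gamma},X)$ unambiguous context-free. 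The passage to the full union is then identical, using part (1) of \cref{lem:intersection} (intersection of an unambiguous context-free language with the regular language $t^{l}X^{*}$ is unambiguous context-free). Assembling these steps yields both implications.
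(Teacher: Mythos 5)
Your proposal is correct, and its skeleton is exactly the paper's: the paper does not write out a proof of \cref{prop:2 cases regular not CF} at all, but declares it ``similar to'' \cref{thm:ConjGeo Regular} and \cref{prop:conjgeo not cf}, i.e.\ part (1) by concatenation and finite union of regular languages (\cref{lem: closure}), and part (2) by first showing the single piece $P_{l}=\{t^{l}U \mid U \in \mathsf{ConjSL}_{\phi^{l}}(G_{\Gamma},X)\}$ cannot be context-free and then cutting the full language down to $P_{l}$ via intersection with the regular set $t^{l}X^{*}$ (\cref{lem:intersection}). Where you genuinely diverge, and improve on the model proof, is the first step of part (2). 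The paper's template (\cref{prop:conjgeo not cf}) invokes \cref{prop:suffixes}, but that is slightly loose on two counts: $\mathsf{suff}(P_{l})$ properly contains $\mathsf{ConjSL}_{\phi^{l}}(G_{\Gamma},X)$ (it also picks up all suffixes of the words $U$), so an extra extraction step is silently needed; and \cref{prop:suffixes} is stated only for plain context-freeness, so the parenthetical unambiguous case is not literally covered by the cited lemmas. Your alternative --- the erasing homomorphism $h$ with $h(t)=\epsilon$, $h|_{X}=\mathrm{id}$, giving $h(P_{l})=\mathsf{ConjSL}_{\phi^{l}}(G_{\Gamma},X)$ exactly via \cref{lem:homs reverse}, together with the observation that $h$ is injective on $P_{l}$ and that an injective-on-the-language homomorphism preserves unambiguity (derivation trees of the image grammar biject with those of the original, and injectivity prevents two parses colliding on one image word) --- is both cleaner and closes the unambiguous gap that the paper's sketch leaves open. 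The remainder (disjointness of the pieces, $\mathsf{ConjSL}(G_{\phi},\widehat{X}) \cap t^{l}X^{*} = P_{l}$, and \cref{lem:intersection} in both its unambiguous and plain forms) matches the paper exactly.
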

\begin{proof}
    Recall by \cref{def:ConjSL} that
\[ \mathsf{ConjSL}\left(G, \widehat{X}\right) = \bigcup_{l=a}^{b}\{Ut^{l} \mid U \in \mathsf{ConjSL}_{\phi^{l}}(N, X) \}.
\]
First suppose  $\mathsf{ConjSL}_{\phi^{l}}(N, X)$ is regular for all $l$. For fixed $l$, the languages $\{t^{l}\}$ (singleton language) and $\mathsf{ConjSL}_{\phi^{l}}(N, X)$ are both regular, and so the concatenation of these languages is regular by \cref{lem: closure}. Since $t$ has finite order, the union of these regular languages across all possible $l$ must also be regular, which proves $(i)$.

For $(ii)$, choose $l$ such that $\mathsf{ConjSL}_{\phi^{l}}(N, X)$ is not context-free (this exists by assumption). Suppose the set $\{Ut^{l} \mid U \in \mathsf{ConjSL}_{\phi^{l}}(N, X) \} $ is context-free. Then by \cref{prop:suffixes}, $\mathsf{ConjSL}_{\phi^{l}}(N, X)$ is context-free. This contradicts our assumption and so $\{Ut^{l} \mid U \in \mathsf{ConjSL}_{\phi^{l}}(N, X) \}$ is not context-free. 
\par 
Now suppose $\mathsf{ConjSL}\left(G, \widehat{X}\right)$ is context-free. Then consider
\[ \mathsf{ConjSL}\left(G, \widehat{X}\right) \cap X^{*}\cdot t^{l} = \mathsf{ConjSL}_{\phi^{l}}(N, X)\cdot t^{l},
\]
for $l$ such that $\mathsf{ConjSL}_{\phi^{l}}(N, X)$ is not context-free. By \cref{lem:intersection}, $\mathsf{ConjSL}_{\phi^{l}}(N, X)\cdot t^{l}$ is context-free. But this contradicts our previous claim, and so $\mathsf{ConjSL}\left(G, \widehat{X}\right)$ is not context-free. The proof is analogous when we replace context-free with unambiguous context-free.
\end{proof}

\comm{
\begin{lemma}\label{prop:suffixes}
If $L$ is a context-free language, then so is 
\[ \mathsf{suff}(L) = \{ u \in \Sigma^{*} \; | \; vu \in L \}
\]
\end{lemma}

\begin{proof}
First note that the set of prefixes $\mathsf{pref}(L) = \mathsf{suff}(L)^{R}$, so it remains to show that the set of prefixes is context-free by \cref{lem:homs reverse}. First we take a copy of our alphabet and denote as:
\[ \overline{\Sigma} = \{\overline{a} \; | \; a \in \Sigma^{*} \}
\]
Define the following morphisms
\begin{align*}
    g,h: (\Sigma \cup \overline{\Sigma}^{*}) &\rightarrow \Sigma^{*} \\
    g(a) &= a \\
    g(\overline{a}) &= \epsilon \\
    h(a) &= a \\
    h(\overline{a}) &= a
\end{align*}
Then 
\[ \mathsf{pref}(L) = g(h^{-1}(L) \cap \Sigma^{*}\overline{\Sigma}^{*})
\]
The result then follows by \cref{lem: closure} and \cref{lem:homs reverse}. 

\end{proof}
We now prove the second statement of \cref{prop:2 cases regular not CF}.
\begin{proof}
First note
\[ \mathsf{ConjSL}(G_{\phi}) = \bigcup_{l=1}^{m}\{t^{l}U \; | \; U \in \phi^{l}-\mathsf{ConjSL}(G_{\Gamma}, X) \} 
\]
Choose $l$ such that $\phi^{l}-\mathsf{ConjSL}(G_{\Gamma})$ is not context-free (this exists by assumption). Suppose the set $\{t^{l}U \; | \; U \in \phi^{l}-\mathsf{ConjSL}(G_{\Gamma}, X) \} $ is context-free. Then by Proposition \ref{prop:suffixes}, $\phi-\mathsf{ConjSL}(G_{\Gamma}, X)$ is context-free. But our assumption contradicts this claim.\\
Now suppose $\mathsf{ConjSL}(G_{\phi})$ is context-free. Then consider
\[ \mathsf{ConjSL}(G{\phi}) \cap t^{l}S^{*} = t^{l}\phi^{l}-\mathsf{ConjSL}(G_{\Gamma}, X)
\]
for $l$ such that $\phi^{l}-\mathsf{ConjSL}(G_{\Gamma})$ is not context-free. By \cref{lem:intersection}, it must be that $t^{l}\phi^{l}-\mathsf{ConjSL}(G_{\Gamma}, X)$ is context-free. But this contradicts our claim, and so $\mathsf{ConjSL}(G_{\phi})$ is not context-free.

\end{proof}}
\vspace{-10pt}
We now consider more general properties of $\mathsf{ConjSL}\left(G_{\phi}, \widehat{X}\right)$. We first consider irreducible graph products, that is, graph products which cannot be written as a direct product, using the following dichotomy by Minasyan and Osin. 
\comm{
To do so we use the following dichotomy from Osin \cite{Osin2015}:

\begin{lemma}\label{lemma: RAAG acy hyp}
Every RAAG is either:
\begin{itemize}
    \item infinite cyclic 
    \item directly decomposable, i.e. $A_{\Gamma} = A_{\Gamma_{1}} \times A_{\Gamma_{2}}$
    \item acylindrically hyperbolic
\end{itemize}

\end{lemma}
Note there does not exist a RAAG which is both a direct product and acylindrically hyperbolic due to the following result and the fact that RAAGs are torsion free.
\begin{thm}(\cite{Osin2015}, Corollary 7.2)\\
If $G$ is acylindrically hyperbolic and decomposes as a product $G = G_{1} \times G_{2}$, then one of the product groups must be finite.

\end{thm}}
\begin{cor}\label{cor:dicotomy}(\cite{Minasyan2015}, Corollary 2.13) Let $G_{\Gamma}$ be an irreducible graph product, where $V(\Gamma) \geq 2$. Then $G_{\Gamma}$ is either virtually cyclic or acylindrically hyperbolic. 

\end{cor}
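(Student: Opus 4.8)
The plan is to reduce this group-theoretic dichotomy to a graph condition and then feed a Bass--Serre tree action into a general acylindricity criterion. First I would record the standard characterisation of reducibility: a graph product $G_\Gamma$ splits as a direct product exactly when $\Gamma$ is a \emph{join}, i.e. when the complement graph $\overline{\Gamma}$ is disconnected. Hence for $|V(\Gamma)| \geq 2$, irreducibility of $G_\Gamma$ is equivalent to $\overline{\Gamma}$ being connected. In particular $\Gamma$ is not complete, and---since every vertex has positive degree in $\overline{\Gamma}$---every vertex $v$ admits a non-neighbour in $\Gamma$.

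Next I would exploit the canonical vertex splitting of a graph product. For any $v$ one has $G_{\mathrm{St}(v)} = G_v \times G_{\mathrm{Lk}(v)}$, and
\[ G_\Gamma = G_{\Gamma \setminus \{v\}} *_{G_{\mathrm{Lk}(v)}} \bigl(G_v \times G_{\mathrm{Lk}(v)}\bigr). \]
Because $G_v$ is nontrivial and $v$ has a non-neighbour (so $G_{\mathrm{Lk}(v)}$ is a proper subgroup of $G_{\Gamma \setminus \{v\}}$), both vertex groups strictly contain the edge group, so this is a nontrivial splitting and yields a minimal action of $G_\Gamma$ on the associated Bass--Serre tree $T$ with no global fixed point. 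Note this covers the disconnected case uniformly: if $\mathrm{Lk}(v) = \emptyset$ the splitting degenerates to the free product $G_{\Gamma \setminus \{v\}} * G_v$.

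The heart of the argument is then the Minasyan--Osin criterion for groups acting on trees: a group acting minimally on a tree, with a suitably (weakly) acylindrical action and no fixed end, is either virtually cyclic or acylindrically hyperbolic. Applying it here has two parts. First one must rule out the elementary (fixed-end) possibility by producing two independent loxodromic elements---for example alternating products of elements of $G_v$ with elements of the vertex group of a non-neighbour of $v$---which is straightforward from normal-form considerations for graph products. Second, and this is the step I expect to be the main obstacle, one must verify the acylindricity hypothesis: the edge stabilisers $G_{\mathrm{Lk}(v)}$ are typically infinite, so the action is not acylindrical in the naive sense, and one has to use the commutation structure of the graph product to show that the pointwise stabiliser of a sufficiently long segment of $T$ is controlled, so that the action is acylindrical in the weak sense required by the theorem. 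Granting this, the dichotomy follows, and a short case analysis shows the virtually cyclic alternative only arises in the degenerate configuration $G_\Gamma \cong \Z/2\Z * \Z/2\Z \cong D_\infty$; every other irreducible graph product on $\geq 2$ vertices is acylindrically hyperbolic.
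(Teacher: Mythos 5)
Two things, one about context and one about the mathematics.

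Context: the paper does not prove this statement at all. It is quoted verbatim from Minasyan and Osin (\cite{Minasyan2015}, Corollary 2.13), so there is no ``paper proof'' to compare against; the relevant comparison is with Minasyan--Osin's own argument. Your sketch in fact shadows their proof: they also use the canonical splitting $G_{\Gamma} = G_{\Gamma \setminus \{v\}} \ast_{G_{\mathrm{Lk}(v)}} \bigl(G_{v} \times G_{\mathrm{Lk}(v)}\bigr)$ and feed the Bass--Serre tree action into their general criterion for groups acting on trees (a group acting minimally on a tree, fixing no end, and admitting two vertices whose common pointwise stabiliser is finite, is either virtually cyclic or acylindrically hyperbolic). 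Several of your intermediate observations are correct and used in the right way: the reduction ``irreducible $\Rightarrow$ complement connected'' only needs the easy implication (join $\Rightarrow$ direct product), so you do not need the full biconditional you assert; the splitting is nontrivial because $G_{v} \neq 1$ and $v$ has a non-neighbour, hence the action is minimal without global fixed point; independent loxodromics rule out fixed ends; and the identification of $\Z/2\Z \ast \Z/2\Z \cong D_{\infty}$ as the only virtually cyclic case is right.

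The genuine gap is exactly the step you defer with ``granting this'': verifying the stabiliser hypothesis is the heart of the matter, not a technicality that can be assumed. Edge stabilisers of this tree are conjugates of $G_{\mathrm{Lk}(v)}$ and are typically infinite, and the finiteness condition on pairs of vertices is precisely what \emph{fails} when $\Gamma$ is a join, so irreducibility must be consumed at this point of the argument --- a proof that never uses it beyond ``$v$ has a non-neighbour'' cannot be complete. Note also that the condition actually required is weaker than the segment-stabiliser control you describe (one good pair of vertices suffices, not all long segments), but even that weaker statement needs a real argument. In Minasyan--Osin's proof it is supplied by the structure theory of parabolic subgroups of graph products: the intersection of two conjugates of parabolic subgroups is again a conjugate of a parabolic (Antol\'in--Minasyan), and connectedness of the complement graph lets one choose conjugating elements (taken from vertex groups of non-neighbours, exactly the ``commutation structure'' you allude to) that cut $G_{\mathrm{Lk}(v)} \cap g\,G_{\mathrm{Lk}(v)}\,g^{-1}$ down to the trivial subgroup, producing two tree vertices with trivial common stabiliser. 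Without carrying out (or at least precisely quoting) this computation, the tree criterion cannot be invoked and the proposed proof is incomplete.
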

\vspace{-10pt}
Recall the \emph{spherical conjugacy growth series}, denoted $\tilde{\sigma}(G,X)$, to be the strict growth series of $\mathsf{ConjSL}(G,X)$, for a group $G$ with respect to a generating set $X$.
\comm{
\vspace{-10pt}
Note some of these properties are not disjoint. For example if we take the RACG on the following defining graph
\[\begin{tikzcd}
	x & y & z
	\arrow[no head, from=1-1, to=1-2]
	\arrow[no head, from=1-2, to=1-3]
\end{tikzcd}\]
then $W_{\Gamma} \cong D_{\infty} \times \Z_{2}$, which is both a direct product and virtually cyclic. }
\comm{
\subsection{Direct products}
In this section we focus on RAAGs only. We start with the following motivation:
\begin{prop}
Let $A_{\Gamma} = F_{n} \times H$ for some RAAG $H$. Then $\mathsf{ConjSL}(A_{\phi})$ is not context-free for any finite extension of the form (\ref{eqn: extension}). 
\end{prop}

\begin{proof}
Let $\phi \in \mathrm{Aut}(\RAAG)$ be of finite order $t$. Then 
\[ \phi^{t}-\mathsf{ConjSL}(F_{n} \times H) \cong \mathsf{ConjSL}(F_{n} \times H)
\]
$\mathsf{ConjSL}(F_{n} \times H)$ is not context-free by Corollary 2.3 of \cite{Ciobanu2013}. Hence for the extension $A_{\phi}$, $\mathsf{ConjSL}(A_{\phi})$ is not context-free by \cref{prop:2 cases regular not CF}. 
\end{proof}

\begin{thm}\label{thm: extension not UCF}
Let $A_{\Gamma} = A_{\Gamma_{1}} \times A_{\Gamma_{2}}$ where $A_{\Gamma_{1}}, A_{\Gamma_{2}}$ are directly indecomposable RAAGs such that $\Gamma_{1}$ and $\Gamma_{2}$ contain at least 2 vertices. Then $\mathsf{ConjSL}(A_{\phi})$ of the finite extension is not unambiguous context-free.
\end{thm}

\begin{proof}
First note by assumption that $A_{\Gamma_{1}}, A_{\Gamma_{2}}$ must be acylindrically hyperbolic by \cref{lemma: RAAG acy hyp}. Let $S_{1}, S_{2}$ be finite generating sets for $A_{\Gamma_{1}}, A_{\Gamma_{2}}$ respectively. We claim that
\[ \mathsf{ConjSL}(A_{\Gamma}) \cap S^{\ast}_{1} = \mathsf{ConjSL}(A_{\Gamma_{1}})
\]
The implication $\subseteq$ is clear. For the other direction, we recall that two cyclically reduced words representing elements in a RAAG are conjugate if and only if they are related by a finite sequence of cyclic permutations and commutation relations. In particular, $\mathsf{ConjSL}$ for any RAAG is precisely the set of all cyclically reduced words which are shortlex shortest in their conjugacy class. 
\par 
Suppose there exists a word $x \in \mathsf{ConjSL}(A_{\Gamma_{1}})$ where $x \not \in \mathsf{ConjSL}(A_{\Gamma})$. Then there exists a word $y \in \mathsf{ConjSL}(A_{\Gamma})$ such that $x \sim y$ and $y <_{SL} x$. This means there exists a finite sequence of cyclic permutations and commutation relations from $x$ to $y$. Neither of these moves changes the letters in the word, and so $y$ is precisely a word over $S_{1}$. Hence by the $\subseteq$ implication, $y \in \mathsf{ConjSL}(A_{\Gamma_{1}})$. But since $y <_{SL} x$, this gives a contradiction that $x \not \in \mathsf{ConjSL}(A_{\Gamma_{1}})$. 
\par
Suppose $\mathsf{ConjSL}(A_{\Gamma})$ is unambiguous context-free. Then by \cref{lem:intersection}, this implies $\mathsf{ConjSL}(A_{\Gamma_{1}})$ is also unambiguous context-free. But $A_{\Gamma_{1}}$ is acylindrically hyperbolic and so this language is not unambiguous context-free by \cref{thm:acy hyp languages}, hence a contradiction.\\
Therefore $\mathsf{ConjSL}(A_{\Gamma})$ is not unambiguous context-free. Now by applying \cref{prop:2 cases regular not CF}, by taking $\phi^{l}$ to be trivial, we get that $\mathsf{ConjSL}(A_{\phi})$ is not unambiguous context-free.

\end{proof}
We can now say that for any RAAG, either acylindrically hyperbolic or a direct product, that the language $\mathsf{ConjSL}(A_{\phi})$ is not unambiguous context-free. }
\begin{lemma}
Let $G_{\Gamma}$ be an irreducible graph product, where $V(\Gamma) \geq 2$. Let $G_{\phi}$ be a virtual graph product of the form $G_{\phi} = G_{\Gamma} \rtimes_{\phi} \langle t \rangle$ (as defined in \eqref{eqn: extension}), where $\widehat{X}$ is the standard generating set for $G_{\phi}$, endowed with an order. If $G_{\phi}$ is not virtually abelian, the language $\mathsf{ConjSL}\left(G_{\phi}, \widehat{X}\right)$ is not unambiguous context-free. Otherwise, the spherical conjugacy growth series $\tilde{\sigma}\left(G_{\phi}, \widehat{X}\right)$ is rational. 
\end{lemma}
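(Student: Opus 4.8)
The plan is to dispose of the statement by a clean dichotomy on $G_{\phi}$, wiring together the machinery already assembled rather than proving anything new from scratch. First I would split into the two cases named in the statement. If $G_{\phi}$ \emph{is} virtually abelian there is nothing group-theoretic left to do: \cref{prop:all gen}(1) gives that $\tilde{\sigma}(G_{\phi}, \widehat{X})$ is rational with respect to \emph{any} generating set, in particular $\widehat{X}$. Hence the entire content lies in the case where $G_{\phi}$ is not virtually abelian, where the goal becomes to show that $\mathsf{ConjSL}(G_{\phi}, \widehat{X})$ is not unambiguous context-free.

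For that case the first step is to pin down the geometry of the base group. Since $G_{\Gamma}$ is irreducible with at least two vertices, the Minasyan--Osin dichotomy \cref{cor:dicotomy} says $G_{\Gamma}$ is either virtually cyclic or (non-elementary) acylindrically hyperbolic. I would rule out the virtually cyclic option by a commensurability argument: $G_{\Gamma}$ has finite index in $G_{\phi}$, and virtual cyclicity is a commensurability invariant, so if $G_{\Gamma}$ were virtually cyclic then $G_{\phi}$ would be virtually cyclic, hence virtually abelian, contradicting the case hypothesis. Therefore $G_{\Gamma}$ is non-elementary acylindrically hyperbolic.

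The second step transports the language-theoretic conclusion from the base to the extension. Applying \cref{thm:acy hyp languages} to $G_{\Gamma}$ gives that $\mathsf{ConjSL}(G_{\Gamma}, X)$ is not unambiguous context-free. The key observation is that ordinary conjugacy is twisted conjugacy for the trivial automorphism: taking $l = m$ in the decomposition $\mathsf{ConjSL}(G_{\phi}, \widehat{X}) = \bigcup_{l=1}^{m}\{t^{l}U \mid U \in \mathsf{ConjSL}_{\phi^{l}}(G_{\Gamma}, X)\}$, and using that $\phi$ has order dividing $m$ so that $\phi^{m} = \mathrm{id}$ and $\mathsf{ConjSL}_{\phi^{m}}(G_{\Gamma}, X) = \mathsf{ConjSL}(G_{\Gamma}, X)$, I would invoke \cref{prop:2 cases regular not CF}(2) with this single value of $l$ to conclude that $\mathsf{ConjSL}(G_{\phi}, \widehat{X})$ is not unambiguous context-free.

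There is no deep obstacle here; the work is entirely in sequencing the existing results, and the only points needing care are the commensurability step (virtually cyclic $G_{\Gamma}$ forces virtually abelian $G_{\phi}$) and the identification of the trivial twisted language with the untwisted one. As an alternative to the final step, one could instead show $G_{\phi}$ is itself non-elementary acylindrically hyperbolic and apply \cref{thm:acy hyp languages} to $G_{\phi}$ directly: when $G_{\Gamma}$ is a hierarchically hyperbolic group it is $\mathcal{AH}$-accessible by \cref{prop:HHGS accessible}, and \cref{thm:acy hyp extensions} would then promote acylindrical hyperbolicity to $G_{\phi}$. I expect the route through \cref{prop:2 cases regular not CF}(2) to be preferable, since it requires no $\mathcal{AH}$-accessibility input and hence imposes no restriction on the vertex groups of $G_{\Gamma}$.
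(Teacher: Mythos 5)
Your proposal is correct and follows essentially the same route as the paper: both apply the Minasyan--Osin dichotomy (\cref{cor:dicotomy}) to $G_{\Gamma}$, invoke \cref{thm:acy hyp languages} in the acylindrically hyperbolic case and lift the non-unambiguous-context-free conclusion to $G_{\phi}$ via \cref{prop:2 cases regular not CF}(2) at the trivial power of $\phi$ (your $l=m$ is the paper's $l=0$), and quote Evetts' rationality result in the virtually abelian case. The only difference is organizational---you split on whether $G_{\phi}$ is virtually abelian and add the (correct) commensurability remark to rule out the virtually cyclic case, whereas the paper splits directly on the two alternatives for $G_{\Gamma}$; this is bookkeeping, not a different argument.
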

\vspace{-10pt}
We recall that even if the series $\tilde{\sigma}\left(G_{\phi}, \widehat{X}\right)$ is rational, this does not necessarily imply regularity of the language $\mathsf{ConjSL}\left(G_{\phi}, \widehat{X}\right)$. This can be seen by \cref{language:qi} for virtually abelian groups.
\vspace{-10pt}
\begin{proof}
By \cref{cor:dicotomy}, we need to check two cases. If $G_{\Gamma}$ is virtually cyclic, it is necessarily virtually abelian. Therefore $G_{\phi}$ is virtually abelian, and so $\tilde{\sigma}\left(G_{\phi}, \widehat{X}\right)$ is rational \cite{Evetts2019}. If $G_{\Gamma}$ is acylindrically hyperbolic but not virtually cyclic, then $\mathsf{ConjSL}(G_{\Gamma}, X)$ is not unambiguous context-free by \cref{thm:acy hyp languages}. Hence $\mathsf{ConjSL}\left(G_{\phi}, \widehat{X}\right)$ is not unambiguous context-free by \cref{prop:2 cases regular not CF}, by considering $l = 0$. 
\end{proof}
\vspace{-10pt}
When proving results for graph products, a common technique is to study whether certain properties are closed under taking graph products. For example, it was shown in \cite[Theorem 3.1]{Ciobanu2013}, that regularity of the conjugacy geodesic language is preserved by graph products.
\par 
This method is less clear when we consider the language $\mathsf{ConjSL}(G_{\Gamma}, X)$. For example, consider the graph product $G_{\Gamma} = F_{2} = \Z \ast \Z$. Here $\mathsf{ConjSL}(\Z, X)$ is regular over any generating set $X$, however $\mathsf{ConjSL}(F_{2}, \{x, y\}^{\pm})$ is not context-free by \cite[Proposition 2.2]{Ciobanu2013} (where $\{x,y\}$ is a free basis). We generalise this result further by considering subsets of the vertex groups in a graph product. 

\begin{defn}
Let $\Gamma$ be a graph, and let $S \subseteq V(\Gamma)$. We say $\Lambda \subseteq \Gamma$ is an \emph{induced subgraph} of $\Gamma$ if $\Lambda$ is the graph with vertices $S$, such that each pair of vertices in $\Lambda$ are connected by an edge in $\Lambda$ if and only if they are connected by an edge in $\Gamma$.
\end{defn}
\vspace{-10pt}
Let $G_{\Gamma}$ be a graph product with vertex groups $G_{i} = \langle X_{i} \rangle$. Let $\Lambda \subseteq \Gamma$ be an induced subgraph of $\Gamma$. We define $G_{\Lambda}$ to be the \emph{induced graph product}, with respect to $\Lambda$. Here we let $X_{\Lambda}$ denote the standard generating set for $G_{\Lambda}$, where $X_{\Lambda} = \bigcup_{v \; \in \; V(\Lambda)} X_{v}$.
\begin{defn}
Let $G_{\Gamma} = \langle X \rangle$ be a graph product, with vertex groups $G_{v}$ for every $v \in V(\Gamma)$. Let $g = g_{1}\dots g_{n} \in X^{\ast}$ be a geodesic, where each $g_{i} \in G_{v}$ for some $v \in V(\Gamma)$. We define the \emph{support} of $g$ as
\[ \text{supp}(g) = \{ v \in V(\Gamma) \; | \; \text{there exists} \; i \in \{1, \dots, n \}  \;\text{such that} \; g_{i} \in G_{v} \setminus \{1\} \}.
\]
\end{defn}
\vspace{-10pt}
It was shown in Lemma 3.12 of \cite{Ferov2016} that if two cyclically reduced elements $x,y \in G_{\Gamma}$ are conjugate, then $\text{supp}(x) = \text{supp}(y)$. We use this result to prove the following.
\begin{prop}\label{prop:new thm}
Let $G_{\Gamma}$ be a graph product with standard generating set $X$, endowed with an order. For an induced subgraph $\Lambda \subseteq \Gamma$, let $G_{\Lambda}$ be the induced graph product with respect to $\Lambda$, with standard generating set $X_{\Lambda}$, with an induced ordering from $X$. If $\mathsf{ConjSL}(G_{\Lambda}, X_{\Lambda})$ is not regular, then $\mathsf{ConjSL}(G_{\Gamma}, X)$ is not regular. This result also holds when we replace regular with either unambiguous context-free or context-free.
\end{prop}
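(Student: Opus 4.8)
The plan is to reduce everything to a single set equality,
\[ \mathsf{ConjSL}(G_{\Gamma}, X) \cap X_{\Lambda}^{\ast} = \mathsf{ConjSL}(G_{\Lambda}, X_{\Lambda}), \]
and then to invoke closure properties. Since $X_{\Lambda}^{\ast}$ is regular, once this identity is in hand the proposition follows by contraposition: if $\mathsf{ConjSL}(G_{\Gamma}, X)$ were regular, its intersection with the regular language $X_{\Lambda}^{\ast}$ would be regular by \cref{lem: closure}, forcing $\mathsf{ConjSL}(G_{\Lambda}, X_{\Lambda})$ to be regular, contrary to hypothesis. For the context-free and unambiguous context-free statements the same argument runs verbatim, using \cref{lem:intersection} in place of \cref{lem: closure}.

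So the work is all in the set equality, and here the ordering on $X_{\Lambda}$ being induced from $X$ is what makes the two shortlex orders compatible. For the inclusion $\subseteq$ I would argue directly: given $w \in \mathsf{ConjSL}(G_{\Gamma}, X) \cap X_{\Lambda}^{\ast}$, every word over $X_{\Lambda}$ representing a $G_{\Lambda}$-conjugate of $w$ also represents a $G_{\Gamma}$-conjugate of $w$, since $G_{\Lambda} \leq G_{\Gamma}$. As $w$ is shortlex-least over $X$ among representatives of its $G_{\Gamma}$-class, it is a fortiori shortlex-least over $X_{\Lambda}$ among representatives of its $G_{\Lambda}$-class, hence $w \in \mathsf{ConjSL}(G_{\Lambda}, X_{\Lambda})$.

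The reverse inclusion $\supseteq$ is the crux, and is where I expect to spend the most care. Starting from $w \in \mathsf{ConjSL}(G_{\Lambda}, X_{\Lambda})$, I would first note that being shortlex-least in its $G_{\Lambda}$-conjugacy class forces $w$ to be cyclically reduced; and because $\Lambda$ is an induced subgraph, the commutation relations available to a word over $X_{\Lambda}$ are identical in $G_{\Lambda}$ and in $G_{\Gamma}$, so $w$ remains cyclically reduced in $G_{\Gamma}$. Let $y = z_{c}$ be the shortlex-least representative of the $G_{\Gamma}$-conjugacy class of $w$; then $y$ is cyclically reduced and $G_{\Gamma}$-conjugate to $w$. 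Applying Lemma 3.12 of \cite{Ferov2016}, cyclically reduced conjugate elements of a graph product have equal support, so $\text{supp}(y) = \text{supp}(w) \subseteq V(\Lambda)$, whence $y \in X_{\Lambda}^{\ast}$. Moreover the conjugacy criterion for graph products \cite{GreenThesis} relates $y$ and $w$ by a finite sequence of cyclic permutations and commutation relations; since these moves only permute and shuffle syllables already present (all lying in $X_{\Lambda}$) and use only commutations valid in $G_{\Lambda}$, the words $y$ and $w$ are $G_{\Lambda}$-conjugate. If $w$ were not in $\mathsf{ConjSL}(G_{\Gamma}, X)$ we would have $y <_{sl} w$, producing a word over $X_{\Lambda}$ that is $G_{\Lambda}$-conjugate to $w$ and strictly shortlex-smaller, contradicting $w \in \mathsf{ConjSL}(G_{\Lambda}, X_{\Lambda})$.

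The main obstacle is exactly this $\supseteq$ direction: one must rule out both that passing to the larger group $G_{\Gamma}$ could push the shortlex representative out of $X_{\Lambda}^{\ast}$, and that $G_{\Gamma}$ could introduce genuinely new conjugacies among words supported on $\Lambda$. The first is controlled by the support invariance of cyclically reduced conjugates (Ferov), and the second by the fact that the conjugating moves in a graph product never create letters outside the existing support; the induced-subgraph hypothesis is precisely what guarantees these moves are simultaneously valid in $G_{\Lambda}$ and $G_{\Gamma}$.
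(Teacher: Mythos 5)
Your proof is correct and follows essentially the same route as the paper: the same set equality $\mathsf{ConjSL}(G_{\Gamma}, X) \cap X_{\Lambda}^{\ast} = \mathsf{ConjSL}(G_{\Lambda}, X_{\Lambda})$, the same appeal to support invariance of cyclically reduced conjugates (Lemma 3.12 of \cite{Ferov2016}), and the same closure-property argument via \cref{lem: closure} and \cref{lem:intersection}. You even make explicit a step the paper leaves implicit, namely that the cyclic permutations and commutations relating $y$ and $w$ stay inside $G_{\Lambda}$ (using that $\Lambda$ is induced), so the two words are $G_{\Lambda}$-conjugate and the shortlex contradiction goes through.
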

\vspace{-10pt}
\begin{proof}
We claim that
\[ \mathsf{ConjSL}(G_{\Gamma}, X) \cap X_{\Lambda}^{\ast} = \mathsf{ConjSL}(G_{\Lambda}, X_{\Lambda}).
\]
The implication $\subseteq$ is clear. For the other direction, suppose there exists a word \\ $x \in \mathsf{ConjSL}(G_{\Lambda}, X_{\Lambda})$ such that $x \not \in \mathsf{ConjSL}(G_{\Gamma}, X)$. Then there exists a word $y \in \mathsf{ConjSL}(G_{\Gamma}, X)$ such that $x \sim y$ and $y <_{SL} x$. By definition, both $x$ and $y$ are cyclically reduced in the graph product, and hence $\text{supp}(x) = \text{supp}(y)$. Since $\text{supp}(x)$ contains only vertices from the vertex groups which generate $G_{\Lambda}$, so must $\text{supp}(y)$. Hence $y$ is precisely a word over $X_{\Lambda}$, and so $y \in \mathsf{ConjSL}(G_{\Lambda}, X_{\Lambda})$ by the $\subseteq$ implication. Now we have a contradiction since $x \in \mathsf{ConjSL}(G_{\Lambda}, X_{\Lambda})$. 
\par 
Now suppose $\mathsf{ConjSL}(G_{\Lambda}, X_{\Lambda})$ is not regular, but $\mathsf{ConjSL}(G_{\Gamma}, X)$ is regular. Then we reach a contradiction by \cref{lem: closure}, which completes the proof. This argument also holds for unambiguous context-free and context-free using \cref{lem:intersection}.
\end{proof}

\begin{cor}\label{cor:graph product extension conjsl}
    Let $G_{\phi}$ be a virtual graph product of the form $G_{\phi} = G_{\Gamma} \rtimes_{\phi} \langle t \rangle$ (as in \cref{eqn: extension}), where $\widehat{X}$ is the standard generating set for $G_{\phi}$, endowed with an order. For an induced subgraph $\Lambda \subseteq \Gamma$, let $G_{\Lambda}$ be the induced graph product with respect to $\Lambda$, with standard generating set $X_{\Lambda}$, with an induced ordering from $X$. If $\mathsf{ConjSL}(G_{\Lambda}, X_{\Lambda})$ is not regular, then $\mathsf{ConjSL}\left(G_{\phi}, \widehat{X}\right)$ is not regular. This result also holds when we replace regular with either unambiguous context-free or context-free.
\end{cor}

\begin{proof}
This result follows from \cref{prop:new thm} and \cref{prop:2 cases regular not CF}, by considering $l=0$. 
\end{proof}

\comm{
\subsection{Virtual RACGs}
We can say a bit more about the conjugacy growth series for RACGs. Behrstock, Hagen and Sisto proved in \myRed{CITE} that any RACG must be either relatively hyperbolic or thick. We can use this, alongside the following result by Gekhtman and Yang \myRed{CITE} to categorise RACGs.
\begin{cor}
The conjugacy growth series of a non-elementary relatively hyperbolic group, with respect to any finite generating set, is transcendental.  
\end{cor}

\begin{thm}
Let $L = \mathsf{ConjSL}(W_{\phi}, X)$. Then either
\begin{itemize}
    \item $f_{L}(z)$ is rational if $W_{\phi}$ is virtually cyclic or virtually abelian, or
    \item $f_{L}(z)$ is transcendental if $W_{\Gamma}$
\end{itemize}
\end{thm}}

\comm{
For RAAGs, we can prove an even stronger result. We begin with some example where twisted $\mathsf{ConjSL}$ languages are not context-free.
\par 
\begin{defn}
A graph is \emph{asymmetric} or \emph{rigid} if there are no non-trivial graph automorphisms. 
\end{defn}

\begin{prop}\label{prop:inversion not CF}
Let $\RAAG$ be defined on a graph $\Gamma$ where
\begin{enumerate}
    \item $|\Gamma| \geq 2$, and 
    \item There exists a non-edge between at least 2 vertices in $\Gamma$, i.e. there exists two generators $a, b \in \RAAG$ such that $[a,b] \neq 1$.
\end{enumerate}
Let $\phi \in \mathsf{Aut}(\RAAG)$ be a composition of inversions. Then $\phi-\mathsf{ConjSL}(\RAAG)$ is not context-free.
\end{prop}
Note that if $\phi$ is trivial and $|\Gamma| = 2$, then $A_{\Gamma} \cong F_{2}$ and the result follows by Proposition 2.2 of \cite{Ciobanu2013}. 

\begin{proof}
Assume $a,b$ are two generators in $\RAAG$ such that $[a,b] \neq 1$. The proof of this Proposition is identical to the proof for $F_{2}$ once we verify that the parameters on the set $I$ are the same. For notation, define a map
\begin{align*}
    \widehat{\phi}: \{1,-1\} &\rightarrow \{1,-1\} \\
    x & \mapsto
    \begin{cases}
    x & \phi(x) = x \\
    -x & \phi(x) = x^{-1}
    \end{cases}
\end{align*}
Consider a word of the form
\[ w = a^{p}b^{l}a^{q}b^{j} \in I
\]
Here we assume $p,l,q,j > 0$. For $w \in \phi-\mathsf{ConjSL}$, we must check that no possible $\phi$-shifts form a shorter word. Since $\phi$ consists of inversions, letters will always be unable to shuffle after shifts. We see that the only possible words which could be shorter than $w$ are
\[ \phi(a^{q})\phi(b^{j})a^{p}b^{j}
\]
and 
\[ a^{q}b^{j}\phi(a^{p})\phi(b^{l})
\]
In the first case, this word is bigger than $w$ if either
\begin{itemize}
    \item $p > \widehat{\phi}(q)$, or
    \item $p=\widehat{\phi}(q)$, $l \leq \widehat{\phi}(j)$
\end{itemize}
Now if $\phi(q) = q^{-1}$, then the first point will always be true since $p$ is positive, and so we can rewrite this as $p > q$. Also the second case can only be true if $\phi$ fixes $q$ and $j$, and so overall these two conditions become:
\begin{itemize}
    \item $p > q$ or
    \item $p = q$, $l \leq j$
\end{itemize}
For the second case, this word is bigger than $w$ if either of the conditions above hold. \\
This matches the conditions required to use the proof for $F_{2}$. 

\end{proof}

\begin{cor}
Let $\Gamma$ be an asymmetric graph with at least 2 vertices, and let $\RAAG$ be the RAAG defined by $\Gamma$. Let $\psi \in \mathsf{Aut}(\RAAG)$ be length preserving. Then $\psi-\mathsf{ConjSL}(\RAAG)$ is not CF, and hence
\[ \mathsf{ConjSL}(A_{\phi}) \cong \RAAG \rtimes_{\phi} \mathbb{Z}_{2}
\] 
is not CF. 
\end{cor}

\begin{proof}
By Proposition \ref{prop:combo len p} and the assumption that $\Gamma$ has no non-trivial graph automorphisms, we can assume $\psi$ is a combination of inversions only (so has order 2). \\
We can also assume there exists at least 2 vertices in $\Gamma$ not connected by an edge, since otherwise $\Gamma \cong K_{n}$ which is not asymmetric. The result then follows by Proposition \ref{prop:inversion not CF}. 
\end{proof}
We can now show a more general result that covers a surprisingly large number of cases.

\begin{thm}(Erdos, Renyi)\\
Almost all graphs are asymmetric.
\end{thm}

\begin{cor}
For almost all RAAGs $\RAAG$, there exists a length preserving automorphism $\phi \in \mathsf{Aut}(\RAAG)$ such that $\phi-\mathsf{ConjSL}(\RAAG)$ is not context-free, and hence the extension $\mathsf{ConjSL}(\G)$ is not CF. 
\end{cor}
This in a sense answers in the affirmative cases that for any RAAG, there exists an automorphism $\phi$ such that $\mathsf{ConjSL}(A_{\phi})$ is not context-free.
\par}
\vspace{-10pt}
For RAAGs, we can prove a stronger result by considering the trivial automorphism. By showing $\mathsf{ConjSL}(\RAAG, X)$ is not context-free for any RAAG, we find that the equivalent language in the extension, $\mathsf{ConjSL}\left(A_{\phi}, \widehat{X}\right)$, is also not context-free (recall \cref{prop:2 cases regular not CF}).

\begin{prop}\label{thm: RAAG not CF}
Let $A_{\Gamma}$ be a RAAG which is not free abelian. Let $X$ be the standard generating set for $\RAAG$, endowed with an order. Then $\mathsf{ConjSL}(A_{\Gamma}, X)$ is not context-free.
\end{prop}

\begin{proof}
By assumption, there exists two generators $x,y \in V(\Gamma)$ such that the set $S = \{x,y\}^{\pm}$ generates a free group of rank two. We claim that
\[ \mathsf{ConjSL}(\RAAG, X) \cap S^{\ast} = \mathsf{ConjSL}(F_{2}, S).
\]
The proof follows similarly to the proof of \cref{prop:new thm}, recalling that in RAAGs, cyclically reduced conjugate elements are related by cyclic permutations and commutation relations.
\par
Suppose $\mathsf{ConjSL}(\RAAG, X)$ is context-free. By \cref{lem:intersection}, this would imply $\mathsf{ConjSL}(F_{2}, S)$ is also context-free. This is a contradiction since $\mathsf{ConjSL}(F_{2}, S)$ is not context-free by \cite[Proposition 2.2]{Ciobanu2013}. Therefore $\mathsf{ConjSL}(\RAAG, X)$ is not context-free. 
\end{proof}
\begin{cor}\label{cor: v RAAGs not CF}
Let $A_{\phi}$ be a virtual RAAG of the form $A_{\phi} = \RAAG \rtimes_{\phi} \langle t \rangle$ (as in \cref{eqn: extension}), where $\widehat{X}$ is the standard generating set for $A_{\phi}$, endowed with an order. If $A_{\Gamma}$ is not abelian, then $\mathsf{ConjSL}\left(A_{\phi},\widehat{X}\right)$ is not context-free. 

\end{cor}

\begin{proof}
The result follows by \cref{thm: RAAG not CF} and \cref{prop:2 cases regular not CF}.
\end{proof}

\comm{
\section{Summary of results}
Previous results: $\tilde{\sigma}$ = conjugacy growth series. $\ast$ = any generating set. PT = piecewise testable.
\begin{center}
    \begin{tabular}{ | c | c | c |c | c |}
    \hline
    \multicolumn{5}{c}{Previous results} \\
    \hline
    Reference & Group & $\mathsf{ConjGeo}$ & $\mathsf{ConjSL}$ & $\tilde{\sigma}$  \\
    \hline
    Thm 4.2 \cite{Ciobanu2016} & Virtually cyclic & Regular^{$\ast$} & Regular^{$\ast$} & Rational \\
    \hline
    Thm 3.1 \cite{Ciobanu2016}, Thm 1.1 \cite{Antolin2016a} & Hyperbolic & Regular^{$\ast$} & & Transcendental (if not v. cyclic) \\
    \hline
   Proposition 3.3 \cite{Ciobanu2016}, \cite{Evetts2019} & Virtually abelian & PT & & Rational \\
   \hline
   \cite{Ciobanu2013}, \cite{Ciobanu2016} & Graph products & Regular & Example: not regular & Example: not rational \\
   \hline 
   \cite{Contract22} & Relatively hyperbolic & & Not UCF* & Transcendental* \\
   \hline
   \cite{Antolin2016a} & Acylindrically hyperbolic & & Not UCF* & \\
   \hline 
\end{tabular}
\end{center}

\begin{center}
    \begin{tabular}{ | c | c | c |c | }
    \hline
    \multicolumn{4}{c}{New results (all w.r.t standard generating set)} \\
    \hline
    Reference & Group & $\mathsf{ConjGeo}$ & $\mathsf{ConjSL}$  \\
    \hline
    \cref{thm: RAAG not CF} & RAAGs & Previous & Not CF \\
    \hline
   \cref{thm:conjslgraph} & Graph products & Previous & Not UCF \\
   \hline
   \cref{thm:ConjGeo Regular} & V. RAAGs $A_{\phi}$ & Regular** & Not CF \\
   & V. RACGs $W_{\phi}$ & Regular ** & Not UCF  \\
   \hline
    \cref{cor:graphprod geo regular} & V. graph products $G_{\phi}$ & Regular (some cases) & Not UCF  \\
    \hline
\end{tabular}
\end{center}
**: all length preserving cases, some non-length preserving.
\par }
\vspace{-10pt}
As an aside, we find examples of RAAGs $\RAAG = \langle X \rangle$ and non-trivial automorphisms $\phi \in \mathrm{Aut}(\RAAG)$, such that the language $\mathsf{ConjSL}_{\phi}(\RAAG,X)$ is not context-free. Recall that changing generating sets can change the behaviour of the language $\mathsf{ConjSL}(G,X)$ (see \cref{language:qi}), and so this example investigates whether changing the ordering of our generating set, or changing the automorphism of the RAAG, can change the behaviour of $\mathsf{ConjSL}_{\phi}(\RAAG,X)$.

\begin{prop}\label{exp: change}
Let $A_{\Gamma} = F_{2} \times F_{2}$, and label the defining graph as follows:
\[\begin{tikzcd}
	a & b \\
	d & c
	\arrow[no head, from=1-1, to=1-2]
	\arrow[no head, from=1-2, to=2-2]
	\arrow[no head, from=2-2, to=2-1]
	\arrow[no head, from=1-1, to=2-1]
\end{tikzcd}\]
Let $\psi \colon a \rightarrow b \rightarrow c \rightarrow d$ be a rotation, and let $X$ be the standard generating set. Consider the following scenarios:
\begin{enumerate}
    \item[(1)] Order X as $a < a^{-1} < b < b^{-1} < c < c^{-1} < d < d^{-1}$, and consider the map $\phi = \psi$ or $\phi = \psi^{2}$. 
    \item[(2)] Order X as $a < a^{-1} < c < c^{-1} < b < b^{-1} < d < d^{-1}$, and consider the map $\phi = \psi$.
\end{enumerate}
In both cases, the language $\mathsf{ConjSL}_{\phi}(A_{\Gamma}, X)$ is not context-free.
\end{prop}
We note that \cref{cor:acyhyp2} does not apply to the following example, since direct products of infinite groups are not acylindrically hyperbolic (Corollary 7.2, \cite{Osin2015}). 
\begin{proof}
(1): First consider $\phi = \psi$. Let $Y = \{a,c\}^{\pm}$. We claim that 
\[ \mathsf{ConjSL}_{\psi}(\RAAG, X) \cap Y = \mathsf{ConjSL}(F_{2}, Y).
\]
For any word $w \in X^{\ast}$, we can write $w = \left(u\left(a^{\pm 1}, c^{\pm 1}\right), v\left(b^{\pm 1}, d^{\pm 1}\right)\right)$, where $u\left(a^{\pm 1}, c^{\pm 1}\right)$ denotes a word over $Y$ and $v\left(b^{\pm 1}, d^{\pm 1}\right)$ denotes a word over $Z = \{b,d\}^{\pm}$. Note that
\[ \psi(w) = \psi\left(u\left(a^{\pm 1}, c^{\pm 1}\right), v\left(b^{\pm 1}, d^{\pm 1}\right)\right) = \left(v\left(c^{\pm},a^{\pm}\right), u\left(b^{\pm},d^{\pm}\right)\right).
\]
Let $(g,1), (h,1) \in \mathsf{ConjSL}_{\psi}(\RAAG, X) \cap Y$. Then 
\begin{align*}
    (g,1) \sim_{\psi} (h,1) 
    &\Leftrightarrow \psi(w)\cdot (g,1) \cdot w^{-1} =_{\RAAG} (h,1) \\
    &\Leftrightarrow v\left(c^{\pm 1},a^{\pm 1}\right)u\left(b^{\pm 1},d^{\pm 1}\right)\cdot (g,1)\cdot v^{-1}\left(b^{\pm 1},d^{\pm 1}\right)u^{-1}\left(a^{\pm 1}, c^{\pm 1}\right) =_{\RAAG} (h,1) \\
    &\Leftrightarrow u\left(b^{\pm 1},d^{\pm 1}\right)v^{-1}\left(b^{\pm 1},d^{\pm 1}\right) =_{\langle Z \rangle} 1 \\
    &\Leftrightarrow u = v \\
    &\Leftrightarrow u\left(c^{\pm 1},a^{\pm 1}\right)\cdot (g,1)\cdot u^{-1}\left(a^{\pm 1}, c^{\pm 1}\right) =_{\RAAG} (h,1)\\
    &\Leftrightarrow g \sim h \; \text{in} \; F_{2} = \langle Y \rangle,
\end{align*}
which proves the claim. Now suppose $\mathsf{ConjSL}_{\psi}(\RAAG, X)$ is context-free. Then by \cref{lem:intersection}, $\mathsf{ConjSL}(F_{2}, Y)$ is context-free, which is a contradiction by \cite[Proposition 2.2]{Ciobanu2013}. Therefore $\mathsf{ConjSL}_{\psi}(\RAAG, X)$ is not context-free.

\comm{
(1): We claim that $\mathsf{ConjSL}_{\psi}(\RAAG,X) = \{a^{n} \mid n \in \Z\}$, which is regular. \\
First note that any word in $\mathsf{ConjSL}_{\psi}(\RAAG,X)$ must start with an $a$ letter - otherwise we could $\psi$-cyclically permute the entire word in each of the cases:
\begin{align*}
    b^{\pm 1}w &\xleftrightarrow{\psi} \psi^{3}\left(b^{\pm 1}\right)\psi^{3}(w) = a^{\pm 1}\psi^{3}(w),  \\
    c^{\pm 1}w &\xleftrightarrow{\psi} \psi^{2}\left(c^{\pm 1}\right)\psi^{2}(w) = a^{\pm 1}\psi^{2}(w), \\
    d^{\pm 1}w &\xleftrightarrow{\psi} \psi\left(d^{\pm 1}\right)\psi(w) = a^{\pm 1}\psi(w).
\end{align*}
We now rule out some cases. No words over $\{a,b\}$ can belong to $\mathsf{ConjSL}_{\psi}(\RAAG,X)$, since these would be of the form $a^{n}b^{m} =_{\RAAG} b^{m}a^{n}$, and we could $\psi$-cyclic shift all the $b$ letters from $b^{m}a^{n}$ to $a^{n+m}$ as follows:
\[ b^{m}a^{n} \xleftrightarrow{\psi} a^{n}\psi^{-1}(b^{m}) = a^{n+m},
\]
which is a smaller word lexicographically. Similarly we can rule out all words over $\{a,d\}$. \par 
Consider words over $\{a,c\}$. We can first rule out words of the form $a^{-1}w_{1}\dots w_{n}c$ or $aw_{1}\dots w_{n}c^{-1}$, where each $w_{i} \in \{a^{\pm1},c^{\pm 1}\}$, since we can apply $\psi$-cyclic shifts to obtain a shorter word. For example:
\begin{align*}
    a^{-1}w_{1}\dots w_{n}c &\xleftrightarrow{\psi} \psi(c)a^{-1}w_{1}\dots w_{n}\\
    &= da^{-1}w_{1}\dots w_{n}\\
    &=_{\RAAG} a^{-1}w_{1}\dots w_{n}d \\
    &\xleftrightarrow{\psi} \psi(d)a^{-1}w_{1}\dots w_{n}\\
    &= aa^{-1}w_{1}\dots w_{n},
\end{align*}
which after cancellation gives a word of shorter length. We also can't have words of the form $aw_{1}\dots w_{n}c$, where each $w_{i} \in \{a^{\pm1},c^{\pm 1}\}$ belong to $\mathsf{ConjSL}_{\psi}(\RAAG,X)$, since we can apply $\psi$-cyclic shifts to get
\begin{align*}
    aw_{1}\dots w_{n}c \xleftrightarrow{\psi} \psi(c)aw_{1}\dots w_{n} &= daw_{1}\dots w_{n} \\
    &=_{\RAAG} aw_{1}\dots w_{n}d \\
    &\xleftrightarrow{\psi} \psi(d)aw_{1}\dots w_{n} = aaw_{1}\dots w_{n}.
\end{align*}
In particular, $w_{1} = a$ must hold for the word to be in $\mathsf{ConjSL}_{\psi}(\RAAG,X)$. By induction, each $w_{i} = a$, but then we can $\psi$-cyclic shift to
\[ a\dots ac \xleftrightarrow{\psi} a^{n+2}.
\]
A similar argument holds for words of the form $a^{-1}w_{1}\dots w_{n}c^{-1}$. We can now rule out all words over $\{a,c\}$. Indeed, any word would have to be of the form $a^{\pm 1}w_{1}\dots w_{n}a^{\pm 1}$ where $w_{i} \in \{a^{\pm1},c^{\pm 1}\}$, which can be $\psi$-cyclic shifted to
\[ a^{\pm 1}w_{1}\dots w_{n}a^{\pm 1} \xmapsto{\psi} \psi\left(a^{\pm 1}\right)a^{\pm 1}w_{1}\dots w_{n} =b^{\pm 1}a^{\pm 1}w_{1}\dots w_{n} =_{\RAAG} a^{\pm 1}b^{\pm 1}w_{1}\dots w_{n}.
\]
Then $w_{1} = a$ must hold, and again by induction, $w_{i} = a$ for all $i$. Hence we can rule out all words $w$ such that $|\text{supp}(w)| = 2$.
\par 
Now consider 3 letters. If $w \in \{a,b,c\}^{\ast}$, then it can be written in the form $w = b^{n}x$ where $x \in \{a,c\}^{\ast}$. Then we can $\psi$-cyclic shift $b^{n}x$ to $xa^{n}$:
\[ b^{n}x \xleftrightarrow{\psi} x\psi^{-1}(b^{n}) = xa^{n},
\]
which can then be $\psi$-cyclic shifted to $a^{k}$. Similar arguments show that no words $w$ such that $|\text{supp}(w)| = 3$ belong to $\mathsf{ConjSL}_{\psi}(\RAAG,X)$. Finally suppose $|\text{supp}(w)| = 4$. Since $\RAAG$ is a product, we can write $w$ in the form $w_{1}w_{2}$, where $w_{1} \in \{a,c\}^{\ast}$, $w_{2} \in \{b,d\}^{\ast}$. This can be $\psi$-cyclic shifted to $\psi(w_{2})w_{1}$, a word over $\{a,c\}$, which can be $\psi$-cyclic shifted to $a^{k}$. 
\par 
We've now ruled out all cases except when a word is over one letter. The smallest lexicographically of these is $a^{n}$, which proves the claim. 
\par 
\comm{
\myRed{Example 2: ignore for now (need to check all cases)}
\par }
(2): Suppose $\psi$-\mathsf{ConjSL} is context-free and consider the intersection $I = \psi-\mathsf{ConjSL} \cap L$ where $L = a^{+}c^{+}a^{+}$. Since $L$ is a regular language, $I$ must be context-free by \cref{lem:intersection}. We first prove that all words in $I$ are of the form
\[ a^{p}c^{q}a^{r}
\]
and $I = I_{1} \sqcup I_{2}$ where 
\begin{align*}
    I_{1} &= \{a^{p}c^{q}a^{r} \mid p>q, p>r \} \\
    I_{2} &= \{a^{p}c^{q}a^{p} \mid q \leq p \}
\end{align*}
Let $ w = a^{p}c^{q}a^{r} \in \phi-\mathsf{ConjSL}$ - recall that by definition, $w$ is shortest, with respect to the ordering of the generators, in its respective $\psi$-conjugacy class. Since $a^{p}c^{q}a^{r}$ is $\psi$-CR and $\psi$ is length-preserving, the $\psi$-conjugacy class consists of all possible $\psi$-shifts of $a^{p}c^{q}a^{r}$. The images of any letters moved must lie in $\{a,d\}$, and so the only possible shifts which could be shorter than $w$ are 
\[ \phi(d^{q})\phi(a^{r})a^{p} = a^{q}d^{r}a^{p}
\]
or
\[ a^{r}\phi^{-1}(a^{p})\phi^{-1}(d^{q}) = a^{r}d^{p}a^{q}
\]
Therefore
\[ w = a^{p}d^{q}a^{r} \leq a^{q}d^{r}a^{p} \Rightarrow 
\begin{cases}
p > q \\
p = q, \; q \leq r
\end{cases}

\]
and 
\[ w = a^{p}d^{q}a^{r} \leq a^{r}d^{p}a^{q} \Rightarrow 
\begin{cases}
p > r \\
p = r, q \leq p
\end{cases}
\]
Comparing possible cases for both options gives the disjoint sets $I_{1}$ and $I_{2}$. \\
Let $k$ be the constant given by the Pumping Lemma for context-free languages applied to the set $I$ (see \cite{Hopcroft}, Theorem 7.18, Page 281) and consider the word
\[ W = a^{n}d^{n}a^{n}
\]
where $n > k$. Note that $W$ is composed of 3 blocks, namely $a^{n}, d^{n}$ and $a^{n}$. By the Pumping Lemma, $W$ can be written as $W = uvwxy$ where $l(vx) \geq 1, l(vwx) \leq k$ and $uv^{i}wx^{i}y \in I$ for all $i \geq 0$. Since $l(vwx) \leq k < n$, $vwx$ cannot be part of more than two consecutive blocks.\\
\textbf{Case 1: $vwx$ is part of one block only.}\\
a) $vwx$ is in first block, i.e.
\[ u\cdot vwx \cdot y = a^{n-s}\cdot a^{s}\cdot d^{n}a^{n}
\]
(for some $s$). For $i = 0$, 
\[ uwy = a^{n-s}a^{t}d^{n}a^{n}
\]
where $p = n-s+t < n$. Since $q = n$, $uwy$ cannot lie in either $I_{1}$ or $I_{2}$, which contradicts our assumption that $uwy \in I$ by the Pumping Lemma. \\
b) $vwx$ is in second block, i.e.
\[  u\cdot vwx \cdot y = a^{n}d^{s_{1}} \cdot d^{s_{2}} \cdot d^{n-s_{1}-s_{2}}a^{n}
\]
For $i \geq 2$, we have that $q > p$, so again $uv^{i}wx^{i} \not \in I$. \\
c) $vwx$ is in third block, i.e.
\[ u\cdot vwx \cdot y = a^{n}d^{n} \cdot a^{s} \codt a^{n-s}
\]
For $i$ = 0, we get than $r<n$ and $p= q = n$, so again $uwy \not \in I$. \\
\textbf{Case 2: $vwx$ is part of more than one block}.\\
Suppose $vwx$ contains both $a$ and $d$ letters. If one of $v$ or $x$ contains both $a$ and $d$, then for $i \geq 2$ the word $uv^{i}wx^{i}y$ contains at least four blocks alternating between powers of $a$ and $d$, and so can't lie in $a^{+}d^{+}a^{+}$. Therefore $v$ must be a power of one letter only, and $x$ must be a power of the other letter. \\
a) $v$ in first block, $x$ in second, i.e.
\[ uvwxy = a^{i_{1}}\cdot a^{i_{2}} \cdot a^{i_{3}}d^{j_{1}} \cdot d^{j_{2}} \cdot d^{j_{3}}a^{n}
\]
where $i_{1} + i_{2} + i_{3} = j_{1} + j_{2} + j_{3} = n$. For $i = 0$,
\[ uwy = a^{i_{1}}a^{i_{3}}d^{j_{1}}d^{j_{3}}a^{n}
\]
Here $p < n, q < n$ and $r = n$, so $uwy \not \in I$. \\
b) $v$ in second block, $x$ in third. Here for $i \geq 2$, the word $uv^{i}wx^{i}y$ is of the form where $p = n, q>n$ and $r>n$, so $uv^{i}wx^{i}y \not \in I$.\\
This covers all cases so we have a contradiction. Hence $\phi-\mathsf{ConjSL}(\RAAG, X)$ is context-free. 
\par }
Now consider $\phi= \psi^{2}$. Suppose $\mathsf{ConjSL}_{\phi}(\RAAG,X)$ is context-free and consider the intersection $I = \mathsf{ConjSL}_{\phi}(\RAAG,X) \cap L$ where $L = a^{+}c^{+}a^{+}$. Since $L$ is a regular language, $I$ must be context-free by \cref{lem:intersection}. We first prove that $I = I_{1} \sqcup I_{2},$ where 
\[ I_{1} = \{a^{p}c^{q}a^{r} \mid p>q, p>r \}, \quad I_{2} = \{a^{p}c^{q}a^{p} \mid p \geq q \}.
\]
Let $w = a^{p}c^{q}a^{r} \in \mathsf{ConjSL}_{\phi}(\RAAG,X)$. Since $a^{p}c^{q}a^{r}$ is $\phi$-CR and $\phi$ is length-preserving, the set of words of minimal length in the $\phi$-conjugacy class of $a^{p}c^{q}a^{r}$ consists of all possible $\phi$-cyclic permutations of $a^{p}c^{q}a^{r}$. The images of any letters by $\phi$ must lie in $\{a,c\}$, which do not commute in $\RAAG$, and so the only possible $\phi$-cyclic permutations which could be lexicographically less than $w$ are of the form
\[ \phi(c^{q})\phi(a^{r})a^{p} = a^{q}c^{r}a^{p} \quad \text{or} \quad a^{r}\phi^{-1}(a^{p})\phi^{-1}(c^{q}) = a^{r}c^{p}a^{q}.
\]
Therefore
\[ w = a^{p}c^{q}a^{r} \leq a^{q}c^{r}a^{p} \Rightarrow 
\begin{cases}
p > q, \\
p = q, q \leq r,
\end{cases}
\]
and 
\[ w = a^{p}c^{q}a^{r} \leq a^{r}c^{p}a^{q} \Rightarrow 
\begin{cases}
p > r, \\
p = r, p \geq q.
\end{cases}
\]
Comparing possible cases for both options gives the disjoint sets $I_{1}$ and $I_{2}$. 
\par
Now let $k$ be the constant given by the Pumping Lemma for context-free languages applied to the set $I$ (see \cite{Hopcroft}, Theorem 7.18, Page 281) and consider the word $W = a^{n}c^{n}a^{n}$, where $n > k$. Note that $W$ is composed of 3 blocks, namely $a^{n}, c^{n}$ and $a^{n}$. By the Pumping Lemma, $W$ can be written as $W = uvwxy$, where $l(vx) \geq 1, l(vwx) \leq k$ and $uv^{i}wx^{i}y \in I$ for all $i \geq 0$. Since $l(vwx) \leq k < n$, $vwx$ cannot be part of more than two consecutive blocks.
\par 
\textbf{Case 1: $vwx$ is part of one block only.}\\
a) $vwx$ lies in first block, i.e.
\[ u\cdot vwx \cdot y = a^{n-s-t}\cdot a^{s}\cdot a^{t}c^{n}a^{n}.
\]
For $i = 0$, 
\[ uwy = a^{n-s-t}\cdot a^{r}\cdot a^{t}c^{n}a^{n},
\]
where $r < s$ (cannot have $r=s$ since $l(vx) \geq 1$). If $uwy \in I$, then $p = n+r-s$ and $q=n$. Hence $p < q$, which is a contradiction. 

b) $vwx$ is in second block, i.e.
\[  u\cdot vwx \cdot y = a^{n}c^{s_{1}} \cdot c^{s_{2}} \cdot c^{n-s_{1}-s_{2}}a^{n}.
\]
For $i \geq 2$, we have that $p < q$, so again $uv^{i}wx^{i}y \not \in I$. \\
c) $vwx$ is in third block, i.e.
\[ u \cdot vwx \cdot y = a^{n}c^{n} \cdot a^{s} \cdot a^{n-s}.
\]
For $i$ = 0, we have that $r<n$ and $p= q = n$, so again $uwy \not \in I$. \par 
\textbf{Case 2: $vwx$ is part of more than one block}.\\
Suppose $vwx$ contains both $a$ and $c$ letters. If one of $v$ or $x$ contains both $a$ and $c$, then for $i \geq 2$ the word $uv^{i}wx^{i}y$ contains at least four blocks alternating between powers of $a$ and $c$, and so can't lie in $a^{+}c^{+}a^{+}$. Therefore $v$ must be a power of one letter only, and $x$ must be a power of the other letter. 

a) $v$ in first block, $x$ in second, i.e.
\[ uvwxy = a^{i_{1}}\cdot a^{i_{2}} \cdot a^{i_{3}}c^{j_{1}} \cdot c^{j_{2}} \cdot c^{j_{3}}a^{n},
\]
where $i_{1} + i_{2} + i_{3} = j_{1} + j_{2} + j_{3} = n$. For $i = 0$,
\[ uwy = a^{i_{1}}a^{i_{3}}c^{j_{1}}c^{j_{3}}a^{n}.
\]
Here $p < n, q < n$ and $r = n$, so $uwy \not \in I$. 

b) $v$ in second block, $x$ in third. Here for $i \geq 2$, the word $uv^{i}wx^{i}y$ is of the form where $p = n, q>n$ and $r>n$, so $uv^{i}wx^{i}y \not \in I$. This covers all cases and so $I$ is not context-free. Hence $\mathsf{ConjSL}_{\phi}(\RAAG, X)$ is not context-free. 
\par 
The proof for Case (2) is identical to the second part of Case (1), with some additional justification needed for the set of words $I$.
\end{proof}
\vspace{-10pt}
It remains unclear whether all twisted conjugacy languages $\mathsf{ConjSL}_{\phi}(\RAAG, X)$ are not context-free for RAAGs. Whilst we know that for any finite extension $A_{\phi}$, the language $\mathsf{ConjSL}\left(A_{\phi}, \widehat{X}\right)$ is not context-free by \cref{cor: v RAAGs not CF}, there could be examples of RAAGs and automorphisms where the language $\mathsf{ConjSL}_{\phi}(\RAAG, X)$ is regular.

\begin{question}
    Does there exist a RAAG $\RAAG = \langle X \rangle$ and non-trivial automorphism $\phi \in \mathrm{Aut}(\RAAG)$ such that $\mathsf{ConjSL}_{\phi}(\RAAG, X)$ is regular?
\end{question}

\section{Open questions}
We finish with some unanswered problems arising from this paper.
\par 
For conjugacy geodesics, when considering virtual RAAGs via length-preserving automorphisms, it remains unclear as to whether the language of conjugacy geodesics is regular or not.
\vspace{-10pt}
\begin{question}\label{question 1}
Let $A_{\phi}$ be a virtual RAAG (as defined in \eqref{eqn: extension}) such that $\phi \in \mathrm{Aut}(\RAAG)$ is a graph automorphism. Is $\mathsf{ConjGeo}\left(A_{\phi}, \widehat{X}\right)$ regular?
\end{question}
\vspace{-10pt}
We know in general that for graph automorphisms, $\mathsf{ConjGeo}_{\psi} \subsetneq \mathsf{CycGeo}_{\psi}$ (see \cref{exmp:clash languages}), and while $\mathsf{CycGeo}_{\psi}$ is regular, a subset of a regular language may not be regular. We can however give a partial answer to \cref{question 1}, when considering direct products of free groups. 
\par 
For $\psi \in \mathrm{Aut}(\RAAG)$, where $\RAAG = A_{\Gamma_{1}} \times A_{\Gamma_{2}}$, we say $\psi$ \emph{fixes elements within vertex groups} if for all $v \in V(\Gamma_{i})$, $\psi(v) \in A_{\Gamma_{i}}$. In this scenario, we can immediately deduce that
\[ \mathsf{ConjGeo}_{\psi}(\RAAG, X) = \mathsf{ConjGeo}_{\psi}(A_{\Gamma_{1}}, X_{1}) \cdot \mathsf{ConjGeo}_{\psi}(A_{\Gamma_{2}}, X_{2}),
\]
where $X_{i} = V(\Gamma_{i})^{\pm}$ for $i \in \{1,2\}$. By \cref{lem: closure}, $\mathsf{ConjGeo}_{\psi}(\RAAG, X)$ is regular if both $\mathsf{ConjGeo}_{\psi}(A_{\Gamma_{1}}, X_{1})$ and $\mathsf{ConjGeo}_{\psi}(A_{\Gamma_{2}}, X_{2})$ are regular. Using this result, we can find an example of a graph automorphism $\phi$ such that $\mathsf{ConjGeo}\left(A_{\phi}, \widehat{X}\right)$ is regular.
\begin{exmp}\label{exmp:regular conj}
Let $\RAAG = F_{2} \times F_{2}$ and label the defining graph as follows:
\[\begin{tikzcd}
	a & b \\
	d & c
	\arrow[no head, from=1-1, to=1-2]
	\arrow[no head, from=1-2, to=2-2]
	\arrow[no head, from=2-2, to=2-1]
	\arrow[no head, from=1-1, to=2-1]
\end{tikzcd}\]
Let $\phi \colon a \leftrightarrow c, \; b \leftrightarrow d$ be a reflection, and let $X$ be the standard generating set. To determine whether $\mathsf{ConjGeo}\left(A_{\phi}, \widehat{X}\right)$ is regular,  we apply a similar method as \cref{thm:conjgeo regular extension group}, and check regularity of two twisted conjugacy classes, namely $\mathsf{ConjGeo}(\RAAG, X)$ and $\mathsf{ConjGeo}_{\phi}(\RAAG, X)$. The first language is regular by \cite[Corollary 2.5]{Ciobanu2016}. Since $\phi$ fixes elements within the vertex groups, showing $\mathsf{ConjGeo}_{\phi}(\RAAG, X)$ is regular reduces to showing that $\mathsf{ConjGeo}_{\phi}\left(F_{2}, \{a,c\}^{\pm}\right)$ is regular. Note if $u,v \in \mathsf{ConjGeo}_{\phi}\left(F_{2}, \{a,c\}^{\pm}\right)$, then $u \sim_{\phi} v$ if and only if $u$ and $v$ are related via a sequence of $\phi$-cyclic permutations only (see \cref{cor:sequence}). It is then straightforward to show that the language $\mathsf{ConjGeo}_{\phi}\left(F_{2}, \{a,c\}^{\pm}\right)$ consists of all words $w \in \mathsf{Geo}\left(F_{2}, \{a,c\}^{\pm}\right)$, such that $w \not \in \{avc^{-1}, a^{-1}vc, cva^{-1}, c^{-1}va\}$ for all $v \in \mathsf{Geo}\left(F_{2}, \{a,c\}^{\pm}\right)$. This can be written as a regular expression, and so the language $\mathsf{ConjGeo}_{\phi}\left(F_{2}, \{a,c\}^{\pm}\right)$ is regular. Since both twisted classes are regular, $\mathsf{ConjGeo}_{\phi}\left(A_{\phi}, \widehat{X}\right)$ must also be regular.
\end{exmp}
\vspace{-10pt}
We note that if we consider the same RAAG with the rotation $\phi \colon a \rightarrow b \rightarrow c \rightarrow d$, then this question becomes more difficult. Here we would need to check four twisted classes. Two of these, namely $\mathsf{ConjGeo}$ and $\mathsf{ConjGeo}_{\phi^{2}}$, are regular by \cref{exmp:regular conj}, but it is still unclear whether $\mathsf{ConjGeo}_{\phi}$ or $\mathsf{ConjGeo}_{\phi^{3}}$ are regular or not. 
\par 
For the spherical conjugacy language, it would be interesting to see if we can strengthen our results in the case of extensions of RACGs.

\begin{question}
Does there exist a virtual RACG $W_{\phi} = W_{\Gamma} \rtimes_{\phi} \langle t \rangle$, such that $\mathsf{ConjSL}\left(W_{\phi}, \widehat{X}\right)$ is not context-free?
\end{question}
\vspace{-10pt}
We cannot take the same approach as \cref{thm: RAAG not CF}, since a similar intersection would give
\[ \mathsf{ConjSL}(W_{\Gamma}, X) \cap \{x, y\}^{\ast} = \mathsf{ConjSL}(D_{\infty}, \{x,y\}).
\]
This is inconclusive since $\mathsf{ConjSL}(D_{\infty}, \{x,y\})$ is regular \cite[Theorem 2.4]{Ciobanu2013}, so a new approach will be needed.
\par 
For more general graph products, it may be possible to extend \cref{cor:acyhyp2}, if the following two questions can be answered.

\begin{question}
When are graph products $\mathcal{AH}$-accessible?
\end{question}
\vspace{-10pt}
Some example of groups which are $\mathcal{AH}$-accessible can be found in \cite[Theorem 2.18]{Abbott2019}, which includes RAAGs.
\begin{question}\label{q:HHGs}
Are all graph products hierarchically hyperbolic groups?
\end{question}
\vspace{-10pt}
We mention a recent result, Theorem C of \cite{Berlyne2022}, which states that if each of the vertex groups of a graph product are hierarchically hyperbolic, then the graph product itself is a hierarchically hyperbolic group. In general, \cref{q:HHGs} is false, since there exists graph products with unsolvable word problem.

\section*{Acknowledgments}
I would like to thank my supervisor Laura Ciobanu for her mathematical guidance and support while writing my first paper. I would also like to thank Andrew Duncan, Michal Ferov, Alex Levine and Alessandro Sisto for helpful comments, as well as the anonymous referee for detailed comments and suggestions.

\appendix

\comm{
\section{$\mathsf{ConjGeo}$ for RAAGs}
Length preserving cases: in general 
\[ \psi-\mathsf{ConjGeo} \subset \psi-\mathsf{CycGeo}
\]
Still unclear in general whether $\psi-\mathsf{ConjGeo}$ is regular. Some subcases:
\subsection{Free groups}
Want all twisted classes to be regular since hyperbolic group. 
\begin{prop}
If $\RAAG \cong F_{n}$, then $\psi-\mathsf{ConjGeo} = \psi-\mathsf{CycGeo}$.
\end{prop}

\begin{proof}
Let $w \in \psi-\mathsf{CycGeo}$ and suppose $w$ is not $\psi$-cyclically reduced. Then there exists a sequence of $\psi$-shifts
\[ w = w_{0} \leftrightarrow w_{1} \leftrightarrow \dots \leftrightarrow w_{s}
\]
such that $w_{s}$ is not geodesic. By definition, $w_{s}$ is a $\psi$-cyclic permutation of $w$, and so $w_{s}$ must be geodesic since $w \in \psi-\mathsf{CycGeo}$. This gives a contradiction.

\end{proof}
Key point: problem with proving equality comes from shuffles. 
\myRed{Incorrect result}
\begin{prop}
Let $\psi$ be length preserving of order 2. Then $v \in \RAAG$ is $\psi$-CR if and only if $v$ cannot be written in the form
\[ v =_{\RAAG} \psi(u)^{-1}wu
\]
where $|w| < |v|$.
\end{prop}

\begin{proof}
The forward direction was proven in \cref{prop:cycle red}. For the reverse direction, suppose $v$ is not $\psi$-CR. By definition there exists an alternating sequence
\[ v = v_{0} \leftrightarrow v_{1} \leftrightarrow \dots \leftrightarrow v_{s}
\] 
of $\psi$-shifts and shuffles such that $|\mathrm{Geod}(v_{s})| < |v|$. We first prove the following:\\
\textbf{Claim}: let $x \leftrightarrow y$ be a $\psi$-shift. Then $x = \psi(z)^{-1}yz$ for some $z \in \RAAG$.\\
\textbf{Proof of Claim:} Let $x = x_{1}\dots x_{n}$. By definition,
\[ y = \psi(x_{i+1})\dots \psi(x_{n})x_{1}\dots x_{i}
\]
for some $i$. Hence we can write
\[ x = \psi(x_{i+1}\dots x_{n})^{-1}\cdot y \cdot x_{i+1}\dots x_{n}
\]
as required.\\
Suppose the sequence above is of the form
\[ v = v_{0} \xleftrightarrow{\psi} v_{1} \xleftrightarrow{E} v_{2} \xleftrightarrow{\psi} \dots \xleftrightarrow{\psi} v_{s}
\] 
By the Claim, $v_{0} = \psi(x_{0})^{-1}v_{1}x_{0} =_{\RAAG} \psi(x_{0})^{-1}v_{2}x_{0}$ for some $x_{0}$. Repeating the Claim we can say
\[ v_{2} = \psi(x_{2})^{-1}v_{3}x_{2} =_{\RAAG} \psi(x_{2})^{-1}v_{4}x_{2}
\]
and so 
\[ v_{0} =_{\RAAG} \psi(x_{0})^{-1}v_{2}x_{0} =_{\RAAG} \psi(x_{2}x_{0})^{-1}v_{4}x_{2}x_{0}
\]
Continuing in this way we get 
\[ v_{0} =_{\RAAG} \psi(w)^{-1}v_{s}w
\]
Since $|v_{s}| < |v_{0}|$, we reach a contradiction as required. The proof is analogous if the sequence starts with shuffling. 

\end{proof}

\begin{rmk}
This proof won't work for orders above 2. This is because the powers of the automorphism could differ when we take $\psi$-shifts. \\
Equivalence is up to cancellation here as well as shuffling.
\end{rmk}
Example: line on 4 points, auto = reflection. Then equality of sets doesn't hold (see Python).}

\begin{appendices}
\section{Conjugacy geodesics in virtual graph products}
In this section we prove \cref{cor:graphprod geo regular} - a twisted adaptation of Theorem 2.4 in \cite{Ciobanu2016}, with some restrictions on the vertex groups. Recall $G_{\Gamma} = \langle X \rangle$ denotes a graph product with standard generating set $X = \bigcup^{k}_{i=1} X_{i}$, where each vertex group $G_{i}$ is generated by $X_{i}$.

Before proving \cref{cor:graphprod geo regular}, we first establish some notation and related results. 

\begin{defn}
For $\psi \in \mathrm{Aut}(G_{\Gamma})$, we say $\psi$ \emph{fixes the vertex groups} if for all $x_{i} \in X_{i}$, $\psi(x_{i}) \in G_{i}$ for every vertex group $G_{i} = \langle X_{i} \rangle$. 
\end{defn}

\begin{defn}
    For all $1 \leq i \leq k$, define $\mathsf{Geo}_{i} := \mathsf{Geo}(G_{i}, X_{i})$ to be the set of geodesic words over each vertex group. Define the map $\rho_{i} \colon X^{*} \rightarrow (X_{i} \cup \$)^{*}$ where, for each $a \in X_{j}$, we set $\rho_{i}(a) := a$ if $i=j$, $\rho_{i}(a):= \epsilon$ if the vertices $i$ and $j$ are adjacent in $\Gamma$, and $\rho_{i}(a) := \$ $ otherwise (here $\epsilon$ denotes the empty word).

    For each $i$, we define the extension of $\psi$:
\begin{align*}
    \widehat{\psi} \colon (X_{i} \cup \$)^{*} &\rightarrow (X_{i} \cup \$)^{*} \\
    a &\mapsto \psi(a), \quad a \in X_{i}, \\
    \$ &\mapsto \$.
\end{align*}
In particular, $\rho_{i}(\psi(x)) = \widehat{\psi}(\rho_{i}(x))$ for all $x \in X$.
\end{defn}

\begin{lemma}\label{lemma: twisted cycles hat}
    Let $w \in X^{\ast}$. If $w' \in X^{\ast}$ is a $\psi$-cyclic permutation of $w$, then $\rho_{i}\left(w'\right)$ is a $\widehat{\psi}$-cyclic permutation of $\rho_{i}(w)$ (for all $1 \leq i \leq k$).
\end{lemma}

\begin{proof}
    Let $w = w_{1}\dots w_{n} \in X$, and suppose 
    \[ w' = \psi^{k}(w_{j+1})\dots \psi^{k}(w_{n})\psi^{k-1}(w_{1})\dots \psi^{k-1}(w_{j}),
\]
for some $1 \leq j \leq n$, $0 \leq k \leq m-1$. Then for all $i$,
\begin{align*}
    \rho_{i}\left(w'\right) &= \rho_{i}\left(\psi^{k}\left(w_{j+1}\right)\right)\dots \rho_{i}\left(\psi^{k}\left(w_{n}\right)\right)\rho_{i}\left(\psi^{k-1}\left(w_{1}\right)\right)\dots \rho_{i}\left(\psi^{k-1}\left(w_{j}\right)\right) \\
    &= \widehat{\psi^{k}}\left(\rho_{i}\left(w_{j+1}\right)\right)\dots \widehat{\psi^{k}}\left(\rho_{i}\left(w_{n}\right)\right)\widehat{\psi^{k-1}}\left(\rho_{i}\left(w_{1}\right)\right)\dots \widehat{\psi^{k-1}}\left(\rho_{i}\left(x_{j}\right)\right),
\end{align*}
which is precisely a $\widehat{\psi}$-cyclic permutation of $\rho_{i}(w)$. 
\end{proof}

The following result was shown in \cite{Ciobanu2013}.

\begin{lemma}\label{lemma: geo to funny geo}\cite[Proposition 3.3]{Ciobanu2013}
    A word $w \in X^{\ast}$ lies in $\mathsf{Geo}(G_{\Gamma},X)$ if and only if $\rho_{i}(w) \in \mathsf{Geo}_{i}(\$\mathsf{Geo}_{i})^{*}$ for all $1 \leq i \leq k$. 
\end{lemma}
 
\begin{prop}\label{lemma:conjgeo options}
Let $w \in X^{\ast}$. Let $\psi \in \mathrm{Aut}(G_{\Gamma})$ be length-preserving and fix the vertex groups of $G_{\Gamma}$. Then
\[ w \in \mathsf{ConjGeo}_{\psi}(G_{\Gamma},X) \Leftrightarrow \; \text{for all} \; 1 \leq i \leq k , 
\]
\[
\rho_{i}(w) \in \mathsf{ConjGeo}_{\psi}(G_{i}, X_{i}) \cup \{ u_{0}\$ u_{1} \dots \$ u_{n} \mid n \geq 1 \; \text{and} \; \psi(u_{n})u_{0}, u_{1} \dots, u_{n-1} \in \mathsf{Geo}_{i} \}.
\]
\end{prop}

\begin{proof}
($\Rightarrow$): Suppose $w \in \mathsf{ConjGeo}_{\psi}(G_{\Gamma},X)$. Then $w \in \mathsf{Geo}(G_{\Gamma},X)$ and so $\rho_{i}(w) \in \mathsf{Geo}_{i}(\$\mathsf{Geo}_{i})^{*}$ by \cref{lemma: geo to funny geo}. In particular, $\rho_{i}(w) = u_{0}\$ u_{1} \dots \$ u_{n}$, where each $u_{j} \in \mathsf{Geo}_{i}$ ($0 \leq j \leq n)$.
\par 
\textbf{Case 1: $n=0$}. Here $\rho_{i}(w) = u_{0} \in \mathsf{Geo}_{i}$. In particular, we can write $w =_{G_{\Gamma}} u_{0}v$, where all letters from $v$ lie in vertex groups $X_{j}$, such that the vertices $i,j \in V(\Gamma)$ are connected by an edge in $\Gamma$. Suppose $u_{0} \not \in \mathsf{ConjGeo}_{\psi}(G_{i}, X_{i})$. Then there exists $g \in \mathsf{Geo}_{i}$ such that
\[ \psi(g)^{-1}u_{0}g =_{G_{\Gamma}} z \in G_{i},
\]
where $l(z) < l(u_{0})$. This implies that
\[ \psi(g)^{-1}wg =_{G_{\Gamma}} \psi(g)^{-1}u_{0}vg =_{G_{\Gamma}} zv.
\]
Here $l(zv)<l(w)$, and so $w \not \in \mathsf{ConjGeo}_{\psi}(G_{\Gamma},X)$, which is a contradiction. Hence $\rho_{i}(w) \in \mathsf{ConjGeo}_{\psi}(G_{i}, X_{i})$.
\par 
\textbf{Case 2: $n>0$}. By \cref{lemma: twisted cycles hat}, there exists $w' \in X^{\ast}$ such that
\[ \rho_{i}\left(w'\right) = \psi(u_{n})u_{0}\$ u_{1} \dots \$ u_{n-1}\$.
\]
Since $w'$ is geodesic, $\rho_{i}\left(w'\right) \in \mathsf{Geo}_{i}(\$\mathsf{Geo}_{i})^{*}$ by \cref{lemma: geo to funny geo}, and so $\psi(u_{n})u_{0} \in \mathsf{Geo}_{i}$ as required.
\par 
($\Leftarrow$): From the assumption it follows that $\rho_{i}(w) \in \mathsf{Geo}_{i}(\$\mathsf{Geo}_{i})^{*}$, and so $w \in \mathsf{Geo}(G_{\Gamma}, X)$ by \cref{lemma: geo to funny geo}. Therefore we can write $\rho_{i}(w) = w_{i}u_{i}w'_{i}$, where $w_{i}, w'_{i} \in \mathsf{Geo}_{i}$, and either $u_{i} = w'_{i} = \epsilon$ or $u_{i} \in \$ (\mathsf{Geo}_{i}\$)^{*}$. Suppose $w \not \in \mathsf{ConjGeo}_{\psi}(G_{\Gamma}, X)$. Then there exists $y \in \mathsf{Geo}(G_{\Gamma}, X)$ such that $\psi(y)wy^{-1} =_{G_{\Gamma}} x$ and $l(x) < l(w)$. Here we choose $x$ and $y$ to be of minimal length, and note $l(y) = l(\psi(y))$. Since $y \in \mathsf{Geo}(G_{\Gamma}, X)$, we can write $\rho_{i}(y) = v_{i}y_{i}$, where $v_{i} \in (\mathsf{Geo}_{i}\$)^{*}$ and $y_{i} \in \mathsf{Geo}_{i}$ by \cref{lemma: geo to funny geo}. We consider the two scenarios for $\rho_{i}(w)$.

\textbf{Case 1:} $\rho_{i}(w) = w_{i} \in \mathsf{Geo}_{i}$. This implies $\rho_{i}(\psi(y)^{-1}xy) \in X_{i}$, so we can assume $v_{i} = \epsilon$, i.e
\[ \rho_{i}(y) = y_{i}, \quad \rho_{i}(\psi(y)) = \psi(y_{i}) \quad \Rightarrow \rho_{i}(w) =_{G_{\Gamma}} \psi(y_{i})^{-1}\rho_{i}(x)y_{i}.
\]
Since $\rho_{i}(w) \in \mathsf{ConjGeo}_{\psi}(G_{i}, X_{i})$, then $l(\rho_{i}(w))\leq l(\rho_{i}(x))$. 

\comm{
Since $\psi$ fixes elements within vertex groups, we can assume that $\rho_{i}(\psi(y)) = \widehat{\psi}(v_{i})\psi(y_{i})$. Let $G^{\ast}_{i} = G_{i} \ast \langle \$ \rangle$. Then 
\[ \rho_{i}(x) =_{G^{\ast}_{i}} \rho_{i}\left(\psi(y)wy^{-1}\right) =_{G^{\ast}_{i}} \widehat{\psi}(v_{i})\psi(y_{i})\cdot w_{i}u_{i}w'_{i} \cdot y^{-1}_{i}v^{-1}_{i}.
\]}
Let $l_{i}(u)$ be the number of letters in a word $u \in (X_{i} \cup \$)^{*}$ which lie in $X_{i}$. Then for any $z \in X^{\ast}$,
\[ l(z) = \sum_{i=1}^{k} l_{i}(\rho_{i}(z)).
\]
Note if $x \in X_{i}$, then $l_{i}(\rho_{i}(x)) = l(\rho_{i}(x))$. We now have
\[
    l(x) = \sum^{k}_{i=1} l_{i}(\rho_{i}(x))
    = \sum^{k}_{i=1} l(\rho_{i}(x)) 
    \geq \sum^{k}_{i=1} l(\rho_{i}(w)) 
    = \sum^{k}_{i=1} l_{i}(\rho_{i}(w)) = l(w),
\]
which contradicts our assumption that $l(x) < l(w)$.\\
\comm{
In particular
\begin{equation}\label{eqn:2}
    l\left(\psi(y)wy^{-1}\right) = \sum_{i=1}^{k} l_{i}\left(\widehat{\psi}(v_{i})\psi(y_{i})\cdot w_{i}u_{i}w'_{i} \cdot y^{-1}_{i}v^{-1}_{i}\right) \geq \sum_{i=1}^{k}l(\psi(y_{i})w_{i}) + l_{i}(u_{i}) + l\left(w'_{i}y^{-1}_{i}\right).
\end{equation}
We need the following claim:\\
\textbf{Claim:}
\[ l\left(w_{i}\right) + l\left(w'_{i}\right) \leq l\left(\psi(y_{i})w_{i}\right) + l\left(w'_{i}y^{-1}_{i}\right).
\]
\textbf{Proof of Claim:} \\}
\comm{
Then
\begin{align*}
    l(\psi(y_{i})w_{i}) + l\left(w'_{i}y^{-1}_{i}\right) &= l(\rho_{i}(x)y_{i}) + l\left(y^{-1}_{i}\right) \\
    &= l(\rho_{i}(x)) + l(y_{i}) + l\left(y^{-1}_{i}\right) \\
    &= l(\rho_{i}(x)) + l(\psi(y_{i})) + l\left(y^{-1}_{i}\right) \\
    &= l(w_{i}).
\end{align*}}
\textbf{Case 2}: $\rho_{i}(w) = w_{i}u_{i}w'_{i}$. Let $w = w_{i}\tilde{w}w'_{i}$, and suppose $y = \tilde{y}y_{i}$, where $\psi(y_{i}) = w^{-1}_{i}$. By assumption $\psi\left(w'_{i}\right)w_{i} \in \mathsf{Geo}_{i}$, and so 
\begin{align*}
    \psi\left(w'_{i}\right)w_{i} \in \mathsf{Geo}_{i} &\Rightarrow \psi\left(w'_{i}\right)\psi\left(y^{-1}_{i}\right) \in \mathsf{Geo}_{i} \\
    &\Rightarrow \psi\left(w'_{i}y^{-1}_{i}\right) \in \mathsf{Geo}_{i} \\
    &\Rightarrow w'_{i}y^{-1}_{i} \in \mathsf{Geo}_{i}.
\end{align*}
\comm{
Then
\begin{align*}
    l(\psi(y_{i})w_{i}) + l\left(w'_{i}y^{-1}_{i}\right) &= l(\psi(y_{i})w_{i}) + l\left(\psi\left(w'_{i}\right)\psi\left(y^{-1}_{i}\right)\right) \\
    &\geq \left(\psi\left(w'_{i}\right)\psi\left(y^{-1}_{i}\right)\psi\left(y_{i}\right)w_{i}\right) \\
    &= l\left(\psi\left(w'_{i}\right)w_{i}\right) \\
    &= l\left(\psi\left(w'_{i}\right)\right) + l(w_{i}) \\
    &= l\left(w'_{i}\right) + l(w_{i}).
\end{align*}}
Then
\begin{equation}\label{eqn:contradiction graph product}
    \psi(y)wy^{-1} = \psi(\tilde{y})\psi(y_{i})w_{i}\tilde{w}w'_{i}y^{-1}_{i} \tilde{y}^{-1} =_{G_{\Gamma}} \psi(\tilde{y})\cdot \tilde{w}w'_{i}y^{-1}_{i}\cdot \tilde{y}^{-1}.
\end{equation}
Since $\tilde{w}w'_{i}$ and $w'_{i}y^{-1}_{i}$ are both geodesic in $X_{i}$, $\tilde{w}w'_{i}y^{-1}_{i} \in \mathsf{Geo}_{i}$. Moreover, $l(\tilde{w}w'_{i}y^{-1}_{i}) = l(w)$ and $l(\tilde{y}) < l(y)$, and so \cref{eqn:contradiction graph product} contradicts our assumption of minimality in our choice of $y$. 
\comm{

, and so the left hand side of the inequality cannot equal zero. Therefore by \cref{eqn:2} and the Claim above, we have that
\begin{align*}
    l\left(\psi(y)wy^{-1}\right) &\geq \sum_{i=1}^{k}\left(l\left(w_{i}\right) + l_{i}\left(u_{i}\right) + l\left(w'_{i}\right)\right)  \\
    &= \sum_{i=1}^{k}l_{i}(\rho_{i}(w)) \\
    &= l(w).
\end{align*}
But this contradicts our assumption that $l(x) < l(w) < l\left(\psi(y)wy^{-1}\right)$, and so \\$w \in \mathsf{ConjGeo}_{\psi}(G_{\Gamma}, X)$, which completes the proof.}
\end{proof}

\begin{thm}\label{thm:same twisted conj geo cyc geo}
Let $\psi \in \mathrm{Aut}(G_{\Gamma})$ be length-preserving and fix the vertex groups of $G_{\Gamma}$. Suppose $\mathsf{CycGeo}_{\psi}(G_{i}, X_{i}) = \mathsf{ConjGeo}_{\psi}(G_{i}, X_{i})$ for each $i$. Then 
\[ \mathsf{CycGeo}_{\psi}(G_{\Gamma}, X) = \mathsf{ConjGeo}_{\psi}(G_{\Gamma}, X).
\]
\end{thm}

\begin{proof}
First note $\mathsf{ConjGeo}_{\psi}(G_{\Gamma},X) \subseteq \mathsf{CycGeo}_{\psi}(G_{\Gamma}, X)$ by \cref{lemma:subset twisted conj languages}. Now suppose $w \in \mathsf{CycGeo}_{\psi}(G_{\Gamma}, X)$. Since $w$ is geodesic, then by \cref{lemma: geo to funny geo} we have $\rho_{i}(w) = u_{0}\$u_{1}\$\dots \$ u_{n}$ for each $i$, where each $u_{j} \in \mathsf{Geo}_{i}$ ($0 \leq j \leq n$). By \cref{lemma: twisted cycles hat}, there exists a $\psi$-cyclic permutation $w' \in X^{\ast}$ of $w$ such that $\rho_{i}\left(w'\right) = \psi(u_{n})u_{0}\$u_{1}\dots u_{n-1}\$ $. Since $w' \in \mathsf{Geo}(G_{\Gamma},X)$ by assumption, we must have that $\psi(u_{n})u_{0}, u_{1}, \dots, u_{n-1} \in \mathsf{Geo}_{i}$ by \cref{lemma: geo to funny geo}. 
\par 
Now suppose $n=0$, i.e. $\rho_{i}(w) = u_{0} \in \mathsf{Geo}_{i}$. For every $\psi$-cyclic permutation $u'$ of $u_{0}$, there exists a $\psi$-cyclic permutation $w'$ of $w$ such that $\rho_{i}\left(w'\right) = u' \in \mathsf{Geo}_{i}$ (by \cref{lemma: twisted cycles hat}). Therefore $u_{0} \in \mathsf{CycGeo}(G_{i}, X_{i})$. By assumption this means $u_{0} \in \mathsf{ConjGeo}(G_{i}, X_{i})$. Hence for all $n \geq 0$, we can apply \cref{lemma:conjgeo options} to conclude $w \in \mathsf{ConjGeo}_{\psi}(G_{\Gamma},X)$ as required. 
\end{proof}

\begin{proof}[Proof of \cref{cor:graphprod geo regular}]
By assumption, $\mathsf{Geo}(G_{\Gamma}, X)$ is regular \cite{LOEFFLER2002}, and so the result follows by \cref{thm:same twisted conj geo cyc geo} and \cref{thm:conjgeo regular extension group}. 
\comm{
Since $\mathsf{Geo}(G_{i}, X_{i})$ is regular, $\mathsf{CycGeo}_{\phi^{l}}(G_{i}, X_{i})$ is regular for each vertex group, by a similar argument as Proposition \ref{prop:cycgeo}. By assumption, this implies $\mathsf{ConjGeo}_{\phi^{l}}(G_{i}, X_{i})$ is regular for each vertex group, and therefore $\mathsf{ConjGeo}_{\phi^{l}}(G_{\Gamma}, X)$ is regular. The result then follows by a similar proof as \Cref{thm:ConjGeo Regular}. }
\end{proof}

\end{appendices}
\comm{
\section{Notation (draft only)}
$G_{\Gamma}$ := graph product \\
$G_{k}$ := vertex groups of graph product \\
$G_{\phi}$ := finite index group extension \\
$f_{L}(z)$ := strict growth series \\
$l(w)$ := word length \\
$|g|$ := length of a group element \\
$|g|_{c}$ := length up to conjugacy \\
$\text{Rd}(V)$ := reduced word from $V$ \\
$\|V\|$ := length of reduction of $V$ \\
$|g|_{\psi}$ := length up to twisted conjugacy
}

\comm{
\subsection{Further examples}
\begin{itemize}
    \item Paragraph: F2 case, summary of proof
\end{itemize}
\begin{defn}
A graph is \emph{asymmetric} or \emph{rigid} if there are no non-trivial graph automorphisms. 
\end{defn}

\begin{prop}\label{prop:inversion not CF}
Let $\RAAG$ be defined on a graph $\Gamma$ where
\begin{enumerate}
    \item $|\Gamma| \geq 2$, and 
    \item There exists a non-edge between at least 2 vertices in $\Gamma$, i.e. there exists two generators $a, b \in \RAAG$ such that $[a,b] \neq 1$.
\end{enumerate}
Let $\phi \in \mathrm{Aut}(\RAAG)$ be a composition of inversions. Then $\phi-\mathsf{ConjSL}(\RAAG)$ is not context-free.
\end{prop}
Note that if $\phi$ is trivial and $|\Gamma| = 2$, then $A_{\Gamma} \cong F_{2}$ and the result follows by \textbf{CITE F2 CASE}. 

\begin{proof}
Assume $a,b$ are two generators in $\RAAG$ such that $[a,b] \neq 1$. The proof of this Proposition is identical to the proof for $F_{2}$ once we verify that the parameters on the set $I$ are the same. For notation, define a map
\begin{align*}
    \widehat{\phi}: \{1,-1\} &\rightarrow \{1,-1\} \\
    x & \mapsto
    \begin{cases}
    x & \phi(x) = x \\
    -x & \phi(x) = x^{-1}
    \end{cases}
\end{align*}
Consider a word of the form
\[ w = a^{p}b^{l}a^{q}b^{j} \in I
\]
Here we assume $p,l,q,j > 0$. For $w \in \phi-\mathsf{ConjSL}$, we must check that no possible $\phi$-shifts form a shorter word. Since $\phi$ consists of inversions, letters will always be unable to shuffle after shifts. We see that the only possible words which could be shorter than $w$ are
\[ \phi(a^{q})\phi(b^{j})a^{p}b^{j}
\]
and 
\[ a^{q}b^{j}\phi(a^{p})\phi(b^{l})
\]
In the first case, this word is bigger than $w$ if either
\begin{itemize}
    \item $p > \widehat{\phi}(q)$, or
    \item $p=\widehat{\phi}(q)$, $l \leq \widehat{\phi}(j)$
\end{itemize}
Now if $\phi(q) = q^{-1}$, then the first point will always be true since $p$ is positive, and so we can rewrite this as $p > q$. Also the second case can only be true if $\phi$ fixes $q$ and $j$, and so overall these two conditions become:
\begin{itemize}
    \item $p > q$ or
    \item $p = q$, $l \leq j$
\end{itemize}
For the second case, this word is bigger than $w$ if either of the conditions above hold. \\
This matches the conditions required to use the proof for $F_{2}$. 

\end{proof}

\begin{cor}
Let $\Gamma$ be an asymmetric graph with at least 2 vertices, and let $\RAAG$ be the RAAG defined by $\Gamma$. Let $\psi \in \mathrm{Aut}(\RAAG)$ be length preserving. Then $\psi-\mathsf{ConjSL}(\RAAG)$ is not context-free, and hence
\[ \mathsf{ConjSL}(A_{\phi}) \cong \RAAG \rtimes_{\phi} \mathbb{Z}_{2}
\] 
is not context-free. 
\end{cor}

\begin{proof}
By Proposition \ref{prop:combo len p} and the assumption that $\Gamma$ has no non-trivial graph automorphisms, we can assume $\psi$ is a combination of inversions only (so has order 2). \\
We can also assume there exists at least 2 vertices in $\Gamma$ not connected by an edge, since otherwise $\Gamma \cong K_{n}$ which is not asymmetric. The result then follows by Proposition \ref{prop:inversion not CF}. 
\end{proof}
We can now show a more general result that covers a surprisingly large number of cases.

\begin{thm}(Erdos, Renyi)\\
Almost all graphs are asymmetric.
\end{thm}

\begin{cor}
For almost all RAAGs $\RAAG$, there exists a length preserving automorphism $\phi \in \mathrm{Aut}(\RAAG)$ such that $\phi-\mathsf{ConjSL}(\RAAG)$ is not context-free, and hence the extension $\mathsf{ConjSL}(\G)$ is not context-free. 
\end{cor}
This in a sense answers in the affirmative cases that for any RAAG, there exists an automorphisms $\phi$ such that $\mathsf{ConjSL}(A_{\phi})$ is not context-free. We now turn our attention to the bigger question of whether this result is true for \textbf{any} automorphism. \\

\begin{prop}\label{prop: order 2 cases}
Let $\phi \in \mathrm{Aut}(\RAAG)$ be an order 2 graph automorphism. Suppose $\Gamma$ is labelled in such a way that one of the following holds: 
\begin{enumerate}
    \item There exists two vertices $x, y \in \Gamma$ such that $\phi: x \mapsto y, \; y \mapsto x$, and $x$ and $y$ are not connected by an edge.
    \item There exists two vertices $x,y \in \Gamma$ such that $\phi$ fixes both $x$ and $y$, and $x$ and $y$ are not connected by an edge.
    \item There exists vertices $x,y \in \Gamma$ such that $x<y$, $x$ and $y$ are not connected by an edge, and $\phi(x) > x$ and $\phi(y) > y$ (with respect to shortlex). 
    \item There exists vertices $x,y,z \in \Gamma$ such that 
    \[ \phi: x \mapsto x, \; y \mapsto z, \; z \mapsto y
    \]
    and $[x,y] \neq 1, [x,z] \neq 1$ in $\RAAG$, where ordering is $x<y<z$. 
\end{enumerate}
Then $\phi-\mathsf{ConjSL}(\RAAG)$ is not context-free. 
\end{prop}

\begin{proof} 
1) Take $I = \phi-\mathsf{ConjSL} \cap L$ where $L = x^{+}y^{+}x^{+}$. The proof then follows similar to Proposition \ref{prop:conjSL not CF}.\\
2) Since $x,y$ are fixed and do not commute, we can simply take $L = x^{+}y^{+}x^{+}y^{+}$ and apply the $F_{2}$ case, since $x,y$ and their $\phi$-images generate an $F_{2}$ subgroup. \\
3) Let $L$ be words of the form 
\[ x^{p}y^{l}x^{q}y^{j}
\]
One $\phi$-shift which could make the word shorter is
\[ x^{q}y^{j}\phi^{-1}(x^{p})\phi^{-1}(y^{l})
\]
and so we always require that $p \geq q, l \leq j$. This is precisely the conditions on $I$ for the $F_{2}$ case. We now want to check that there are no other conditions required. \\
The other possible words which could be shorter are words which start with a $\phi$-image, for example
\[ \phi(y^{j})x^{p}y^{l}x^{q} \quad \text{or} \quad \phi(x^{q})\phi(y^{j})x^{p}y^{l}
\]
There could be shuffling, however because of the assumptions that $\phi(x) > x$ and $\phi(y) > y$, these words will always be bigger. The rest of the proof then follows the $F_{2}$ case.\\
4) Let $L$ be words of the form
\[ x^{p}y^{l}x^{q}y^{j}
\]
Note by assumption that any words which are a $\phi$-shift also do not contain any shuffles. Therefore the only shifts which could possibly be smaller are of the form
\[ x^{q}y^{j}\phi(x^{p})\phi(y^{l}) = x^{q}y^{j}x^{p}z^{l}
\]
and 
\[ \phi(x^{q}\phi(y^{j})x^{p}y^{l} = x^{q}z^{j}x^{p}y^{l}
\]
For both of these words to be bigger, we need $p \geq q$ and $l \geq j$. This is precisely the conditions we need to apply the $F_{2}$ case.

\end{proof}

\begin{prop}
Let
\[ A_{\Gamma} = \mathbb{Z} \times F_{3}
\]
Then $\phi-\mathsf{ConjSL}(A_{\Gamma})$ is not context-free for all length preserving $\phi$.
\end{prop}

\begin{proof}
Label graph as follows:
\[\begin{tikzcd}
	& a \\
	& d \\
	c && b
	\arrow[no head, from=2-2, to=1-2]
	\arrow[no head, from=2-2, to=3-1]
	\arrow[no head, from=2-2, to=3-3]
\end{tikzcd}\]
Inversions: see earlier work.\\
Graph automorphism: any $\phi$ must fix the central vertex $d \in \mathbb{Z}$. Hence
\[ Aut(A_{\Gamma}) \cong D_{3} \cong S_{3}
\]
Each of the order 2 elements of $S_{3}$ fall into Case 1 of \cref{prop: order 2 cases}. So we only need to consider the order 3 elements.\\
$(a,b,c)$: Consider a word of the form 
\[ a^{p}b^{q}a^{r}
\]
in $\phi-\mathsf{ConjSL}$. A possible twisted cycle is $a^{r}c^{p}a^{q}$, so $r \leq p$. One can check that the only other possible twisted cycle which could be smaller is $a^{q}c^{r}b^{p}$, and so $q \leq p$. This is precisely the same set up as in \cref{prop:conjSL not CF}, and proof follows similarly. \\
The proof is almost identical for $(a c b)$. This covers all possible length preserving automorphisms. 
\end{proof}

We can also find stronger result when we alter the ordering:
\begin{prop}
For 
\[ A_{\phi} = \RAAG \rtimes_{\phi} \mathbb{Z}_{2}
\]
where $\phi$ is a graph automorphism, $\mathsf{ConjSL}(A_{\phi})$ is not context-free for \textbf{some} ordering.
\end{prop}

\begin{proof}
Choose any two vertices $x, y \in \Gamma$ such that $[x,y] \neq 1$ (at least one non-edge exists, otherwise $\Gamma = K_{n}$). Define an ordering which contains the following:
\[ x < \phi(x) < y < \phi(y)
\]
We now lie in Case 3 of \cref{prop: order 2 cases}. 
\end{proof}
}

\bibliography{conj_lang}
\bibliographystyle{plain}
\uppercase{\footnotesize{Department of Mathematics, University of Manchester M13 9PL, UK and the Heilbronn Institute for Mathematical Research, Bristol, UK}}
\par 
\textit{Email address:} \texttt{gemma.crowe@manchester.ac.uk}

\end{document}